\documentclass{article}
\usepackage{fancyhdr}

\usepackage[british]{babel}
\usepackage[letterpaper,top=2cm,bottom=2cm,left=3cm,right=3cm,marginparwidth=1.75cm]{geometry}
\usepackage{amsmath, amsbsy, amssymb, amsthm, stmaryrd, bbm}
\usepackage{graphicx}
\usepackage{xspace}
\usepackage[T1]{fontenc}
\usepackage[utf8]{inputenc}
\usepackage{float}
\usepackage{caption}
\usepackage{quiver}
\usepackage{yhmath}
\usepackage[nottoc,numbib]{}
\usepackage{scalerel}
\usepackage{footmisc}
\usepackage[useregional]{datetime2}
\usepackage[colorlinks=true, allcolors=blue]{hyperref}
\usepackage[maxbibnames=99, style=alphabetic, backend=biber, url=false, doi=false, isbn=false, date=year, hyperref, backref, backrefstyle=none]{biblatex}
\AtEveryBibitem{\clearlist{language}}
\DefineBibliographyStrings{british}{
  backrefpage={$\uparrow$\!\!},
  backrefpages={$\uparrow$\!\!}
}
\usepackage{filecontents}
\DTMlangsetup[en-GB]{showdayofmonth=false}
\usepackage{tikz}

\usepackage{mathtools}

\newtheorem{proposition}{Proposition}[section]
\newtheorem{theorem}[proposition]{Theorem}
\newtheorem{lemma}[proposition]{Lemma}
\newtheorem{example}[proposition]{Example}
\newtheorem{definition}[proposition]{Definition}
\newtheorem{definition-lemma}[proposition]{Definition-Lemma}
\theoremstyle{definition}
\newtheorem{remark}[proposition]{Remark}

\newtheorem*{question*}{Questions}
\newtheorem*{lemma*}{Lemma}

\numberwithin{equation}{proposition}

\makeatletter
\newcommand{\etale}{\'etal\@ifstar{\'e}{e\xspace}}
\makeatother
\newcommand\blfootnote[1]{%
  \begingroup
  \renewcommand\thefootnote{}\footnote{#1}%
  \addtocounter{footnote}{-1}%
  \endgroup
}
\newcommand{\Addresses}{{
  \bigskip
\noindent\textsc{Department of Mathematics, Massachusetts Institute of Technology, USA,}\par\nopagebreak
\noindent Email: \texttt{tong.g.h.zhou@gmail.com}
  }}

\newcommand{\isoto}{\xrightarrow{\raisebox{-0.5ex}[0ex][0ex]{$\sim$}}}
\newcommand{\isoot}{\xleftarrow{\raisebox{-0.5ex}[0ex][0ex]{$\sim$}}}

\newcommand{\ZZ}{\mathbf{Z}}
\newcommand{\QQ}{\mathbf{Q}}

\newcommand{\FF}{\mathbf{F}}

\newcommand{\CF}{\mathcal{F}}

\newcommand{\Gm}{\mathbf{G}_m}
\newcommand{\X}{\times}
\newcommand{\OX}{\otimes}

\newcommand{\CCC}{\mathcal{C}}
\newcommand{\CH}{\mathcal{H}}
\newcommand{\CL}{\mathcal{L}}

\newcommand{\CK}{\mathcal{K}}
\newcommand{\AAA}{\mathbf{A}}
\newcommand{\DD}{\mathbb{D}}

\newcommand{\CA}{\mathcal{A}}
\newcommand{\CO}{\mathcal{O}}

\newcommand{\RHom}{\mathrm{RHom}}
\newcommand{\RHOM}{\mathrm{R\underline{Hom}}}
\newcommand{\Hom}{\mathrm{Hom}}

\newcommand{\tE}{\widetilde{E}}

\newcommand{\fib}{\mathrm{fib}}

\newcommand{\pr}{\mathrm{pr}}
\newcommand{\id}{\mathrm{id}}

\newcommand{\Mod}{\mathrm{Mod}}

\newcommand{\Spec}{\mathrm{Spec}}

\newcommand{\Vect}{\mathrm{Vect}_L}
\newcommand{\PrL}{\mathrm{Pr}^L_{\mathrm{st}}}
\newcommand{\CAlg}{\mathrm{CAlg}}
\newcommand{\simeqd}{\mathrel{\rotatebox[origin=c]{90}{$\backsimeq$}}}
\newcommand{\1}{\mathbbm{1}}
\newcommand{\Td}{\langle d\rangle}
\newcommand{\Tmd}{\langle -d\rangle}
\newcommand{\Tmdt}{\langle -\widetilde{d}\rangle}
\newcommand{\tF}{\widetilde{F}}
\newcommand{\Tdt}{\langle \widetilde{d}\rangle}
\newcommand{\CE}{\mathcal{E}}
\newcommand{\Ga}{\mathbf{G}_a}
\newcommand{\simd}{\mathrel{\rotatebox[origin=c]{90}{$\backsim$}}}
\newcommand{\tp}{\widetilde{p}}
\newcommand{\td}{\widetilde{d}}

\title{A motivic derived Fourier transform}
\author{Tong Zhou}
\date{}
\begin{document}
\maketitle
\begin{abstract}
\blfootnote{August 2026}
We construct a theory of Fourier transform for rational étale motives on derived vector bundles over derived Artin stacks over a field of positive characteristic, defined as the integral transform with kernel the Artin–Schreier motive. The basic properties are established: involutivity, base change, functoriality, duality, monodromicity, constructibility, and compatibilities with various natural transformations.
\end{abstract}
\renewcommand{\baselinestretch}{1.0}\normalsize
\setcounter{tocdepth}{2}
\tableofcontents
\renewcommand{\baselinestretch}{1.0}\normalsize
\vspace{10pt}
\section{Introduction}
Let $k$ be a field of characteristic $p>0$, and $L$ a finite extension of $\QQ$ containing a primitive $p$-th root of unity. We fix a primitive $p$-th root of unity $\zeta\in L$ throughout. For a derived Artin stack $X$ locally of finite type over $k$, denote by $D(X)$ the stable category of \'etale $L$-motives on $X$. In this work, we construct a theory of Fourier transform on derived vector bundles in this context.\\

The Fourier transform is defined as the integral transform with kernel the Artin–Schreier motive, which, by definition, is a non-trivial isotypic part of $f_*f^*\1_{\AAA^1}$ under the canonical $\mathrm{Gal}(f)(\simeq\ZZ/p)$-action, where $f: \AAA^1\rightarrow\AAA^1$ is the Artin–Schreier covering. Starting from this, we establish all the basic properties of the Fourier transform. The results are used in \cite{FYZ3}. In the case of vector bundles on schemes, a part of this theory has also been studied in \cite[\nopp §3.1, §3.2]{cass_vdhove_scholbach}.\\

The structure of this article is as follows: after preliminary discussion on \'etale motives and higher algebra in §\ref{sec_prelim}, we define and study the Artin–Schreier motives in detail in §\ref{sec_AS}. We study their behaviour under the tensor product and internal-Hom between each other, duality, scaling on $\AAA^1$, pushing forward to a point, and prove the additivity property. These are then used in §\ref{sec_fourier_on_vect}, where we construct the theory of Fourier transform on classical vector bundles over schemes. On the one hand, this case serves as the base for the case on derived vector bundles, on the other hand, most proofs here work \textit{verbatim} for the latter case, with the construction of involutivity as a notable exception. In §\ref{sec_F_dVect}, after a preliminary discussion on derived vector bundles, we construct the theory of Fourier transform in this context. The construction of involutivity is non-trivial and due to \cite[\nopp §A.3]{FYZ}. We add in some details and give a slightly different argument in the last gluing step (see Theorem \ref{thm_involutivity}), avoiding the use of $t$-structures on $D(X)$. Finally, we establish all basic properties of the Fourier transform in §\ref{subsec_F_properties_d}. We emphasise that, once the techniques involved in the construction of involutivity are properly understood, most of the properties follow easily. The exceptions are the compatibilities with proper base change and the Gysin map, which are non-trivial; nevertheless, the original proofs in \cite[\nopp §A.4, §6.4]{FYZ} apply in our context.\\

For the convenience of the reader, here we summarise the main results on the motivic derived Fourier transform. We refer to Conventions for any unspecified terminology. Let $E$ be a derived vector bundle of rank $d$ on a derived Artin stack $X$ locally of finite type over $k$, and $E'$ be its dual.

\begin{definition}
    The \underline{Fourier transform} $F$ is the functor $D(E)\rightarrow D(E'), A\mapsto p'_!(p^*A\OX m^*\CL)$, where $\CL$ is the Artin–Schreier motive, and $p'$, $p$, and $m$ are as in the following diagram:
\[\begin{tikzcd}
	& {E\times_XE'} && {\AAA^1} \\
	E && {E'}
	\arrow["{m\,\,\mathrm{(pairing)}}", from=1-2, to=1-4]
	\arrow["p"', from=1-2, to=2-1]
	\arrow["{p'}", from=1-2, to=2-3]
\end{tikzcd}\]
\end{definition}

\begin{theorem}
    (1) $\mathrm{(Involutivity)}$ There is a canonical pair $(\eta_E,\eta_{E'})$ of natural isomorphisms $\eta_E: F'\circ F\isoto [-1]^*\Tmd$ and $\eta_{E'}: F\circ F'\isoto [-1]^*\Tmd$, such that the following two diagrams commute: 
\[\begin{tikzcd}
	F & {F\circ F'\circ F[-1]^*\Td} & F & {F'} & {F'\circ F\circ F'[-1]^*\Td} & {F'}
	\arrow["{(-1)^d\cdot\id}", curve={height=24pt}, from=1-1, to=1-3]
	\arrow["{F\circ \eta_{E}}"', from=1-2, to=1-1]
	\arrow["\sim", from=1-2, to=1-1]
	\arrow["{\eta_{E'}\circ F}", from=1-2, to=1-3]
	\arrow["\sim"', from=1-2, to=1-3]
	\arrow["{(-1)^d\cdot\id}", curve={height=24pt}, from=1-4, to=1-6]
	\arrow["{F'\circ\eta_{E'}}"', from=1-5, to=1-4]
	\arrow["\sim", from=1-5, to=1-4]
	\arrow["\sim"', from=1-5, to=1-6]
	\arrow["{\eta_{E}\circ F'}", from=1-5, to=1-6]
\end{tikzcd}\]
    (2) $\mathrm{(Right\,\,adjoint)}$ There is a natural isomorphism $F\simeq p'_*(p^{!}(-)\OX m^*\CL)\Tmd$. If $p$ is smooth (which is equivalent to $E$ having tor-amplitude in $[0,+\infty)$), then there is a further natural isomorphism with $p'_*(p^*(-)\OX m^*\CL)$.\footnote{The isomorphism $F\simeq p'_*(p^*(-)\OX m^*\CL)$ is sometimes called the “miracle” of the Fourier transform (\cite[108]{katz_travaux_laumon}).}\\
    (3) $\mathrm{(Verdier\,\,duality)}$ There is a natural isomorphism $\DD_{E'/X}F\simeq (F\DD_{E/X})[-1]^*\Td$.\\
    (4) $\mathrm{(Convolution)}$ Let $(-)\ast(-): D(E)\X D(E)\rightarrow D(E)$ be the convolution with respect to the additive structure of $E$, similarly for $E'$. For $A,B\in D(E)$, we have natural isomorphisms:
    $$F(A\ast B)\simeq FA\OX FB\text{ and }F(A\OX B)\simeq (FA\ast FB)\Td.$$
    (5) $\mathrm{(Plancherel)}$ For $A,B\in D(E)$, we have a natural isomorphism
    $$\pi_{E'!}(FA\OX FB)\simeq\pi_{E!}(A\OX[-1]^*B)\Tmd$$
    where $\pi_E$ and $\pi_{E'}$ are the projections to $X$.\\
    (6) $\mathrm{(Monodromicity)}$ The Fourier transform restricts to an equivalence on monodromic objects (Definition \ref{def_mon_obj}): $D_{\mathrm{mon}}(E)\isoto D_{\mathrm{mon}}(E')$.\\
    (7) $\mathrm{(Constructibility)}$ The Fourier transform restricts to an equivalence on constructible objects: $D_{\mathrm{c}}(E)\isoto D_{\mathrm{c}}(E')$.\\
    (8) $\mathrm{(Compatibility \,\,with \,\,the \,\,homogeneous \,\,Fourier \,\,transform)}$ There is a commutative diagram
\[\begin{tikzcd}
	{D(E)} & {D(E')} \\
	{D(E/\Gm)} & {D(E'/\Gm)}
	\arrow["F", from=1-1, to=1-2]
	\arrow["{\rho_1^*}", from=2-1, to=1-1]
	\arrow["{F_1}"', from=2-1, to=2-2]
	\arrow["{\rho'^*_1}"', from=2-2, to=1-2]
\end{tikzcd}\]
where $\rho_1$ (resp. $\rho_1'$) is the quotient map $E\rightarrow E/\Gm$ (resp. $E'\rightarrow E'/\Gm$), and $F_1$ is the homogeneous Fourier transform (Definition \ref{def_F_1}).\\
    (9) $\mathrm{(Base\,\,change)}$ Let $f: \widetilde{X}\rightarrow X$ be a map locally of finite type of derived Artin stacks, let $\tE= E\X_X\widetilde{X}$ be the pullback. There are natural isomorphisms\footnote{Throughout this article, in a base change situation, we abuse notations and use $f$ to denote the maps from an object over $\widetilde{X}$ to the corresponding object over $X$ unless otherwise noted.}:
$$ (1)\,\, Ff_!\simeq f_!\tF,\,\,\,\, (2)\,\, f^*F\simeq \tF f^*,\,\,\,\, (3)\,\, f^!F\simeq \tF f^!,\,\,\,\, (4)\,\, Ff_*\simeq f_*\tF.$$
    (10) $\mathrm{(Functoriality)}$ Let $\alpha: \tE\rightarrow E$ be a map of derived vector bundles over $X$, and $\alpha'$ be the dual map. There are natural isomorphisms:
    $$ (1)\,\, F\alpha_!\simeq\alpha'^*\tF,\,\,\,\, (2)\,\, \alpha'_!F\Td\simeq \tF\alpha^*\Tdt, \,\,\,\, (3)\,\, F\alpha_*\Td\simeq\alpha'^!\tF\Tdt,\,\,\,\, (4)\,\, \alpha'_*F\simeq\tF\alpha^!.$$
    (11) $\mathrm{(Compatibility \,\,with \,\,proper \,\,base \,\,change)}$ Given a Cartesian square of derived vector bundles over $X$ and its dual:
\[\begin{tikzcd}[row sep=0.8cm, column sep=0.8cm]
	& D &&& {D'} & \\
	A && C & {A'} && {C'} \\
	& B &&& {B'}
	\arrow["c"', from=1-2, to=2-1]
	\arrow["d", from=1-2, to=2-3]
	\arrow["\square"{marking, allow upside down}, draw=none, from=2-1, to=2-3]
	\arrow["a"', from=2-1, to=3-2]
	\arrow["b", from=2-3, to=3-2]
	\arrow["{c'}", from=2-4, to=1-5]
	\arrow["{d'}"', from=2-6, to=1-5]
	\arrow["\square"{marking, allow upside down}, draw=none, from=2-6, to=2-4]
	\arrow["{a'}", from=3-5, to=2-4]
	\arrow["{b'}"', from=3-5, to=2-6]
\end{tikzcd}\]
Assume $a$ and $b$ admit presentations (see §\ref{subsec_dVect}), then we have a commutative diagram
\[\begin{tikzcd}
	{F_Cb^*a_!} & {F_Cd_!c^*} \\
	{b'_!a'^*F_A\langle d_B-d_C\rangle} & {d'^*c'_!F_A\langle d_A-d_D\rangle}
	\arrow["\simeq"{description}, draw=none, from=1-1, to=1-2]
	\arrow["\simeqd"{description}, draw=none, from=1-1, to=2-1]
	\arrow["\simeqd"{description}, draw=none, from=1-2, to=2-2]
	\arrow["\simeq"{description}, draw=none, from=2-1, to=2-2]
\end{tikzcd}\]
where the vertical isomorphisms are from Functoriality, and the horizontal ones are from proper base change.\\
    (12) $\mathrm{(Compatibility \,\,with \,\,the \,\,Gysin \,\,map)}$ Let $\alpha: \tE\rightarrow E$ be a quasi-smooth map of derived vector bundles over $X$. Assume $\alpha$ admits a presentation. Then we have a commutative diagram
\[\begin{tikzcd}
	{\alpha'_!F\Td} & {\alpha'_*F\Td} \\
	{\tF\alpha^*\Tdt} & {\tF\alpha^!\Td}
	\arrow["{\mathrm{can}(\alpha')}", from=1-1, to=1-2]
	\arrow["\simeqd"{description}, draw=none, from=1-1, to=2-1]
	\arrow["\simeqd"{description}, draw=none, from=1-2, to=2-2]
	\arrow["{[\alpha]}"', from=2-1, to=2-2]
\end{tikzcd}\]
where $\mathrm{can}(\alpha'): \alpha'_!\rightarrow\alpha'_*$ is the forget-support map,  $[\alpha]: \alpha^*\rightarrow\alpha^!\langle -\dim\alpha\rangle$ is the Gysin map, and the vertical isomorphisms are from Functoriality.
\end{theorem}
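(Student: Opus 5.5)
The plan is to reduce everything to two inputs: the properties of the Artin–Schreier motive $\CL$ (tensor and internal-Hom relations, duality, scaling, the computation of $\pi_!\CL$ on $\AAA^1$, and additivity $\mathrm{add}^*\CL\simeq\CL\boxtimes\CL$), together with the six-functor formalism for étale motives on derived Artin stacks. The order is: classical vector bundles over schemes first, then derived vector bundles over derived Artin stacks by descent and presentations.

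\textbf{Step 1: the key vanishing lemma.} Pushing $\CL$ forward to a point gives $\pi_!\CL\simeq 0$ on $\AAA^1$. Combined with additivity, base change and the projection formula, this yields, for a vector bundle $E$ of rank $d$,
$$p'_!m^*\CL\simeq i_{0!}\1_X\Tmd$$
on $E'$, where $i_0$ is the zero section. Involutivity then follows by composing kernels. The kernel of $F'\circ F$ is the pushforward along $E\times E'\times E\to E\times E$ of $m^*\CL$ pulled back along the difference pairing. By the lemma, this is the pushforward of the constant motive on the antidiagonal, twisted by $\Tmd$, which gives $[-1]^*\Tmd$.

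\textbf{Step 2: the coherence of $(\eta_E,\eta_{E'})$.} The triangle identities with sign $(-1)^d$ need care. I would first reduce to $E$ trivial of rank $d$ via base change and local triviality. Then I reduce to $d=1$ using the Künneth/external product compatibility of $F$, under which the sign becomes a product of the rank-one signs. For $d=1$ the sign comes from the swap of the two $\AAA^1$ factors acting on $\langle -1\rangle$, which can be computed directly.

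\textbf{Step 3: the remaining properties for classical bundles.}
\begin{itemize}
\item Base change (9) follows formally from proper base change and smooth base change.
\item Functoriality (10)(1) is the projection formula applied to $\alpha\times\id$ together with $m\circ(\alpha\times\id)=m\circ(\id\times\alpha')$. The other three parts of (10) follow from (10)(1) by involutivity and adjunction.
\item The right adjoint description (2) follows from involutivity: $F$ is an equivalence whose inverse is $F'$ up to twist, and $F'$ has right adjoint $p'_*p^!$-type.
\item Duality (3) follows from (2) together with $\DD\CL\simeq\CL^{-1}\simeq[-1]^*\CL$.
\item Convolution (4) and Plancherel (5) follow from additivity and Step 1.
\item For monodromicity (6) and the comparison with the homogeneous transform (8), I would show that $F$ carries $\Gm$-equivariant objects to $\Gm$-equivariant ones by the scaling property of $\CL$. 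I would then identify $F_1$ through descent along $\rho_1$, and deduce (6) from (8) and involutivity.
\item Constructibility (7) reduces by base change to $X$ a point and to generic constructibility of $p'_!$.
\end{itemize}

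\textbf{Step 4: derived vector bundles.} Locally, write $E$ as a cone of a map of classical bundles $E_0\to E_1$, or more generally use a presentation, and glue using descent in $D(-)$. Most statements are identities between natural isomorphisms checked locally, so they glue once the local isomorphisms are canonical. For (11) and (12) I would follow \cite{FYZ}: reduce via presentations to maps of classical bundles, check compatibility with the Gysin map for a linear embedding explicitly, and use that $\mathrm{can}$ and Gysin maps are compatible with base change.

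\textbf{Main obstacle.} I expect the main obstacle to be the construction of $\eta_E$ on derived bundles. The kernel computation in Step 1 uses $i_0$, and $i_0$ is not a closed immersion when $E$ has negative tor-amplitude. The local isomorphisms must therefore be glued coherently, not merely objectwise. I would construct $\eta$ by descent from presentations, exhibiting the isomorphism as functorial in the $\infty$-category of presentations. That category should be contractible enough, for instance sifted, so that the colimit defines a canonical transformation. The triangle identities would then be checked only on the homotopy category of presentations.
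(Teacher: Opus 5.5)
Your classical-bundle steps (the kernel computation, the rank-one sign via the swap, and the formal derivations of (2)–(5), (9), (10)) are essentially the paper's. The derived case of (1), however, has a genuine gap at both the local and the global step, and everything else rests on it.

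\emph{Locally}, Step 4 never says how a presentation produces $\eta_E$. Involutivity on classical bundles is useless unless it can be transported to $E$, and your Step 3 obtains (10)(2)–(4) \emph{from} involutivity, which is circular here. The paper uses two ingredients absent from your plan:
\begin{itemize}
\item pullback along a smooth map of derived vector bundles is fully faithful (Lemma \ref{lem_bootstrap}, via \v{C}ech descent and $\AAA^1$-invariance), so $p^*i_!\colon D(E)\hookrightarrow D(E^0)$ is fully faithful;
\item Functoriality (2) for smooth maps and closed immersions is proved \emph{directly}, by a pointwise vanishing argument (Lemma \ref{lem_base_change_smooth}).
\end{itemize}
Together with the formal Functoriality (1), these show that $p^*i_!$ intertwines $F'F$ with $F'_0F_0$, so $\eta_{E^0}$ restricts to $E$.

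\emph{Globally}, your proposed colimit over a sifted $\infty$-category of presentations presupposes a coherent functor from presentations to involutivity isomorphisms. That coherence is exactly the difficulty, and nothing in your sketch constructs it. The missing idea is rigidity. An involutivity datum is equivalent to an isomorphism $\1_X\to\CK:=\iota_E^!F'\1_{E'}\Td$ (Remark \ref{rmk_inv_datum_basics}.2). Since $\RHom(\1_V,\1_V)$ is coconnective with $H^0=L$ for connected $V$ (Lemma \ref{lem_hom_of_unit_conn_comp}), $V\mapsto\Hom(\1_V,\CK_V)$ is an honest sheaf of vector spaces, locally a line. Hence two local constructions differ by a locally constant scalar. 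That scalar can be computed after base change to a geometric point, where both agree with the obvious datum of Example \ref{ex_involutivity_datum_examples}.2. Gluing then becomes a $1$-categorical problem with no higher coherences.

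Your Step 2 needs the same input. Natural transformations that agree after pullback to a cover need not agree. But once the two sides of the triangle are known to differ by a scalar, testing on $\1_E$ over a geometric point with $E=\AAA^1$ suffices.

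Two smaller points.
\begin{itemize}
\item In (6) and (8), the equivariance comes from invariance of the pairing $m$ under the action of weight $n$ on $E$ and $-n$ on $E'$. It does not come from a scaling property of $\CL$, which is not $\Gm$-equivariant (e.g.\ $[a]^*\CL\simeq\CL_a$ for $a\in\FF_p^\times$). This diagram chase works for every $n$ and gives (6) directly; the case $n=1$ alone does not suffice. For (8) it only yields $F\rho_1^*\simeq\rho_1'^*\pi'_!(\pi^*(-)\OX\mu^*q_{1!}\CL)$. You still need $q_{1!}\CL\simeq\beta_*\1[-1]$, whose nontrivial half is the vanishing of $\alpha^!q_{1!}\CL$ along $\alpha\colon B\Gm\hookrightarrow\AAA^1/\Gm$; the paper obtains this from the contraction principle, following Laumon.
\item In (7), constructibility cannot be tested on fibres over points of $X$. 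The actual issue is that $p'$ is not representable once $\CE$ has negative degrees. The paper circumvents this with the smooth surjection $q\colon E^{\ge0}\to E$ (whose dual $q'$ is a closed immersion) and $q'_*FA\simeq F_{\ge0}q^!A$.
\end{itemize}
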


\section*{Conventions}
The conventions are as in \cites{FYZ, FK, FYZ3}. For derived Artin stacks, a reference is \cite[\nopp §2.2]{HAG}. Unless otherwise specified, all our derived Artin stacks are locally of finite type over $k$. The symbols $\AAA^1$ and $\Gm$ (resp. $(-)\X(-)$) without a subscript denote the objects (resp. fibre product) over $k$. By a point (resp. geometric point) we mean a map from the spectrum of a field (resp. separably closed field). We use the six-functor formalism of \'etale $\QQ$-motives; on schemes two references are \cites{ayoub2014, cisinskideglise2016} (see \cite[\nopp §3.1]{FK} for a comparison of different notions in the literature; for definiteness, we use the model of \cite{ayoub2014}), on derived Artin stacks a reference is \cite[Appendix]{khan_virtual}.  For a map $f$ of derived Artin stacks, we have the adjunction $f^*\dashv f_*$; for $f$ locally of finite type, we have the adjunction $f_!\dashv f^!$, the forget-support map $\mathrm{can}(f): f_!\rightarrow f_*$ (for $f$ representable by derived
Deligne–Mumford stacks), and the Gysin map $[f]: f^*\rightarrow f^!\langle -\dim f\rangle$ (for $f$ quasi-smooth), see \cite[Theorem A.7, §3.1]{khan_virtual}. For $i\in \ZZ$, we use the notation $\langle i\rangle=(i)[2i]$ throughout. For a derived Artin stack $X$, $X_{\mathrm{cl}}$ denotes its classical truncation. We have a canonical equivalence $D(X)\simeq D(X_{\mathrm{cl}})$. We denote the unit object in $D(X)$ by $\1_X$, and often omit the subscript when the context is clear.\\\\
We will need the fact that $D(-)$ forms a sheaf on the category of derived Artin stacks with respect to the smooth topology. This follows from \cite[Proposition 3.2.5]{khan_lisse}, whose hypothesis is satisfied because of \cite[Proposition 2.2.3.2]{HAG}.\\\\
Recall the notion of constructible motives (see \cite[\nopp §3.3]{FK}, where they are called geometric motives): (1) For a derived Artin stack $X$, the full subcategory $D_{c}(X)\subseteq D(X)$ of constructible objects consists of $A\in D(X)$ such that $f^*A$ is constructible for every smooth surjection $f:S\rightarrow X$ from a derived scheme, and it suffices to test for any such $f$. (2) For a derived scheme $S$, $D_{c}(S)$ is the thick subcategory of $D(S)$ generated by objects of the form $g_\sharp\1\langle i\rangle$, for $g$ ranging over all smooth maps from a derived scheme and $i$ ranging over $\ZZ$. Here $g_\sharp$ is the left adjoint of $g^*$. (3) The functors $\otimes$ and $\mathrm{R\underline{Hom}}$ preserve $D_c$. For a map $f$ of finite type of derived Artin stacks, $D_c$ is preserved under $f^*$ and $f^!$; if $f$ is representable (by derived schemes), then $f_*$ and $f_!$ also preserve $D_c$. (4) On a scheme which is finite dimensional, quasi-compact and quasi-separated, this notion of constructibility coincides with the usual one defined by stratification, and also with being compact (\cite[Proposition 8.3]{ayoub2014}, \cite{ruimytubach}). (5) The subcategory of dualisable objects is denoted by $D_{\mathrm{lis}}(X)$, it forms a thick subcategory of $D(X)$.\\\\
For a perfect complex $\CE$ (resp. derived vector bundle $E$) over a derived Artin stack $X$, its associated derived vector bundle (resp. perfect complex) is denoted by $E$ (resp. $\CE$), and $\CE$ corresponds to the sections of $E$. By the tor-amplitude of $E$ we mean the tor-amplitude of $\CE$. We denote by $\pi_E$ and $\iota_E$ the projection $E\rightarrow X$ and zero-section $X\rightarrow E$ respectively. The superscript “$'$” is used to denote objects associated to the dual bundle. Unless otherwise noted, $d$ ($\widetilde{d}$, $d'$, \textit{etc.}) denotes the rank of $E$ ($\tE$, $E'$, \textit{etc.}) (which is usually called the virtual rank, it is a locally constant function on the underlying topological space of $X$), and similarly for the Fourier transform $F$.\\\\
By a category we always mean an $\infty$-category. All diagrams displayed are commutative (meaning that there exists a witnessing homotopy), unless there appears a (co-)simplicial object.

\section*{Acknowledgements}
I heartily thank Tony Feng for suggesting this topic, teaching me the definition of the Artin–Schreier motive, and for subsequent discussions. I also thank Denis-Charles Cisinski, Samuel Muñoz-Echániz, Alexander Petrov, Jakob Scholbach and especially Zhiwei Yun for helpful discussions and feedback.

\section{Preliminaries}\label{sec_prelim}
\subsection{\'Etale motives}
We refer to Conventions for basic notions concerning \'etale motives. This subsection records a few lemmas to be used later.
\begin{lemma}\label{lem_point_conservative}
    Let $X$ be a scheme locally of finite dimension, or a derived Artin stack locally of finite type over a field. Then, pulling back to points forms a conservative family for $D(X)$. Here a point of $X$ means a map to $X$ from the spectrum of a field.
\end{lemma}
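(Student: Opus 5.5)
We split into the scheme case and the stack case, and reduce the latter to the former.

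For a scheme $X$ locally of finite dimension, conservativity of pullback to points is a known property of étale motives with rational coefficients. We would invoke it from \cite{ayoub2014} (or \cite[Proposition 4.3.17]{cisinskideglise2016} in the Beilinson-motive language), but it is worth recalling the mechanism in case the hypotheses need adjusting. First, $D(-)$ is a Zariski sheaf, so we may assume $X$ quasi-compact and quasi-separated of finite Krull dimension. We then use noetherian induction on the closed subsets supporting a potential counterexample. The ingredients are:
\begin{itemize}
\item localisation sequences $j_!j^*\to\id\to i_*i^*$ for a closed immersion $i$ with open complement $j$;
\item continuity: for a point $x$ with local scheme $X_x=\lim U$ over open neighbourhoods $U\ni x$, and more generally for essentially affine limits, $D(X_x)$ is the colimit of the $D(U)$, and Hom groups between compact objects commute with this colimit.
\end{itemize}

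Here is the argument. Let $A\in D(X)$ have $x^*A=0$ for all points $x$, where $x=\Spec\kappa(x)$. Put $Z=\{x : A|_{X_x}\neq0\}$, meaning the set of points where $A$ does not vanish on the local scheme. Suppose $Z\neq\emptyset$ and take $x$ maximal for generization in its closure. Restricting to $X_x$ and using localisation with respect to the closed point, $A|_{X_x}$ is supported at the closed point. So $A|_{X_x}\simeq i_*i^*A|_{X_x}$, and $i^*A|_{X_x}=x^*A=0$ by hypothesis, which is a contradiction.

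The delicate point is whether "nonzero on $X_x$" detects nonvanishing for noncompact $A$. We handle this by testing against compact generators $g_\sharp\1\langle n\rangle$ and using continuity for Hom out of compacts. This continuity step is where finite dimensionality is needed, since it gives compact generation and finite cohomological dimension in the étale setting with $\QQ$-coefficients. We expect this to be the main technical obstacle, and would cite it rather than reprove it.

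For a derived Artin stack $X$ locally of finite type over a field, we first use $D(X)\simeq D(X_{\mathrm{cl}})$ to reduce to the classical case; this does not change points. Next, since $D(-)$ is a sheaf for the smooth topology (Conventions), we choose a smooth atlas $f\colon S=\coprod S_i\to X$ with each $S_i$ an affine scheme of finite type over $k$, hence finite dimensional. The functor $f^*$ is conservative because $D(X)$ is the limit over the Čech nerve and $f$ is surjective. Finally, a point $s\colon\Spec K\to S$ composes to a point $f\circ s$ of $X$, and $s^*f^*A=(f\circ s)^*A$. So if all pullbacks of $A$ to points of $X$ vanish, then all pullbacks of $f^*A$ to points of $S$ vanish, hence $f^*A=0$ by the scheme case, hence $A=0$.
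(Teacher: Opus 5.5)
Your proposal is correct and is essentially the paper's proof: the scheme case is quoted from \cite[Proposition 3.24]{ayoub2014}, and the stack case is reduced to it via a smooth atlas (with $f^*$ conservative by smooth descent) and $D(T)\simeq D(T_{\mathrm{cl}})$. Passing to $X_{\mathrm{cl}}$ before rather than after choosing the atlas makes no difference; your optional sketch of the scheme case is not needed, and as written it glosses over a local-to-global step (vanishing on every local scheme $X_y$ forces vanishing on the ambient open, used both for ``supported at the closed point'' and for concluding from $Z=\emptyset$), which requires continuity for arbitrary objects together with a hypercompleteness argument for the Zariski topos in finite Krull dimension.
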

\begin{proof}
     The scheme case follows from \cite[Proposition 3.24]{ayoub2014}. The case of derived Artin stacks reduces to the case of schemes by considering a smooth surjection from a derived scheme $T$ and using $D(T)\simeq D(T_{\mathrm{cl}})$. 
\end{proof}

\begin{remark}\label{rmk_conservativity}
     For the subcategory of constructible objects $D_c(X)$, in fact pulling back to geometric points is already conservative, this is a direct consequence of \cite[Propositions 3.24, 3.20, Lemme 3.4]{ayoub2014}.
\end{remark}

\begin{lemma}[Künneth formula]\label{lem_kunneth}
    Let $f: X'\rightarrow X$ and $g: Y'\rightarrow Y$ be maps locally of finite type of schemes (resp. derived Artin stacks) over a base scheme (resp. derived Artin stack) $S$. Consider the following diagram, where fibre products are taken in schemes (resp. derived Artin stacks):
\[\begin{tikzcd}
	& {X'\X_S Y'} & \\
	{X'} & {X\X_S Y} & {Y'} \\
	X && Y
	\arrow[from=1-2, to=2-1]
	\arrow["{f\X_S g}", from=1-2, to=2-2]
	\arrow[from=1-2, to=2-3]
	\arrow["\square"{description, pos=0.58}, draw=none, from=2-1, to=2-2]
	\arrow["f"', from=2-1, to=3-1]
	\arrow[from=2-2, to=3-1]
	\arrow[from=2-2, to=3-3]
	\arrow["\square"{description, pos=0.58}, draw=none, from=2-3, to=2-2]
	\arrow["g", from=2-3, to=3-3]
\end{tikzcd}\]
Then, for $A\in D(X')$ and $B\in D(Y')$, there is a natural isomorphism $(f\X_S g)_!(A\boxtimes B)\simeq f_!A\boxtimes g_!B$.
\end{lemma}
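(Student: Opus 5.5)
The plan is to factor $f\X_S g$ as a composite of two base changes and apply the projection formula together with proper base change twice. Concretely, $f\X_S g$ factors as
\[X'\X_S Y'\xrightarrow{f\X_S \id} X\X_S Y'\xrightarrow{\id\X_S g} X\X_S Y.\]
Each factor is a base change of $f$, respectively $g$: the squares
\[X'\X_S Y'\to X\X_S Y'\quad\text{over}\quad X'\to X,\]
\[X\X_S Y'\to X\X_S Y\quad\text{over}\quad Y'\to Y\]
are Cartesian (in schemes, respectively derived Artin stacks). Since $(-)_!$ is functorial for composition, it suffices to treat the case $g=\id$; the general case then follows by applying this twice, once in each variable.

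For $g=\id$, let $p_1:X\X_S Y\to X$ and $p_2:X\X_S Y\to Y$ be the projections, and $p'_1,p'_2$ the corresponding projections from $X'\X_S Y$. Write $h=f\X_S\id$. Then
\[A\boxtimes B=p_1'^*A\OX p_2'^*B=p_1'^*A\OX h^*p_2^*B,\]
because $p_2'=p_2\circ h$. The projection formula for $h_!$ gives
\[h_!(p_1'^*A\OX h^*p_2^*B)\simeq h_!p_1'^*A\OX p_2^*B.\]
Proper (i.e.\ $!$-$*$) base change in the Cartesian square formed by $h$, $p_1'$, $p_1$ and $f$ gives $h_!p_1'^*A\simeq p_1^*f_!A$. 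Combining these yields $f_!A\boxtimes B$, as required. Both ingredients hold for maps locally of finite type in the six-functor formalism cited in Conventions (Ayoub for schemes, Khan's appendix for derived Artin stacks).

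The only delicate point is bookkeeping: one must check that the relevant squares are genuinely Cartesian, which is formal from pasting of pullbacks. One must also check that the isomorphism obtained is independent of the order of factorization. For the statement as posed, which asks only for a natural isomorphism, a choice of factorization suffices. Naturality in $A$ and $B$ is automatic, since each step is a natural transformation.
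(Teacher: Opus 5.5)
Your proposal is correct and takes the same approach as the paper. The paper simply says the Künneth formula is a formal consequence of proper base change and the projection formula, citing Jin--Yang. Your factorization $f\X_S g=(\id\X_S g)\circ(f\X_S\id)$, with one application of the projection formula and one of base change for each factor, is the standard way of spelling that argument out.
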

\begin{proof}
    This is a formal consequence of proper base change and the projection formula (see, for example, \cite[Lemma 2.2.3]{JY21}.).
\end{proof}

\begin{lemma}\label{lem_hom_of_unit_conn_comp}
    Let $S$ be a connected scheme of finite dimension, or a connected derived Artin stack of finite type over a field. Then $\RHom_{D(S)}(\1_S,\1_S)$ is coconnective (\textit{i.e.} it lies in $\Vect^{\geq0}$), and $\Hom_{D(S)}(\1_S,\1_S)\simeq L$.
\end{lemma}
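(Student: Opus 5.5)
I will first treat the case of a connected scheme $S$ of finite dimension and then reduce the stacky case to it by smooth descent. The key input is the identification of $\RHom_{D(S)}(\1_S,\1_S)$ with a form of motivic cohomology. For rational étale motives, $\Hom_{D(S)}(\1_S,\1_S[n])$ is the rational motivic cohomology group $H^{n,0}_{\mathcal{M}}(S;L)$ of weight $0$. Rationally this agrees with the weight-$0$ part of rational $K$-theory (Adams eigenspace), or equivalently with the $h$-/étale-sheafified cohomology of the constant sheaf. Concretely, I would use that in Ayoub's model the weight-zero part of the unit is computed by the étale (or $h$-) cohomology of $S$ with coefficients in $L$. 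That is, $\RHom(\1_S,\1_S)\simeq R\Gamma_{h}(S,L)$, which agrees rationally with $R\Gamma_{\mathrm{et}}(S,L)$ on a normal scheme, and in general with $h$-cohomology by the results behind \cite{ayoub2014} (rigidity is not needed, weight $0$ being "discrete").

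Given this identification, coconnectivity is immediate, since sheaf cohomology has no negative degrees. In degree $0$ we get $H^0_h(S,L)$, the locally constant $L$-valued functions on $S$ for the $h$-topology. Because $h$-covers are universally submersive, connectedness of $S$ forces $H^0_h(S,L)=L$.

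For a connected derived Artin stack $X$ of finite type over a field, $D(X)\simeq D(X_{\mathrm{cl}})$ lets me assume $X$ is classical. I then choose a smooth surjection $u:T\to X$ from a scheme and form the Čech nerve $T_\bullet$. Sheafiness of $D(-)$ in the smooth topology gives
\[\RHom_{D(X)}(\1_X,\1_X)\simeq \lim_{\Delta}\RHom_{D(T_n)}(\1,\1).\]
A limit of coconnective objects is coconnective, which settles the first claim. For $H^0$, the totalization gives $\mathrm{eq}\big(H^0(T_0)\rightrightarrows H^0(T_1)\big)$. By the scheme case, $H^0(T_n)$ is $L^{\pi_0(T_n)}$, the locally constant functions, and the equalizer consists of the locally constant $L$-valued functions on $|X|$. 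This is $L$ because $X$ is connected.

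The main obstacle I expect is the precise citation for $\RHom(\1_S,\1_S)\simeq R\Gamma_h(S,L)$, or at least for its coconnectivity and for $H^0$ being the locally constant functions, for a possibly non-reduced or singular $S$. If a clean reference is lacking, I would instead reduce via nil-invariance and cdh/proper descent of $D$ to regular schemes, using de Jong alterations. Rationally, étale descent along finite flat maps takes care of the alterations. On a regular scheme $S$, $\Hom(\1,\1[n])=K_{-n}(S)^{(0)}_\QQ\otimes L$, which vanishes for $n<0$ and is $L^{\pi_0(S)}$ for $n=0$. Descent then preserves both properties, since limits preserve coconnectivity and $H^0$ of a limit of coconnective objects is the equalizer.
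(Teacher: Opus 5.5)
Your stacky case is the paper's argument. You pass to the classical truncation and write $\RHom(\1_S,\1_S)$ as $\lim_{\Delta}\RHom(\1_{T_n},\1_{T_n})$ for a smooth atlas. You then use that limits preserve coconnectivity, and that $H^0$ of the totalisation is the equaliser of $H^0(T_0)\rightrightarrows H^0(T_1)$. Describing that equaliser as the locally constant functions on $|S|$ (smooth maps are open, and $|T\X_ST|\to|T|\X_{|S|}|T|$ is surjective) is equivalent to the paper's chain-of-components argument. Like the paper, you apply ``the scheme case'' to the $T_n$. For a general Artin stack these are only algebraic spaces or higher stacks, so one more round of the same descent, by induction on the Artin level, is implicitly needed.

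You genuinely diverge in the scheme case, which the paper does not prove: it quotes \cite[Proposition 11.1.a]{ayoub2014}, which gives exactly the vanishing of $\Hom(\1_S,\1_S[n])$ for $n<0$ and $\Hom(\1_S,\1_S)\simeq L^{\pi_0(S)}$.

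You instead compute the whole complex as $R\Gamma_h(S,L)$. That would give more, namely the positive-degree groups too, and it is plausible at least for schemes of finite type over a field. In your write-up, however, it is asserted rather than derived; ``weight $0$ being discrete'' is not an argument.

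Your fallback has the right shape but needs repairs before it works:
\begin{itemize}
\item The formula $\Hom(\1,\1[n])=K_{-n}(S)^{(0)}_{\QQ}\otimes L$ needs the comparison of Ayoub's $\QQ$-linear \'etale motives with Beilinson motives. It holds only for regular $S$; in general one gets homotopy $K$-theory, so your opening identification with ``rational $K$-theory'' is wrong for singular $S$.
\item Alterations are proper and generically finite, not finite flat, and \'etale descent does not apply to finite flat maps either. What you need is $h$-descent for rational \'etale motives, e.g.\ cdh descent plus finite flat descent after Raynaud--Gruson flattening, the latter via traces with $\QQ$-coefficients.
\item The \v{C}ech nerve of an alteration is not regular, so you need a proper hypercover with regular terms, or an induction on dimension.
\item Alterations require excellence or finite type over a field. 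This route therefore does not reach an arbitrary finite-dimensional scheme as in the statement, though it suffices for the uses in this paper.
\end{itemize}

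With these repairs your route is a valid, self-contained but much heavier proof of what the paper simply cites.
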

Recall that for a derived Artin stack $S$, being connected means the underlying topological space $|S|$ is connected.
\begin{proof}
    The scheme case is \cite[Proposition 11.1.a]{ayoub2014}. Let $S$ be a connected derived Artin stack of finite type over a field. We may assume $S$ equal to its classical truncation. Let $T\rightarrow S$ be a smooth surjection from a finite dimensional scheme. By construction, $D(S)=\lim_{\Delta}D(T_n)$, where $T_n:=T\X_ST\X_S\cdots\X_ST$ (($n+1$) times). So $\RHom(\1_S,\1_S)\simeq \lim_{\Delta}\RHom(\1_{T_n},\1_{T_n})$ in $\Vect$. By the scheme case, every $\RHom(\1_{T_n},\1_{T_n})$ is coconnective. This implies the coconnectivity of $\RHom(\1_S,\1_S)$. Further, $\Hom(\1_S,\1_S)\simeq H^0\lim_{\Delta}\RHom(\1_{T_n},\1_{T_n})\simeq\lim_{\Delta}H^0\RHom(\1_{T_n},\1_{T_n})$, where the latter limit is taken in $\Vect^{\heartsuit}$. By \cite[Proposition 11.1.a]{ayoub2014}, it is isomorphic to $\lim_{\Delta}L^{\pi_0(T_n)}$. As $S$ is connected, for every pair of connected components $T_{0,i}$ and $T_{0,j}$ of $T_0$, there exists a sequence $T_{0,i_0}=T_{0,i}, T_{0,i_1},\cdots, T_{0,i_m}=T_{0,j}$ such that $T_{0,i_a}\X_ST_{0,i_{a+1}}$ is non-empty for all $a\in\{0,...,m-1\}$. It follows that $\lim_{\Delta}L^{\pi_0(T_n)}\simeq L$. 
\end{proof}

\subsection{Higher algebra}\label{subsec_prelim_ha}
The reference for this paragraph is \cite[\nopp §4.5]{lurie_ha}. Also see \cite[\nopp §2.2, §4.1]{benzvifrancisnadler} and references therein. Denote by $\Vect$ the symmetric monoidal stable category of $L$-vector spaces. Then $\Vect$ naturally lies in $\CAlg(\PrL)$, the commutative algebra objects in $\PrL$. We can form $\Mod_{\Vect}(\PrL)$, the symmetric monoidal category of $\Vect$-modules in $\PrL$. An object in $\Mod_{\Vect}(\PrL)$ will be called an \textit{$L$-linear category}. Let $A\in\CAlg(\Vect)$. For every $\CCC\in\Mod_{\Vect}(\PrL)$ we have $\Mod_A(\CCC)$, the stable category of $A$-modules in $\CCC$. By \cite[Proposition 4.1.1]{benzvifrancisnadler}, there is a canonical equivalence $\Mod_A(\CCC)\simeq\Mod_A(\Vect)\otimes_{\Vect}\CCC$. If $A\rightarrow B$ is a map in $\CAlg(\Vect)$, there is an adjunction
\[\begin{tikzcd}
	{\Mod_A(\Vect)} && {\Mod_B(\Vect)}
	\arrow[""{name=0, anchor=center, inner sep=0}, "{(-)\OX_AB}", curve={height=-12pt}, from=1-1, to=1-3]
	\arrow[""{name=1, anchor=center, inner sep=0}, "{(-)_A}", curve={height=-12pt}, from=1-3, to=1-1]
	\arrow["\dashv"{anchor=center, rotate=-90}, draw=none, from=0, to=1]
\end{tikzcd}\]
where the lower arrow is forgetting along $A\rightarrow B$. This induces, via the above equivalence, an adjunction between $\Mod_A(\CCC)$ and $\Mod_B(\CCC)$.\\

This discussion applies in particular to a (usual) commutative algebra $A$ over $L$, by viewing $A$ as a commutative algebra object in $\Vect$.

\begin{lemma}\label{lem_decomp_mod}
    Let $A=A_1\X A_2$ be a product of (usual) commutative algebras over $L$, and $\CCC$ an $L$-linear category. Then there is a canonical equivalence $\Mod_A(\CCC)\simeq \Mod_{A_1}(\CCC)\X\Mod_{A_2}(\CCC)$, given by $M\mapsto (M\OX_AA_1,M\OX_AA_2)$, $(M_1)_A\oplus(M_2)_A\mapsfrom(M_1,M_2)$.
\end{lemma}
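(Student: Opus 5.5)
The plan is to reduce everything to the case $\CCC=\Vect$ and then tensor with $\CCC$. By the equivalence $\Mod_A(\CCC)\simeq\Mod_A(\Vect)\OX_{\Vect}\CCC$ recalled above (\cite[Proposition 4.1.1]{benzvifrancisnadler}), it suffices to prove the statement for $\Mod_A(\Vect)$, naturally in $\CCC$. The reason this suffices is that $-\OX_{\Vect}\CCC$ is a functor on $\Mod_{\Vect}(\PrL)$ preserving finite products, since these are biproducts in $\PrL$. The functors in question are also compatible with this reduction, because they are induced from the base-change/forgetful adjunctions along $A\to A_i$.

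For $\CCC=\Vect$, let $e_1=(1,0)$ and $e_2=(0,1)$ be the orthogonal idempotents of $A$. The key steps, in order, are as follows.

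First, I show that the composite $M\mapsto M\OX_AA_i$ followed by forgetting along $A\to A_i$ is the idempotent endomorphism of $M$ given by multiplication by $e_i$. More precisely, $A_i\simeq Ae_i$ is a direct summand of $A$ as an $A$-module, so $M\OX_AA_i\simeq M\OX_A Ae_i$ is the summand $e_iM$. This gives a natural decomposition $M\simeq (M\OX_AA_1)_A\oplus(M\OX_AA_2)_A$, coming from $A\simeq Ae_1\oplus Ae_2$ as $A$-modules. That proves one composite is the identity.

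Second, for the other composite, I compute $(M_1)_A\OX_AA_2$ for $M_1\in\Mod_{A_1}$. This is $M_1\OX_{A_1}(A_1\OX_AA_2)$. Since $A_1\OX_AA_2=0$ (it is underived-equal to $A/(e_1,e_2)=0$, and flatness of $A_i$ over $A$ makes the derived tensor agree), it vanishes. Similarly $(M_1)_A\OX_AA_1\simeq M_1\OX_{A_1}(A_1\OX_AA_1)\simeq M_1$, because $A\to A_1$ is a localisation, hence $A_1\OX_AA_1\simeq A_1$. Together these show that the functor $(M_1,M_2)\mapsto(M_1)_A\oplus(M_2)_A$ followed by $M\mapsto (M\OX_AA_1,M\OX_AA_2)$ is naturally the identity. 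The maps involved are the counits of the adjunctions, so the identifications are canonical.

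The main obstacle is making these identifications coherent at the $\infty$-categorical level, rather than just objectwise. I would handle this by noting that the relevant maps are the units and counits of the adjunctions $(-)\OX_AA_i\dashv(-)_A$. One then only needs to check that they are isomorphisms, which can be tested on underlying objects of $\Vect$ (or of $\CCC$, since forgetting to $\CCC$ is conservative) using the idempotent computation above. Concretely, the functor $\Phi=((-)\OX_AA_1,(-)\OX_AA_2)$ has right adjoint $\Psi(M_1,M_2)=(M_1)_A\oplus(M_2)_A$, since finite products equal coproducts in the stable setting. The unit $M\to\Psi\Phi M$ and the counit $\Phi\Psi\to\id$ are isomorphisms by the two steps above. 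Hence $\Phi$ is an equivalence.
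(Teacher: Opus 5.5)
Your proposal is correct and follows the paper's route: reduce via $\Mod_A(\CCC)\simeq\Mod_A(\Vect)\OX_{\Vect}\CCC$ to $\CCC=\Vect$, then check that the two functors are mutually inverse using $A\simeq A_1\oplus A_2$ as $A$-modules. You go further than the paper, which only verifies one composite. You also check the other composite, via $A_1\OX_AA_2=0$ and $A_i\OX_AA_i\simeq A_i$, and you package the coherence through the unit and counit of $\Phi\dashv\Psi$, which the paper leaves implicit.
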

\begin{proof}
    Using $\Mod_A(\CCC)\simeq\Mod_A(\Vect)\otimes_{\Vect}\CCC$, it suffices to prove for $\CCC=\Vect$. Then  it is easy to check that the two functors are inverse to each other. We verify one direction: $M\mapsto (M\OX_AA_1,M\OX_AA_2)\mapsto (M\OX_AA_1)_A\oplus (M\OX_AA_2)_A\simeq M\OX_A(A_1\oplus A_2)\simeq M\OX_AA\simeq M$.
\end{proof}

We now discuss in detail one example of the above, to be used in §\ref{sec_AS}.\\

Fix an $L$-linear category $\CCC$. Let $A$ be the group algebra $L[\ZZ/p]\simeq \frac{L[t]}{t^p-1}$. We have $\frac{L[t]}{t^p-1}\simeq \frac{L[t]}{t-1}\X\frac{L[t]}{t-\zeta}\X\cdots\X\frac{L[t]}{t-\zeta^{p-1}}$. Denote the latter by $L_0\X L_1\X\cdots\X L_{p-1}$. Note each $L_i$ is isomorphic to $L$ as an $L$-vector space, via the map $L\rightarrow A\simeq L_0\X L_1\X\cdots\X L_{p-1}\xrightarrow{\mathrm{projection}}L_i$. Apply Lemma \ref{lem_decomp_mod}, we get, for every $M\in \Mod_{A}(\CCC)$, a canonical decomposition $M\simeq M_0\oplus M_1\oplus\cdots\oplus M_{p-1}$, where $M_i= (M\OX_A L_i)_{A}$. Each $M_i$ is canonically isomorphic to $M$ as objects in $\CCC$.\\


Now further assume $\CCC$ is a commutative algebra object in $L$-linear categories. Consider $A\otimes_L A':=\frac{L[t]}{t^p-1}\OX_L\frac{L[\tau]}{\tau^p-1}$. By \cite[Propositions 4.1.2, 4.1.1]{benzvifrancisnadler}, $\Mod_{A\otimes_L A'}(\CCC)\simeq\Mod_{A}(\CCC)\OX_\CCC\Mod_{A'}(\CCC)$. For $M\in\Mod_{A\otimes_L A'}(\CCC)$, let $N\otimes_\CCC N'$ be its image in $\Mod_{A}(\CCC)\OX_\CCC\Mod_{A'}(\CCC)$ under the above equivalence. We have a decomposition $N\otimes_\CCC N'\simeq (\oplus_iN_i)\OX_\CCC(\oplus_jN'_j)\simeq  \oplus_{(i,j)}N_i\OX_\CCC N'_j$, using the decomposition in the previous paragraph. Denote the image of $N_i\OX_\CCC N'_j$ in $\Mod_{A\otimes_L A'}(\CCC)$ by $M_{i,j}$, we get $M\simeq \oplus_{(i,j)}M_{i,j}$.\\

Assume $p>2$ for this paragraph. Consider the isomorphism $\frac{L[t]}{t^p-1}\OX_L\frac{L[\tau]}{\tau^p-1}\isoto \frac{L[u]}{u^p-1}\OX_L\frac{L[v]}{v^p-1}$, $t\otimes 1\mapsto u\otimes v$, $1\otimes\tau\mapsto u\otimes v^{-1}$. (Note the inverse is $t^{\frac{1}{2}}\otimes \tau^{\frac{1}{2}}\mapsfrom u\otimes 1$, $t^{\frac{1}{2}}\otimes \tau^{-\frac{1}{2}}\mapsfrom 1\otimes v$, where $\frac{1}{2}$ denotes the unique element $i$ in $\{0,1,...,p-1\}$ such that $2i=1$ in $\FF_p$.). This induces the commutative square
\[\begin{tikzcd}
	{\frac{L[t]}{t^p-1}\OX_L\frac{L[\tau]}{\tau^p-1}} & {\frac{L[u]}{u^p-1}\OX_L\frac{L[v]}{v^p-1}} \\
	{\prod_{(i,j)}\frac{L[t]}{t-\zeta^i}\OX_L\frac{L[\tau]}{\tau-\zeta^j}} & {\prod_{(a,b)}\frac{L[u]}{u-\zeta^a}\OX_L\frac{L[v]}{v-\zeta^b}}
	\arrow["\sim", from=1-1, to=1-2]
	\arrow["\simd"', from=1-1, to=2-1]
	\arrow["\simd", from=1-2, to=2-2]
	\arrow["\simeq"{description}, draw=none, from=2-1, to=2-2]
\end{tikzcd}\]
At the bottom, the $(i,j)$-th term is identified with the $(a,b)$-th term with $i=a+b$, $j=a-b$.\footnote{The indices are interpreted mod $p$.} This induces a similar identification between the decompositions of modules. Namely: given $M\in\Mod_{A\OX_LA'}(\CCC)$, we have $M\simeq\oplus_{(i,j)}M_{i,j}$ as before. Under the isomorphism of algebras $\frac{L[t]}{t-\zeta^i}\OX_L\frac{L[\tau]}{\tau-\zeta^j}\simeq\frac{L[u]}{u-\zeta^a}\OX_L\frac{L[v]}{v-\zeta^b}$, we can view $M$ as a module $M'$ over $\frac{L[u]}{u^p-1}\OX_L\frac{L[v]}{v^p-1}$, and have $M'\simeq\oplus_{(a,b)}M'_{a,b}$. Then, for $i=a+b$, $j=a-b$, $M_{i,j}$ and $M'_{a,b}$ correspond to each other under the isomorphism of algebras.

\section{The Artin–Schreier motives}\label{sec_AS}
Let $f:\AAA\!'^1=\Spec(k[t])\rightarrow\AAA^1=\Spec(k[x])$, $t^p-t\mapsfrom x$ be the Artin–Schreier covering. This is an \'etale Galois covering with Galois group $\ZZ/p$. The $\ZZ/p$-action on $\AAA\!'^1$ induces an action on $f^*\1$, hence on $f_*f^*\1$. Thus $f_*f^*\1$ is naturally an object of $\Mod_{L[\ZZ/p]}(D(\AAA^1))$. By the discussion in §\ref{subsec_prelim_ha}, we have a decomposition $f_*f^*\1\simeq \CL_0\oplus \CL_1\oplus\cdots\oplus\CL_{p-1}$ in $\Mod_{L[\ZZ/p]}(D(\AAA^1))$, where $1\in\ZZ/p$ acts by multiplication by $\zeta^i$ on $\CL_i$. We will abuse notations and denote the underlying object in $D(\AAA^1)$ of $\CL_i$ by the same letter. Note $f_*f^*\1\in D(\AAA^1)$ lies in $D_{\mathrm{lis}}(\AAA^1)$, the thick subcategory of dualisable objects (by \cite[Proposition 6.3.21]{cisinskideglise2016}), so each $\CL_i$ is dualisable.

\begin{definition}\label{def_as_motives}
    For $1\leq i\leq p-1$, the \underline{$i$-th Artin–Schreier motive} is $\CL_i\in D_{\mathrm{lis}}(\AAA^1)$ in the above decomposition. We will call $\CL_1$ “the” Artin–Schreier motive and denote it also by $\CL$.
\end{definition}

This section is devoted to establishing basic properties of Artin–Schreier motives. We first discuss a toy situation, which will also serve as the base case. We remind the reader that the subscript index in $\CL_i$ is interpreted mod $p$.

\subsection{Over a geometric point}\label{subsec_over_a_point}
Let $i: x\rightarrow \AAA^1$ be a geometric point. In this subsection, we analyse $i^*f_*f^*\1$. For the rest of this subsection, we denote $\AAA\!'^1\X_{\AAA^1}x\rightarrow x$ also by $f$.\\

Consider sheaves of $L$-vector spaces, denote the unit object by $L$. We have a decomposition $f_*f^*L\simeq L_0\oplus L_1\oplus\cdots\oplus L_{p-1}$ in $\Mod_{L[\ZZ/p]}(\Vect)$. Under this decomposition, the unit map $L\rightarrow f_*f^*L$ of the adjunction is an isomorphism to $L_0$, and the multiplication map $f_*f^*L\OX f_*f^*L\rightarrow f_*(f^*L\OX f^*L)\simeq f_*f^*L$ maps $L_i\OX L_j$ isomorphically to $L_{i+j}$.\\

Now let $\CCC$ be an $L$-linear category and consider sheaves valued in $\CCC$. Denote the unit object by $\1$. The sheaf operations are all induced from sheaves of $L$-vector spaces, so, by the equivalence $\Mod_{L[\ZZ/p]}(\CCC)\simeq\Mod_{L[\ZZ/p]}(\Vect)\otimes_{\Vect}\CCC$ (§\ref{subsec_prelim_ha}), the $\Vect$ case induces $f_*f^*\1\simeq L'_0\oplus L'_1\oplus\cdots\oplus L'_{p-1}$, with $L'_i$ being $L_i\OX \1$ under the above equivalence. Similarly, the unit map $\1\rightarrow f_*f^*\1$ is an isomorphism to $L'_0$, and the multiplication map $f_*f^*\1\OX f_*f^*\1\rightarrow f_*f^*\1$ maps $L'_i\OX L'_j$ isomorphically to $L'_{i+j}$.


\subsection{Properties of Artin–Schreier motives}
We now return to the set-up at the beginning of this section, and analyse $f_*f^*\1$. The unit map $\1\rightarrow f_*f^*\1$ and the multiplication map $f_*f^*\1\OX f_*f^*\1\rightarrow f_*f^*\1$ respect the $L[\ZZ/p]$-module structures ($\ZZ/p$ acts by the identity on $\1$ and $g\cdot (a\OX b)=ga\OX gb$ on $f_*f^*\1\OX f_*f^*\1$). So, by Lemma \ref{lem_decomp_mod}, $\1\rightarrow f_*f^*\1$ factors through $\CL_0$, and $f_*f^*\1\OX f_*f^*\1\rightarrow f_*f^*\1$ maps the factor $\CL_i\OX\CL_j$ to $\CL_{i+j}$.


\begin{lemma}\label{lem_L_products}
    With notations as above, the maps $\1\rightarrow\CL_0$ and  $\CL_i\OX\CL_j\rightarrow\CL_{i+j}$ are isomorphisms.
\end{lemma}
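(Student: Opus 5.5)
To prove that $\1\rightarrow\CL_0$ and $\CL_i\OX\CL_j\rightarrow\CL_{i+j}$ are isomorphisms, I will check them stalkwise. By Lemma \ref{lem_point_conservative}, pulling back to points is a conservative family for $D(\AAA^1)$. Every object involved is dualisable, hence constructible, so Remark \ref{rmk_conservativity} lets me restrict further to geometric points $i: x\rightarrow\AAA^1$. It therefore suffices to show that $i^*$ of each map is an isomorphism. This reduces the lemma to the toy computation of §\ref{subsec_over_a_point}, where the corresponding statements are already recorded for $L'_0$ and $L'_i\OX L'_j\rightarrow L'_{i+j}$.

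The first step is to identify $i^*(f_*f^*\1)$ with $f_*f^*\1$ computed over the geometric point $x$. Since $f$ is finite étale, proper base change gives $i^*f_*\simeq f_*i^*$, with $f_*=f_!$. This identification is $\ZZ/p$-equivariant, because the Galois action is defined on $\AAA\!'^1$ and pulls back to the fibre $\AAA\!'^1\X_{\AAA^1}x$. Next I observe that $i^*$ is $L$-linear and colimit-preserving. Applied to $\Mod_{L[\ZZ/p]}$-objects, it therefore commutes with the functors $(-)\OX_A L_i$ of Lemma \ref{lem_decomp_mod}, using the description $\Mod_A(\CCC)\simeq\Mod_A(\Vect)\otimes_{\Vect}\CCC$ and functoriality in $\CCC$. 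Hence $i^*\CL_j\simeq L'_j$, compatibly with the decompositions.

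The second step is compatibility of the structure maps. Under $i^*$, the unit map $\1\rightarrow f_*f^*\1$ goes to the unit map over $x$, because base change is compatible with adjunction units. The multiplication map $f_*f^*\1\OX f_*f^*\1\rightarrow f_*(f^*\1\OX f^*\1)\simeq f_*f^*\1$ goes to the corresponding map over $x$: $i^*$ is symmetric monoidal, and the lax monoidal structure of $f_*$ commutes with base change. Restricting to isotypic components, $i^*$ of $\1\rightarrow\CL_0$ becomes $\1\rightarrow L'_0$, and $i^*$ of $\CL_i\OX\CL_j\rightarrow\CL_{i+j}$ becomes $L'_i\OX L'_j\rightarrow L'_{i+j}$. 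Both are isomorphisms by §\ref{subsec_over_a_point}, so conservativity concludes.

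The main obstacle is this coherence bookkeeping. One must check that the proper base change isomorphism intertwines the lax monoidal structure maps, which is a standard projection-formula compatibility. One must also check that it respects the idempotent decomposition, which is formal once $i^*$ is viewed as an $L[\ZZ/p]$-linear functor.

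If one prefers to avoid the constructibility remark, the same argument works at arbitrary points followed by base change to a separable closure. Alternatively one can use the fact that $f_*f^*\1$ is dualisable, so isomorphisms can be detected after the étale cover $f$ itself, where $f^*f_*f^*\1\simeq\bigoplus_{\ZZ/p}\1$ trivialises everything explicitly.
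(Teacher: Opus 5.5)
Your proposal is correct and follows the paper's proof. You reduce, via Remark \ref{rmk_conservativity}, to geometric points $i: x\rightarrow\AAA^1$, use that pulling back the $\ZZ/p$-torsor $f$ along $i$ is compatible with the isotypic decomposition, and conclude from the computation in \S\ref{subsec_over_a_point} with $\CCC=D(x)$; your extra bookkeeping (proper base change for the finite map $f$, and compatibility with the unit and the lax monoidal structure) just spells out what the paper leaves implicit.
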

\begin{proof}
    By Remark \ref{rmk_conservativity}, it suffices to show these are isomorphisms at every geometric point $i: x\rightarrow \AAA^1$. We have a Cartesian diagram:
\[\begin{tikzcd}
	{\AAA\!'^1\X_{\AAA^1}x} & {\AAA\!'^1} \\
	x & {\AAA^{1}}
	\arrow[from=1-1, to=1-2]
	\arrow[""', from=1-1, to=2-1]
	\arrow["\square"{description, pos=0.45}, draw=none, from=1-1, to=2-2]
	\arrow["f", from=1-2, to=2-2]
	\arrow["i"', from=2-1, to=2-2]
\end{tikzcd}\]
As this is a pullback of $\ZZ/p$-torsors, $i^*$ commutes with decompositions of $L[\ZZ/p]$-modules. By the discussion in §\ref{subsec_over_a_point} (taking $\CCC$ to be $D(x)$), the conclusion follows.
\end{proof}

\begin{remark}\label{rmk_stalk_1}
    The proof also shows that, for every geometric point $i: x\rightarrow \AAA^1$ and $\CL_i$, we have $i^*\CL_i\simeq\1_x$. In fact, the same argument works for $\Spec(K)\rightarrow\AAA^1$ whenever $K$ is a splitting field for $t^p-t-\varphi(x)$, where $\varphi(x)$ denotes the image of $x\in k[x]$ in $K$.
\end{remark}

\begin{lemma}\label{lem_L_dual}
    For every $0\leq i\leq p-1$, we have $\CL_i\OX\CL_{p-i}\simeq \1$. Consequently, $\CL_{p-i}$ is canonically isomorphic to the dual $\CL_i^\vee$($:=\RHOM(\CL_i,\1_{\AAA^1})$).
\end{lemma}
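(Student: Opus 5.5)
The first assertion is a direct consequence of Lemma \ref{lem_L_products}. Taking $j=p-i$ there, the multiplication map restricts to an isomorphism $\CL_i\OX\CL_{p-i}\isoto\CL_{i+(p-i)}=\CL_p=\CL_0$, since indices are read mod $p$. The same lemma says the unit map $\1\rightarrow\CL_0$ is an isomorphism. Composing the first with the inverse of the second gives a canonical isomorphism
$$\mu_i:\CL_i\OX\CL_{p-i}\isoto\1.$$
For $i=0$ this is just $\1\OX\1\simeq\1$.

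For the duality statement, recall from the beginning of §\ref{sec_AS} that each $\CL_i$ lies in $D_{\mathrm{lis}}(\AAA^1)$, so it is dualisable and $\CL_i^\vee=\RHOM(\CL_i,\1)$ is its dual. By the tensor–Hom adjunction, $\mu_i$ corresponds to a map $\phi_i:\CL_{p-i}\rightarrow\RHOM(\CL_i,\1)=\CL_i^\vee$, and this map is canonical because $\mu_i$ is.

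It remains to show $\phi_i$ is an isomorphism. The cleanest route is formal: in any symmetric monoidal category, if $X\OX Y\simeq\1$ then $X$ is invertible with inverse $Y$, and the inverse of an invertible object is its dual, with evaluation map given by that isomorphism. So $\CL_{p-i}$ is the dual of $\CL_i$ with evaluation $\mu_i$, and $\phi_i$ is an isomorphism.

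If one wants to avoid this formal argument, one can instead check $\phi_i$ on stalks. All objects involved are constructible: the $\CL_i$ are dualisable, hence constructible, and $\RHOM$ preserves $D_c$. By Remark \ref{rmk_conservativity} it therefore suffices to test $\phi_i$ after pullback to geometric points $x\rightarrow\AAA^1$. Pullback commutes with $\RHOM$ out of a dualisable object, and by Remark \ref{rmk_stalk_1} both $\CL_i$ and $\CL_{p-i}$ restrict to $\1_x$. Moreover, by §\ref{subsec_over_a_point}, the restriction of the multiplication map is the identification $\1_x\OX\1_x\simeq\1_x$, so $\phi_i$ restricts to an isomorphism at every geometric point.

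The only point needing care is bookkeeping: keeping the identification canonical, which amounts to fixing the unit isomorphism $\1\simeq\CL_0$ once and for all. No genuine obstacle is expected.
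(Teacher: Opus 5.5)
Your proposal is correct and matches the paper's proof. The first claim is Lemma~\ref{lem_L_products} with $j=p-i$, combined with $\1\simeq\CL_0$; the second is the formal fact that a $\OX$-inverse in a closed symmetric monoidal category is a dual, which the paper likewise cites. Your stalkwise alternative via Remark~\ref{rmk_conservativity} also works, but it is unnecessary.
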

\begin{proof}
    The first claim follows from Lemma \ref{lem_L_products}, the second follows from general facts about duality in a closed symmetric monoidal category (see, for example, \cite[\nopp 02DY]{kerodon}). 
\end{proof}

\begin{lemma}\label{lem__1_L}
    For $1\leq a\leq p-1$, denote by $[a]$ the map $\AAA^1\rightarrow\AAA^1$, $k[x]\mapsfrom k[x]$, $ax\mapsfrom x$. Then $[a]^*\CL_i\simeq\CL_{ai}$ for every $0\leq i\leq p-1$.
\end{lemma}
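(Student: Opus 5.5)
We will compare the Artin–Schreier covering with its pullback along $[a]$. Since $[a]$ is an automorphism of $\AAA^1$, we can form the Cartesian square with top arrow $\AAA\!'^1\rightarrow\AAA\!'^1$, $t\mapsto at$. The key algebraic fact is that $a\in\FF_p^\times$, so $a^p=a$ and $(at)^p-(at)=a(t^p-t)$. Hence the map $[a]': \AAA\!'^1\rightarrow\AAA\!'^1$, $k[t]\ni t\mapsto at$, satisfies $f\circ[a]'=[a]\circ f$, and the square
\[\begin{tikzcd}
	{\AAA\!'^1} & {\AAA\!'^1} \\
	{\AAA^1} & {\AAA^1}
	\arrow["{[a]'}", from=1-1, to=1-2]
	\arrow["f"', from=1-1, to=2-1]
	\arrow["f", from=1-2, to=2-2]
	\arrow["{[a]}"', from=2-1, to=2-2]
\end{tikzcd}\]
is Cartesian, because both horizontal maps are isomorphisms.

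By base change we then get $[a]^*f_*f^*\1\simeq f_*[a]'^*f^*\1\simeq f_*f^*[a]^*\1\simeq f_*f^*\1$. The main step is to track the $\ZZ/p$-actions. On $\AAA\!'^1$ the generator $1\in\ZZ/p$ acts by $t\mapsto t+1$. Conjugating by $[a]'$ gives $[a]'^{-1}\circ(t\mapsto t+1)\circ[a]'$, which is $t\mapsto t+a^{-1}$, that is, the action of $a^{-1}\in\ZZ/p$. So the isomorphism above is an isomorphism of $L[\ZZ/p]$-modules once we twist the module structure by the group automorphism $g\mapsto a^{\pm1}g$. (The sign of the exponent has to be checked carefully against the variance of the functors.) Concretely, $1\in\ZZ/p$ acting on $[a]^*f_*f^*\1$ corresponds to the element $a^{\epsilon}$ acting on $f_*f^*\1$, for some $\epsilon=\pm1$.

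The twist by the automorphism $t\mapsto t^{a^{\epsilon}}$ of $L[t]/(t^p-1)$ sends the factor $L[t]/(t-\zeta^j)$ to the factor where $t$ acts by $\zeta^{a^{\epsilon}j}$. Using Lemma \ref{lem_decomp_mod} and the functoriality of the decomposition under pullback (as in the proof of Lemma \ref{lem_L_products}), this identifies the isotypic pieces: $[a]^*\CL_i$ is the piece of $f_*f^*\1$ on which $a^{\epsilon}$ acts by $\zeta^i$, equivalently on which $1$ acts by $\zeta^{a^{-\epsilon}i}$. This gives $[a]^*\CL_i\simeq\CL_{a^{-\epsilon}i}$.

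I expect the main obstacle to be the sign bookkeeping: getting $\CL_{ai}$ rather than $\CL_{a^{-1}i}$. This depends on the convention for how $\mathrm{Gal}(f)$ acts on $f^*\1$, hence on $f_*f^*\1$ (through pullback by the deck transformation or through its inverse), and that convention is exactly what fixes $\epsilon$. If the convention turns out to give $\CL_{a^{-1}i}$, I would instead use the map $t\mapsto a^{-1}t$ over $[a]$; but that map does not lie over $[a]$, so the resolution must really come from fixing the action convention so that the formula reads $\CL_{ai}$. I would check the case $a=-1$ directly, where $a=a^{-1}$ and the ambiguity disappears. I would also sanity-check against Lemma \ref{lem_L_dual} and multiplicativity: $i\mapsto(\text{index of }[a]^*\CL_i)$ must be an additive automorphism of $\ZZ/p$ compatible with Lemma \ref{lem_L_products}, since $[a]^*$ is monoidal.
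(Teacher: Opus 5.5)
Your strategy is the paper's: the Cartesian square with $[a]'\colon t\mapsto at$ on top, the fact that it intertwines the deck transformations via $g\mapsto ag$ (the paper writes $[a]i\cdot x=ai\cdot[a]x$), and then matching isotypic summands. However, as written you only reach $[a]^*\CL_i\simeq\CL_{a^{-\epsilon}i}$ with $\epsilon=\pm1$ undetermined, so the lemma is not yet proved. Your diagnosis of what fixes $\epsilon$ is also wrong: it is not a convention. Write $\tau_g\colon t\mapsto t+g$. If you replace the action of $g$ by that of $-g$ (pulling back along $\tau_g^{-1}$ rather than $\tau_g$), every $\CL_j$ is relabelled as $\CL_{-j}$. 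The statement ``$[a]^*\CL_i\simeq\CL_{ai}$ for all $i$'' is invariant under this relabelling, and so is the competing statement with $a^{-1}$. So no convention converts one into the other; exactly one of them holds. Your proposed sanity checks cannot decide either. For $a=\pm1$ one has $a=a^{-1}$, and both $i\mapsto ai$ and $i\mapsto a^{-1}i$ are additive automorphisms of $\ZZ/p$ compatible with Lemmas \ref{lem_L_products} and \ref{lem_L_dual}.

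The missing step is short, and your own conjugation formula already contains it. The base change isomorphism $[a]^*f_*f^*\1\simeq f_*[a]'^*f^*\1=f_*f^*\1$ is induced by $[a]'$; on sections it is $\psi\mapsto\psi\circ[a]'$. It therefore carries the automorphism of $[a]^*f_*f^*\1$ induced by $\tau_g$ to the automorphism of $f_*f^*\1$ induced by $[a]'^{-1}\circ\tau_g\circ[a]'=\tau_{a^{-1}g}$. Explicitly, $\phi\mapsto\phi\circ[a]'^{-1}\circ\tau_g^{\pm1}\circ[a]'=\phi\circ\tau_{a^{-1}g}^{\pm1}$, which is the action of $a^{-1}g$ in whichever convention you started with. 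Hence $\epsilon=-1$ unconditionally. Since $[a]^*$ is compatible with the decomposition of Lemma \ref{lem_decomp_mod}, $[a]^*\CL_i$ is the summand of $[a]^*f_*f^*\1$ on which $1$ acts by $\zeta^i$. It corresponds to the summand of $f_*f^*\1$ on which $a^{-1}$ acts by $\zeta^i$, and since $a^{-1}$ acts on $\CL_j$ by $\zeta^{a^{-1}j}$, this summand is $\CL_{ai}$. A heuristic check that does distinguish $a$ from $a^{-1}$: over $\FF_q$, Frobenius acts on the fibre of $f$ over $x$ by $\tau_{\mathrm{Tr}(x)}$. So the trace functions of $[a]^*\CL_i$ and $\CL_{ai}$ are both $x\mapsto\zeta^{\pm ai\,\mathrm{Tr}(x)}$, with the same sign.
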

\begin{proof}
    Consider the following diagram:
\[\begin{tikzcd}
	{\AAA\!'^1} & {\AAA\!'^1} \\
	{\AAA^{1}} & {\AAA^{1}}
	\arrow["{[a]}", from=1-1, to=1-2]
	\arrow["f"', from=1-1, to=2-1]
	\arrow["\square"{description, pos=0.55}, draw=none, from=1-1, to=2-2]
	\arrow["f", from=1-2, to=2-2]
	\arrow["{[a]}"', from=2-1, to=2-2]
\end{tikzcd}\]
Note that it is Cartesian, and intertwines the $\ZZ/p$-actions under the map $\ZZ/p\rightarrow\ZZ/p: i\mapsto ai$, \textit{i.e.}, for a point $x$ in the left top $\AAA\!'^1$, we have $[a]i\cdot x=ai\cdot [a]x$. It follows that in the decomposition of the $f_*f^*\1$’s, the $\CL_i$ factor on the right corresponds to the $\CL_{ai}$ factor on the left.
\end{proof}

\begin{lemma}\label{lem_coho_L_zero}
    For every $i\neq0$, we have $p_*\CL_i=0$ and $p_!\CL_i=0$, where $p: \AAA^1\rightarrow \Spec(k)$ is the structure map.
\end{lemma}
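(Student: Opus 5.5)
We argue via the isotypic decomposition of $p_*f_*f^*\1$ and $p_!f_!f^*\1$ as $L[\ZZ/p]$-modules. Since $f$ is finite \etale, $f_*\simeq f_!$. Hence $p_*f_*f^*\1\simeq (p\circ f)_*\1_{\AAA\!'^1}$ and $p_!f_*f^*\1\simeq (p\circ f)_!\1_{\AAA\!'^1}$. The $\ZZ/p$-action on $f_*f^*\1$ is induced by the deck transformations $t\mapsto t+c$ of $\AAA\!'^1$. Pushing forward along $p$ commutes with the decomposition of Lemma \ref{lem_decomp_mod}, because $p_*$ and $p_!$ are $L$-linear functors and so induce functors on $\Mod_{L[\ZZ/p]}(-)$ compatible with the projections to each factor $L_i$. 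Thus $p_*\CL_i$ is the $\zeta^i$-isotypic part of $(pf)_*\1_{\AAA\!'^1}$, and similarly for $p_!$. So it suffices to show that the translation action of $\ZZ/p\subseteq\Ga$ on $(pf)_*\1_{\AAA\!'^1}$ and on $(pf)_!\1_{\AAA\!'^1}$ is trivial. Then these are modules on which $t$ acts by $1$, and all components with $i\neq0$ vanish.

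For triviality, we use $\AAA^1$-homotopy invariance. The action of $c\in\FF_p$ on $\AAA\!'^1$ is translation $\tau_c$, which extends to the family $H:\AAA\!'^1\X\AAA^1\to\AAA\!'^1$, $(t,s)\mapsto t+sc$, restricting to $\id$ at $s=0$ and $\tau_c$ at $s=1$.

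\emph{The $p_*$ case.} $(pf)_*\1=\RHom(\1,\1)$ over $\AAA\!'^1$, i.e. motivic cohomology $\RHom_{D(\AAA\!'^1)}(\1,\1)\simeq\RHom_{D(k)}(\1,\1)$ by homotopy invariance, since the pullback along $\AAA\!'^1\to\Spec k$ is fully faithful. The action of $\tau_c$ is then the pullback $\tau_c^*$. This commutes with the pullback from $\Spec k$, because $pf\tau_c=pf$, so it acts trivially. More concretely, the unit $\1_k\to(pf)_*\1$ is an isomorphism, and it is equivariant for the trivial action on the source.

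\emph{The $p_!$ case.} Here I would use the same argument with compact supports, $(pf)_!\1\simeq\1\langle-1\rangle$, the fibre-dimension trace isomorphism. The trace map $(pf)_!(pf)^!\1\to\1$ is natural with respect to automorphisms over $\Spec k$, so $\tau_c$ acts trivially. Alternatively, deduce the $p_!$ case from the $p_*$ case by duality: $\CL_i$ is dualisable with dual $\CL_{-i}$ (Lemma \ref{lem_L_dual}), and $D$ of $p_!\CL_i$ is $p_*(\CL_{-i}\langle1\rangle)$ up to Tate twist, with $p$ smooth of dimension $1$. Verdier duality on constructible objects then gives the vanishing, since $p_*\CL_{-i}=0$ and $\CL_i$ is constructible.

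The main obstacle is making rigorous that the induced $\ZZ/p$-action on $(pf)_*\1$ is the one coming through the isomorphism $\RHom(\1_{\AAA\!'^1},\1_{\AAA\!'^1})\simeq\RHom(\1_k,\1_k)$ as a genuine action, not merely as homotopies one element at a time. Since the target is a module over $L[\ZZ/p]$ in $\Vect$, we must know the coherent action is trivial. I would handle this by observing that the unit map $\1_k\to (pf)_*(pf)^*\1_k$ is $\ZZ/p$-equivariant for the trivial action on the source (as $pf\circ\tau_c=pf$ coherently) and is an isomorphism. Thus $(pf)_*\1$ is equivalent, as an $L[\ZZ/p]$-module, to the trivial module, whose nontrivial isotypic parts vanish. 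The dual argument with the counit $(pf)_!(pf)^!\1\to\1$ handles $p_!$.
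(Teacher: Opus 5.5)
Your argument is correct, but its key step differs from the paper's.

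The paper also pushes forward the equivariant unit $\1\to f_*f^*\1$. After that it uses only abstract isomorphisms: $p_*f_*f^*\1\simeq\1\simeq p_*\CL_0$, and in the other case $p_!f_*f^*\1\simeq(pf)_!\1\simeq\1\langle-1\rangle\simeq p_!\CL_0$. It then concludes from the indecomposability of $\1$ (resp. $\1\langle-1\rangle$) in $D(\Spec(k))$, i.e. from $\Hom(\1,\1)\simeq L$ having no non-trivial idempotents. A decomposition $\1\simeq\1\oplus\bigoplus_{i\neq0}p_*\CL_i$ then forces the complement to vanish.

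You instead show that the whole $L[\ZZ/p]$-module $(pf)_*\1$ is trivial, because the unit $\1_k\to(pf)_*(pf)^*\1_k$ is an equivariant isomorphism out of the trivial module. In the $p_*$ case this bypasses indecomposability, and hence Lemma~\ref{lem_hom_of_unit_conn_comp}.

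The cost shows up in the $p_!$ case. For the paper it is the same argument verbatim. For you, the relevant map $p_!\1\to p_!f_*f^*\1$ is not known a priori to be an isomorphism, since that is equivalent to what is being proved.
\begin{itemize}
\item Your first option needs the trace $f_!f^!\to\id$ to be invariant under the deck transformations, after the identifications $f_*\simeq f_!$ and $f^*\simeq f^!$. This is true, but you only assert it.
\item Your second option is complete. It imports biduality for constructible motives over $\Spec(k)$, applied to $p_!\CL_i$, whose dual is $p_*\CL_{-i}\langle 1\rangle=0$.
\end{itemize}

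Two minor remarks. First, the ``main obstacle'' you raise is not really one. The isotypic parts are the images of the idempotents $e_i=\frac1p\sum_{c}\zeta^{-ic}[c]$, so it suffices that each $c\in\ZZ/p$ acts by the identity in the homotopy category. Second, the homotopy $H$ in your second paragraph is never actually used.
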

\begin{proof}
    We show for $p_*$, the case $p_!$ is similar. Apply $p_*$ to $\1\rightarrow f_*f^*\1$, we get $p_*\1\rightarrow p_*f_*f^*\1\simeq p_*\CL_0\oplus p_*\CL_1\oplus\cdots \oplus p_*\CL_{p-1}$, which is an isomorphism to $p_*\CL_0$. We have $p_*\1\simeq p_*\CL_0\simeq \1$ and $p_*f_*f^*\1\simeq p_*f_*\1_{\AAA\!'^1}\simeq \1$. As $\1$ is indecomposable in $D(\Spec(k))$,\footnote{In fact, $\1_X$ is indecomposable for every connected scheme $X$ of finite dimension, by Lemma \ref{lem_hom_of_unit_conn_comp}.} the conclusion follows.
\end{proof}

\begin{lemma}
    For every $0\leq i,j\leq p-1$, $\RHom(\CL_i,\CL_j)\simeq L$ if $i=j$, and $0$ otherwise.
\end{lemma}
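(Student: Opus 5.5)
Here $\RHom$ means $\RHom_{D(\AAA^1)}$, computed in $\Vect$. The plan is to reduce the statement to a computation of global sections of a single Artin–Schreier motive, using the monoidal properties already established.

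First, since every $\CL_i$ is dualisable (indeed invertible by Lemma \ref{lem_L_dual}), adjunction gives
\[
\RHom(\CL_i,\CL_j)\simeq\RHom(\1_{\AAA^1},\CL_i^\vee\OX\CL_j)\simeq\RHom(\1_{\AAA^1},\CL_{p-i}\OX\CL_j)\simeq\RHom(\1_{\AAA^1},\CL_{j-i}).
\]
Here the last two identifications use Lemma \ref{lem_L_dual} and Lemma \ref{lem_L_products}, with indices taken mod $p$. Next, with $p:\AAA^1\rightarrow\Spec(k)$ the structure map, we rewrite this as
\[
\RHom(\1_{\AAA^1},\CL_{j-i})\simeq\RHom(p^*\1_k,\CL_{j-i})\simeq\RHom_{D(\Spec(k))}(\1_k,p_*\CL_{j-i}).
\]
So everything reduces to computing $p_*\CL_{j-i}$.

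If $i\neq j$, then $j-i\not\equiv 0$, so $p_*\CL_{j-i}=0$ by Lemma \ref{lem_coho_L_zero}, and the $\RHom$ vanishes.

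If $i=j$, then $\CL_0\simeq\1$ by Lemma \ref{lem_L_products}, so we need $\RHom(\1_{\AAA^1},\1_{\AAA^1})\simeq\RHom_{D(k)}(\1,p_*\1)$. By $\AAA^1$-invariance of étale motives, $p_*\1_{\AAA^1}\simeq\1_k$; this was already used in the proof of Lemma \ref{lem_coho_L_zero}. The problem therefore becomes $\RHom_{D(k)}(\1_k,\1_k)$, and we want this to be $L$ concentrated in degree $0$.

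Lemma \ref{lem_hom_of_unit_conn_comp} only gives coconnectivity and $H^0=L$. The main obstacle is the vanishing of the positive-degree groups $H^n$ for $n>0$. These are rational motivic cohomology groups of $\Spec(k)$ in weight $0$. With rational coefficients, weight-zero motivic cohomology of a field is $\QQ$ in degree $0$ and vanishes otherwise; this can be seen via the comparison with Morel/Voevodsky motives or $K$-theory weight $0$ eigenspaces, by \cite[Proposition 11.1]{ayoub2014} or \cite{cisinskideglise2016}. I would first try to cite the full statement of \cite[Proposition 11.1]{ayoub2014}, which I expect computes all of $\RHom(\1,\1)$ for regular (or normal) schemes and not just $H^0$; tensoring with $L$ then gives $L$. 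If only $H^0$ were available, I would instead argue through étale descent for $k$, using that Galois cohomology with $\QQ$-coefficients vanishes in positive degrees.

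Combining the two cases gives $\RHom(\CL_i,\CL_j)\simeq L$ for $i=j$ and $0$ otherwise.
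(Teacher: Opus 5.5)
Your proposal is correct and matches the paper's proof: you reduce via $\CL_i^\vee\simeq\CL_{p-i}$ and $\CL_{p-i}\OX\CL_j\simeq\CL_{j-i}$ to $\RHom_{\Spec(k)}(\1,p_*\CL_{j-i})$, dispose of $i\neq j$ with Lemma \ref{lem_coho_L_zero}, and for $i=j$ use $\CL_0\simeq\1$, $p_*\1\simeq\1$ and the vanishing of positive-degree weight-zero rational motivic cohomology of $k$. For that last input the paper cites \cite[Theorem 5.2.2]{cisinskideglise2016}, i.e.\ the $K$-theoretic comparison you mention, and not \cite[Proposition 11.1]{ayoub2014}, which the paper uses only for coconnectivity and $H^0$.
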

\begin{proof}
    We have $\RHom(\CL_i,\CL_j)\simeq\RHom(\1,\RHOM(\CL_i,\CL_j))\simeq \RHom(\1,\CL_{p-i}\OX\CL_j)\simeq \RHom_{\Spec(k)}(\1,p_*\CL_{p-i+j})$, where we used Lemma \ref{lem_L_dual} in the second step, and adjunction and Lemma \ref{lem_L_products} in the third. The claim now follows from Lemma \ref{lem_L_products}, Lemma \ref{lem_coho_L_zero} and \cite[Theorem 5.2.2]{cisinskideglise2016}.
\end{proof}

\begin{proposition}[Additivity]\label{prop_L_additive}
    For every $0\leq i\leq p-1$, $\CL_i$ is additive: let $q_1$, $q_2$ be the two projections $\AAA^1\X\AAA^1\rightarrow\AAA^1$ and $\mathrm{add}: \AAA^1\X\AAA^1\rightarrow\AAA^1$ be the addition map, then there is a canonical isomorphism $q_1^*\CL_i\OX q_2^*\CL_i\simeq \mathrm{add}^*\CL_i$ in $D(\AAA^1\X\AAA^1)$.
\end{proposition}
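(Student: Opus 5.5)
We prove additivity by exhibiting a map of $\ZZ/p$-torsors over $\AAA^1\X\AAA^1$ and comparing isotypic decompositions, as in the proof of Lemma \ref{lem__1_L}. The key geometric input is the addition map on the Artin–Schreier cover. Since $(t_1+t_2)^p-(t_1+t_2)=(t_1^p-t_1)+(t_2^p-t_2)$, the map $\mathrm{add}':\AAA\!'^1\X\AAA\!'^1\rightarrow\AAA\!'^1$, $(t_1,t_2)\mapsto t_1+t_2$, covers $\mathrm{add}$. Set $g=f\X f:\AAA\!'^1\X\AAA\!'^1\rightarrow\AAA^1\X\AAA^1$. This is a $\ZZ/p\X\ZZ/p$-torsor, and $\mathrm{add}'$ is equivariant along the sum map $\ZZ/p\X\ZZ/p\rightarrow\ZZ/p$. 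Hence $\mathrm{add}'$ factors through a map of $\ZZ/p$-torsors $T:=(\AAA\!'^1\X\AAA\!'^1)/\Delta^{-}\rightarrow \AAA\!'^1\X_{\AAA^1,\mathrm{add}}(\AAA^1\X\AAA^1)$, where $\Delta^-=\{(a,-a)\}$ is the kernel of the sum map. A map of $\ZZ/p$-torsors is an isomorphism, so $\mathrm{add}^*f_*f^*\1\simeq h_*h^*\1$ as $L[\ZZ/p]$-modules, with $h:T\rightarrow\AAA^1\X\AAA^1$. Here we use étale (smooth) base change along the Cartesian square, and that base change respects the Galois action as in Lemma \ref{lem__1_L}.

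The second step identifies both sides inside $g_*g^*\1$. Künneth (Lemma \ref{lem_kunneth}, with $f$ finite so $f_*=f_!$) gives $g_*g^*\1\simeq q_1^*f_*f^*\1\OX q_2^*f_*f^*\1$, compatibly with the $L[\ZZ/p]\OX_L L[\ZZ/p]$-module structures. Using the decomposition of modules over $A\OX_LA'$ from §\ref{subsec_prelim_ha}, the $(i,j)$-isotypic piece is $q_1^*\CL_i\OX q_2^*\CL_j$. On the other hand, $h_*h^*\1=(g_*g^*\1)^{\Delta^-}$, the invariants under $\Delta^-$; these are taken via the idempotent for the trivial character of $\Delta^-$, which is legitimate since we are $L$-linear. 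The characters of $\ZZ/p\X\ZZ/p$ trivial on $\Delta^-$ are exactly those with $i=j$. So $h_*h^*\1\simeq\bigoplus_i q_1^*\CL_i\OX q_2^*\CL_i$, and the residual $\ZZ/p=(\ZZ/p)^2/\Delta^-$ acts on the $i$-th summand by $\zeta^i$. Comparing isotypic parts with the first step yields $\mathrm{add}^*\CL_i\simeq q_1^*\CL_i\OX q_2^*\CL_i$.

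The main obstacle is making the second step rigorous at the level of $\infty$-categorical modules. Concretely, one must check that the unit $h_*h^*\1\rightarrow g_*g^*\1$, which is induced by the quotient $\AAA\!'^1\X\AAA\!'^1\rightarrow T$, identifies $h_*h^*\1$ with the $\Delta^-$-isotypic trivial summand, equivariantly. I would first construct the map $h_*h^*\1\rightarrow g_*g^*\1$ and verify that it is equivariant for $L[\ZZ/p]\rightarrow L[\ZZ/p]\OX L[\ZZ/p]$ twisted by the inverse of the sum map. By Lemma \ref{lem_decomp_mod}, it then lands in $\bigoplus_i M_{i,i}$. To show that the induced map is an isomorphism, it suffices by Remark \ref{rmk_conservativity} to check at geometric points, where everything reduces to the explicit computation of §\ref{subsec_over_a_point}: the fibre is a finite set with free actions, and the claim is elementary linear algebra over $L$.

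An alternative for $p>2$ is the change of variables $u,v$ in §\ref{subsec_prelim_ha}, which is clearly set up for this purpose. It identifies $M_{i,i}$ with $M'_{a,0}$ where $i=a$, that is, with the invariants in the $v$-direction. I would use this to phrase the invariants cleanly and treat $p=2$ directly.
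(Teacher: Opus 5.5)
Your proposal is correct, but it takes a genuinely different route from the paper. The paper writes out only the case $p>2$. After reducing to $i=1$ via Lemma~\ref{lem_L_products}, it uses the $\ZZ/p\X\ZZ/p$-torsor $\pi\colon W\rightarrow\AAA^1_x\X\AAA^1_y$ (your $g$) together with the isomorphism $c'\colon W\isoto W'$, $(x,y,t,\tau)\mapsto(x+y,x-y,t+\tau,t-\tau)$, onto a second copy of the same torsor. This isomorphism lies over $c\colon(x,y)\mapsto(x+y,x-y)$ and intertwines the actions via $(n,m)\mapsto(n+m,n-m)$. Transporting isotypic pieces along it gives $\CL_1\boxtimes\CL_1\simeq(\pi_*\1)_{(1,1)}\simeq c^*(\pi'_*\1)_{(1,0)}\simeq c^*\pr_z^*\CL_1\simeq\mathrm{add}^*\CL_1$.

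So the paper's proof is essentially your last-paragraph alternative, realised by an actual isomorphism of torsors. It only uses that such isomorphisms permute isotypic summands, plus $\CL_0\simeq\1$. It never computes invariants under a subgroup, but it needs $2$ to be invertible.

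Your main argument instead uses that the Artin--Schreier cover is a group homomorphism: $(t_1,t_2)\mapsto t_1+t_2$ factors $g$ through a $\Delta^-$-torsor $r$ over $\mathrm{add}^*\AAA\!'^1$. The extra input this requires is that the unit identifies $h_*h^*\1$ with the summand $\bigoplus_i M_{i,i}$ of $g_*g^*\1$. This is less of an obstacle than you suggest. It is the first assertion of Lemma~\ref{lem_L_products} ($\1\isoto\CL_0$), whose proof uses only that $f$ is a $\ZZ/p$-torsor and so applies verbatim to $r$; you then apply $h_*$. All objects involved are lisse, hence constructible, so Remark~\ref{rmk_conservativity} is available for the geometric-point check. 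In exchange you get a uniform proof for every $p$, including $p=2$, and for every $i$ at once, with no reduction to $i=1$.

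One phrasing fix. The equivariance you must check is that $h_*h^*\1\rightarrow g_*g^*\1$ is a map of $L[\ZZ/p\X\ZZ/p]$-modules. The source is regarded as such a module by restriction along the algebra map $L[\ZZ/p\X\ZZ/p]\rightarrow L[\ZZ/p]$ induced by the sum map. There is no inverse of the sum map to twist by. This equivariance is also what guarantees that the residual $\ZZ/p$-isotypic pieces of $h_*h^*\1$ match the $M_{i,i}$.
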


This is proved in \cite[Lemma 3.3]{cass_vdhove_scholbach} and \cite[Lemma 5.1.1]{FYZ3}. We record another proof in the case $p>2$ as a possibly useful additional reference.

\begin{proof}[Proof (for $p>2$)]
By Lemma \ref{lem_L_products}, it suffices to prove for $i=1$. Consider the following Cartesian square (here $\AAA^1_x=\Spec(k[x])$, \textit{etc.}):
\[\begin{tikzcd}[row sep=0.9cm, column sep=0.5cm]
	& W & \\
	{\CA_t} && {\CA_\tau} \\
	& {\AAA_x^{1}\X\AAA_y^{1}}
	\arrow[from=1-2, to=2-1]
	\arrow[from=1-2, to=2-3]
	\arrow["\pi", from=1-2, to=3-2]
	\arrow[from=2-1, to=3-2]
	\arrow[from=2-3, to=3-2]
\end{tikzcd}\]
where $\CA_t\rightarrow\AAA_x^{1}\X\AAA_y^{1}$ is the pullback under the projection $\AAA_x^{1}\X\AAA_y^{1}\rightarrow\AAA^1_x$ of the Artin–Schreier covering $\AAA^1_t\rightarrow\AAA^1_x$, and similarly for $\CA_\tau$ (in the $y$-coordinate). Then $\pi$ is a $\ZZ/p\X\ZZ/p$-torsor, and we have a decomposition $\pi_*\1\simeq \oplus_{(i,j)}(\pi_*\1)_{(i,j)}$ as $\frac{L[t]}{t^p-1}\OX_L\frac{L[\tau]}{\tau^p-1}$-modules in $D(\AAA_x^{1}\X\AAA_y^{1})$ (recall the discussion in §\ref{subsec_prelim_ha}). Here, we abuse notations and use the letters $t$ and $\tau$ also for the variables in the group algebras. We have $(\pi_*\1)_{(i,j)}\simeq \CL_i\boxtimes\CL_j\in \Mod_{\frac{L[t]}{t^p-1}\OX_L\frac{L[\tau]}{\tau^p-1}}(D(\AAA_x^{1}\X\AAA_y^{1}))$.\\\\
Consider another copy of the above diagram, with coordinates denoted by new letters $z$, $w$, $u$, $v$:
\[\begin{tikzcd}[row sep=0.9cm, column sep=0.5cm]
	& {W'} & \\
	{\CA_u} && {\CA_v} \\
	& {\AAA_z^{1}\X\AAA_w^{1}}
	\arrow[from=1-2, to=2-1]
	\arrow[from=1-2, to=2-3]
	\arrow["{\pi'}", from=1-2, to=3-2]
	\arrow[from=2-1, to=3-2]
	\arrow[from=2-3, to=3-2]
\end{tikzcd}\]
Let $c$ be the isomorphism $\AAA_x^{1}\X\AAA_y^{1}\rightarrow\AAA_z^{1}\X\AAA_w^{1}$, $(x,y)\mapsto (x+y,x-y)$, and $\mathrm{add}: \AAA_x^{1}\X\AAA_y^{1}\rightarrow\AAA^1$ the addition. Then $\mathrm{add}^*\CL_1\simeq c^*\pr_z^*\CL_1$, where $\pr_z$ denotes the projection to the $\AAA^1_z$-factor. Note $\pr_z^*\CL_1$ is the $(1,0)$-part of the decomposition $\pi'_*\1\simeq\oplus_{(a,b)}(\pi'_*\1)_{(a,b)}$ as $\frac{L[u]}{u^p-1}\OX_L\frac{L[v]}{v^p-1}$-modules in $D(\AAA_z^{1}\X\AAA_w^{1})$.\\\\
We have the following commutative diagram:
\[\begin{tikzcd}
	W & {W'} \\
	\\
	{\AAA_x^{1}\X\AAA_y^{1}} & {\AAA_z^{1}\X\AAA_w^{1}}
	\arrow["\sim"', from=1-1, to=1-2]
	\arrow["{c'}", from=1-1, to=1-2]
	\arrow["\pi"', from=1-1, to=3-1]
	\arrow["{\pi'}", from=1-2, to=3-2]
	\arrow["c"', from=3-1, to=3-2]
	\arrow["\sim", from=3-1, to=3-2]
\end{tikzcd}\]
where $c': (x,y,t,\tau)\mapsto (z,w,u,v)=(x+y,x-y,t+\tau,t-\tau)$. One verifies that this diagram is compatible with the $\ZZ/p\X\ZZ/p$-torsor structures under the isomorphism $\ZZ/p\X\ZZ/p\isoto\ZZ/p\X\ZZ/p, (n,m)\mapsto(n+m,n-m)$, which, in terms of algebras, is $\frac{L[t]}{t^p-1}\OX_L\frac{L[\tau]}{\tau^p-1}\isoto\frac{L[u]}{u^p-1}\OX_L\frac{L[v]}{v^p-1}, t\otimes1\mapsto u\otimes v, 1\otimes \tau\mapsto u\otimes v^{-1}$. By the last paragraph in §\ref{subsec_prelim_ha}, $(\pi_*\1)_{(i,j)}\simeq c^*(\pi'_*\1)_{(a,b)}$ for $i=a+b$, $j=a-b$. Combined with the above, we get $\CL_1\boxtimes\CL_1\simeq(\pi_*\1)_{(1,1)}\simeq c^*(\pi'_*\1)_{(1,0)}\simeq c^*\pr_z^*\CL_1\simeq \mathrm{add}^*\CL_1$.
\end{proof}

\section{The motivic Fourier transform on vector bundles}\label{sec_fourier_on_vect}
In this section, we construct the theory of Fourier transform for \'etale $L$-motives on vector bundles over finite dimensional schemes over $k$. Here a vector bundle means the total space of a locally free sheaf locally of finite rank. This will serve as the base for the theory on derived vector bundles. See Conventions for the notations.

\begin{definition}\label{def_fourier_transform}
    Let $E\rightarrow X$ be a vector bundle on a finite dimensional scheme over $k$, and $E'$ be the dual bundle. The \underline{(motivic) Fourier transform} $F$ is the functor $D(E)\rightarrow D(E')$ defined as the integral transform with kernel $\CL$ (Definition \ref{def_as_motives}), \textit{i.e.}, $F:=p'_!(p^*(-)\OX m^*\CL)$, where $p'$, $p$, and $m$ are as in the following diagram:
\[\begin{tikzcd}
	& {E\times_XE'} && {\AAA^1} \\
	E && {E'}
	\arrow["{m\,\,\mathrm{(pairing)}}", from=1-2, to=1-4]
	\arrow["p"', from=1-2, to=2-1]
	\arrow["{p'}", from=1-2, to=2-3]
\end{tikzcd}\]
\end{definition}

\begin{proposition}[Involutivity]\label{prop_involutivity}
    There is a natural isomorphism $F'F\simeq [-1]^*\Tmd$, where $[-1]: E\rightarrow E$ is the map of multiplication by $-1$.
\end{proposition}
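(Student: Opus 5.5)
The plan is to compute $F'F$ as an integral transform and identify its kernel. We use proper base change and the projection formula. For $A\in D(E)$, consider $E\X_X E'\X_X E$ with projections $q_1$ to the first $E$, $q_2$ to $E'$, and $q_3$ to the last $E$. Then
\[
F'FA\simeq q_{3!}\bigl(q_1^*A\OX q_{12}^*m^*\CL\OX q_{23}^*m'^*\CL\bigr).
\]
Here $m'$ is the pairing $E'\X_XE\to\AAA^1$.

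By Additivity (Proposition \ref{prop_L_additive}), the two Artin–Schreier factors combine. We get $q_{12}^*m^*\CL\OX q_{23}^*m^*\CL\simeq n^*\CL$, where $n(e_1,e',e_3)=\langle e_1+e_3,e'\rangle$. Change variables by the automorphism $(e_1,e',e_3)\mapsto(e_1+e_3,e',e_3)$ of $E\X_XE'\X_XE$ over $E\X_X E$; this is the sign convention that produces $[-1]$. Integrating out $E'$ first then reduces everything to computing the kernel
\[
K:=\pr_{E!}(m^*\CL)\in D(E),
\]
where $\pr_E:E\X_XE'\to E$. Indeed, $F'FA\simeq \mathrm{sub}_!\bigl(\pr_1^*A\OX \pr_2'^*\ldots\bigr)$. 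More concretely, $F'FA\simeq q_{3!}(q_1^*A\OX s^*K)$ with $s(e_1,e_3)=e_1+e_3$.

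The key step is to show $K\simeq \iota_{E!}\1_X\Tmd$, where $\iota_E$ is the zero section. Everything is compatible with base change on $X$, and the statement is local on $X$. So we may work Zariski locally where $E\simeq \AAA^d_X$. By Künneth (Lemma \ref{lem_kunneth}) and additivity of the pairing in coordinates, $m^*\CL\simeq\boxtimes_i \mu^*\CL$ with $\mu:\AAA^1\X\AAA^1\to\AAA^1$ the multiplication. This reduces us to $d=1$ and $X=\Spec k$: we must show $\pr_{1!}\mu^*\CL\simeq \iota_!\1\langle -1\rangle$ on $\AAA^1$.

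For $d=1$, I would argue as follows.
\begin{itemize}
\item On $\Gm\subset\AAA^1$, the restriction vanishes. Base-changing to a point $a$ gives $p_!([a]^*\CL)$. The twist $[a]^*\CL$ is again a nontrivial Artin–Schreier motive, as in Lemma \ref{lem__1_L}, and Lemma \ref{lem_coho_L_zero} kills its cohomology. To do this uniformly over $\Gm$, one uses the scaling family together with the conservativity of Lemma \ref{lem_point_conservative}.
\item At the origin, the fibre is $p_!\1_{\AAA^1}\simeq\1\langle-1\rangle$.
\end{itemize}
The localisation triangle $j_!j^*\to\id\to \iota_*\iota^*$ then gives the identification with $\iota_!\1\langle-1\rangle$.

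Substituting $K$ back, the convolution with $\iota_{E!}\1$ along $s$ restricts to the antidiagonal $e_1=-e_3$. This gives $F'FA\simeq [-1]^*A\Tmd$. Naturality in $A$ is clear, since every step is a natural isomorphism of functors.

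The main obstacle is the vanishing on $\Gm$ in the family version. Pointwise vanishing holds, but $\pr_{1!}$ over a non-closed point requires care. I would first try base change to arbitrary points $\Spec K\to\Gm$, using Lemma \ref{lem_point_conservative}. Over such a point the fibre is $p_{K!}$ of a sheaf isomorphic to a nontrivial $\CL_i$, pulled back via the scaling $x\mapsto ax$, $a\in K^\times$. The proof of Lemma \ref{lem_coho_L_zero} is formal: it rests on $f_*f^*\1$ decomposing and $p_{K!}f_!\1\simeq p_{K!}\1$, where $f$ is the scaled Artin–Schreier cover $t^p-t=ax$, and that cover is still an $\AAA^1$. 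So the argument transfers verbatim over $K$.

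A secondary point is ensuring the local trivialisations glue, i.e. that the isomorphism is canonical. I would handle this by constructing the map $K\to\iota_{E!}\1\Tmd$ globally. This map comes from adjunction, via $\iota_E^!K\simeq p_!\1_{E'}$ and the trace $p_!\1_{E'}\langle d\rangle\to\1$. We then only check that it is an isomorphism locally.
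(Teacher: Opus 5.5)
Your proposal is correct and is essentially the paper's argument. You write $F'F$ as an integral transform, use additivity and proper base change to reduce to $\pr_!m^*\CL\simeq\iota_{E*}\1_X\Tmd$, construct that map globally, and check it is an isomorphism pointwise via K\"unneth and Lemma~\ref{lem_coho_L_zero}. The paper skips your trivialisation and reduction to $d=1$: it checks directly at each point $v$ off the zero section, choosing coordinates on $E'_v$ so that $m$ becomes a projection. Your scaled vanishing needs no extra work, since $[a]$ is an automorphism of $\AAA^1_K$ over $K$.

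One slip to fix: a map $K\to\iota_{E*}\1_X\Tmd$ is adjoint to a map $\iota_E^*K\to\1_X\Tmd$. So you should use $\iota_E^*K\simeq\pi_{E'!}\1_{E'}$, which follows from proper base change and $\CL|_0\simeq\1$. You should not use $\iota_E^!K$: for it the adjunction goes the wrong way, and no base change isomorphism is available.
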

\begin{proof}
    The composition $F'F: D(E)\rightarrow D(E)$ is the integral transform with kernel $\pr_{13,!}(\pr_{12}^*m^*\CL\OX\pr_{23}^*m^*\CL)$, where the maps are as in the following diagram:
\[\begin{tikzcd}
	&& {E\X_X E'\X_X E} && \\
	& {E\X_X E'} & {E\X_X E} & {E'\X_X E} \\
	E && {E'} && E
	\arrow["{\pr_{12}}"', from=1-3, to=2-2]
	\arrow["{\pr_{13}}", from=1-3, to=2-3]
	\arrow["{\pr_{23}}", from=1-3, to=2-4]
	\arrow[from=2-2, to=3-1]
	\arrow[from=2-2, to=3-3]
	\arrow[curve={height=-6pt}, from=2-3, to=3-1]
	\arrow[curve={height=6pt}, from=2-3, to=3-5]
	\arrow[from=2-4, to=3-3]
	\arrow[from=2-4, to=3-5]
\end{tikzcd}\]
It suffices to provide an isomorphism between this sheaf and $\delta^a_*\1_{\Delta^a}\Tmd$, where $\delta^a: \Delta^a(\simeq E)\hookrightarrow E\X_XE: v\mapsto (v,-v)$ is the anti-diagonal map. Consider the following two diagrams:
\[\begin{tikzcd}
	& {E\X_X E'\X_X E} &&&& \\
	& {\AAA^1\X\AAA^1} && {E\X_X E'\X_X E} & {E\X_XE'} & {\AAA^1} \\
	{\AAA^1} & {\AAA^1} & {\AAA^1} & {E\X_X E} & E
	\arrow["\alpha", from=1-2, to=2-2]
	\arrow["{m_{12}}"', from=1-2, to=3-1]
	\arrow["{m_{23}}", from=1-2, to=3-3]
	\arrow["{\pr_1}", from=2-2, to=3-1]
	\arrow["{\mathrm{add}}", from=2-2, to=3-2]
	\arrow["{\pr_2}"', from=2-2, to=3-3]
	\arrow["{\mathrm{add}\X \id}", from=2-4, to=2-5]
	\arrow[""{name=0, anchor=center, inner sep=0}, "{\pr_{13}}", from=2-4, to=3-4]
	\arrow["m", from=2-5, to=2-6]
	\arrow[""{name=1, anchor=center, inner sep=0}, "\pr", from=2-5, to=3-5]
	\arrow["{\mathrm{add}}", from=3-4, to=3-5]
	\arrow["\square"{description, pos=0.55}, draw=none, from=0, to=1]
\end{tikzcd}\]
Here $\mathrm{add}\X \id$ denotes $(v_1,w,v_2)\mapsto (v_1+v_2,w)$, and $\alpha$ is the map induced by the two pairings $m_{12}$ and $m_{23}$. Note $\mathrm{add}\circ\alpha=m\circ(\mathrm{add}\X \id)$. By the additivity of the Artin–Schreier motive (Proposition \ref{prop_L_additive}) and proper base change, $\pr_{13,!}(\pr_{12}^*m^*\CL\OX\pr_{23}^*m^*\CL)\simeq \mathrm{add}^*\pr_!m^*\CL$. So it suffices to provide an isomorphism $\iota_{E*}\1_{X}\Tmd\simeq\pr_!m^*\CL$, where $\iota_{E}: 0_E(=X)\hookrightarrow E$ is the zero-section of $E$.\\\\
We first provide a map. Let $i$ be the closed immersion $0_E\X_XE'\hookrightarrow E\X_XE'$. Remark \ref{rmk_stalk_1} gives a canonical isomorphism $i_0^*\CL\simeq \1_0$, where $i_0$ is the inclusion of $0:=\Spec(k[x]/x)$ in $\AAA^1$.  Via $i^*m^*$, this induces $i^*m^*\CL\simeq\1_{0_E\X_XE'}$. By adjunction, we get $m^*\CL\rightarrow i_{*}\1_{0_E\X_XE'}$. Apply $\pr_!$, we get $\pr_!m^*\CL\rightarrow \pr_! i_*\1_{0_E\X_XE'}\simeq  \iota_{E*}\pr_!\1_{0_E\X_XE'}\simeq \iota_{E*}\1_{X}\Tmd$.\\\\
To see this is an isomorphism, it suffices to check at every point $i_v: v\rightarrow E$ (Lemma \ref{lem_point_conservative}). If $i_v(v)\notin0_E$, then $v\X_X E'$ can be written as a product $\AAA^1_{k(v)}\X_{k(v)} \AAA_{k(v)}^{d'-1}$ with $m|_{v\X_X E'}$ factoring through the projection to $\AAA^1_{k(v)}$, so $(\pr_!m^*\CL)|_v=0$ by the Künneth formula and Lemma \ref{lem_coho_L_zero}; if $i_v(v)\in0_E$, then $(\pr_!m^*\CL)_v\simeq \pr_{v!}((m^*\CL)|_{v\X_X E'})\simeq \pr_{v!}\1_{v\X_X E'}\simeq \1_v\Tmd$. This completes the proof.
\end{proof}

\begin{remark}\label{rmk_sign_classical}
    The same construction applied to $E'$ gives $FF'\simeq [-1]^*\Tmd$. Denote $F'F\isoto [-1]^*\Tmd$ by $\eta_E$ and $FF'\isoto [-1]^*\Tmd$ by $\eta_{E'}$. Then the following diagrams commute: 
\[\begin{tikzcd}
	F & {F\circ F'\circ F[-1]^*\Td} & F & {F'} & {F'\circ F\circ F'[-1]^*\Td} & {F'}
	\arrow["{(-1)^d}", curve={height=24pt}, from=1-1, to=1-3]
	\arrow["{F\circ \eta_{E}}"', from=1-2, to=1-1]
	\arrow["\sim", from=1-2, to=1-1]
	\arrow["{\eta_{E'}\circ F}", from=1-2, to=1-3]
	\arrow["\sim"', from=1-2, to=1-3]
	\arrow["{(-1)^d}", curve={height=24pt}, from=1-4, to=1-6]
	\arrow["{F'\circ\eta_{E'}}"', from=1-5, to=1-4]
	\arrow["\sim", from=1-5, to=1-4]
	\arrow["\sim"', from=1-5, to=1-6]
	\arrow["{\eta_{E}\circ F'}", from=1-5, to=1-6]
\end{tikzcd}\]
where $(-1)^d$ denotes multiplication by $(-1)^d$.
\end{remark}
\begin{proof}
We sketch a proof of the commutativity of the left diagram (the right one follows, by switching $E$ and $E'$). First one shows it must commute up to a scalar (this is proved in a more general context in Remark \ref{rmk_inv_datum_basics}.3), so it suffices to do the computation with respect to the object $\1_E$. As the Fourier transform preserves box products (by the Künneth formula and the additivity of the Artin–Schreier motive) and commutes with base change (Lemma \ref{lem_F_base_change}.2), the question reduces to a computation for $E=\AAA^1_{X}$, for $X$ a geometric point. We want to show the diagram commutes in this case (so $d=1$). Use coordinates $x$, $u$, $y$, and $v$ on $E$, $E'$, $E''(\simeq E)$ and $E'''(\simeq E')$, and denote them by $\AAA^1_x$, $\AAA^1_u$, $\AAA^1_y$, and $\AAA^1_v$, respectively. Denote by $\CL_{xu}$ the pullback of $\CL$ to $\AAA^2_{xu}:=\AAA^1_x\X_X\AAA^1_u$ under the pairing, and similarly for $\CL_{uy}$ and $\CL_{yv}$. Then $F\circ\eta_E$ corresponds to an isomorphism $p_{xv!}(p_{xu}^*\CL_{xu}\otimes p_{uy}^*\CL_{uy}\otimes p_{yv}^*\CL_{yv})\simeq q_{u!}p_{xuv!}(p_{xu}^*\CL_{xu}\otimes p_{uy}^*\CL_{uy}\otimes p_{yv}^*\CL_{yv})\simeq q_{u!}(\CL_{xu}\otimes \Delta^a_{uv}\langle -1\rangle)\simeq \CL_{-xv}\langle -1\rangle$, where the maps are as in the following diagram, $\Delta^a_{uv}$ is the $*$-extension of the unit on the anti-diagonal, and $\CL_{-xv}$ is the pullback under the negative pairing. Similarly for $\eta_{E'}\circ F$. We want to show that the two isomorphisms $p_{xv!}(p_{xu}^*\CL_{xu}\otimes p_{uy}^*\CL_{uy}\otimes p_{yv}^*\CL_{yv})\simeq\CL_{-xv}\langle -1\rangle$ differ by $-1$. 
\[\begin{tikzcd}
	&&&& {\AAA^4_{xuyv}} &&&& \\
	&&& {\AAA^3_{xuv}} && {\AAA^3_{xyv}} \\
	&& {\AAA^3_{xuy}} &&&& {\AAA^3_{uyv}} \\
	{\AAA^2_{xu}} && {\AAA^2_{xy}} && {\AAA^2_{uy}} && {\AAA^2_{uv}} && {\AAA^2_{yv}} \\
	&&&& {\AAA^2_{xv}}
	\arrow["{p_{xuv}}", from=1-5, to=2-4]
	\arrow["{p_{xyv}}"', from=1-5, to=2-6]
	\arrow["{p_{xu}}"{description}, curve={height=18pt}, from=1-5, to=4-1]
	\arrow["{p_{uy}}"{description}, from=1-5, to=4-5]
	\arrow["{p_{yv}}"{description}, curve={height=-18pt}, from=1-5, to=4-9]
	\arrow["{p_{xv}}"{description}, curve={height=-18pt}, from=1-5, to=5-5]
	\arrow["{q_{u}}"{pos=0.3}, from=2-4, to=5-5]
	\arrow["{q_{y}}"'{pos=0.3}, from=2-6, to=5-5]
	\arrow[curve={height=6pt}, from=3-3, to=4-1]
	\arrow[from=3-3, to=4-3]
	\arrow[curve={height=-12pt}, from=3-3, to=4-5]
	\arrow[curve={height=12pt}, from=3-7, to=4-5]
	\arrow[from=3-7, to=4-7]
	\arrow[curve={height=-6pt}, from=3-7, to=4-9]
\end{tikzcd}\]
The scalar can be computed at the origin $x=v=0$ in $\AAA^2_{xv}$. One then checks that the question is reduced to the following: consider the following diagram:
\[\begin{tikzcd}
	& {\AAA_{uy}^{2}} & \\
	{\AAA_u^{1}} && {\AAA_y^{1}} \\
	& X
	\arrow["{\pi_u}"', from=1-2, to=2-1]
	\arrow["{\pi_y}", from=1-2, to=2-3]
    \arrow["\pi", from=1-2, to=3-2]
	\arrow["q_u"', from=2-1, to=3-2]
	\arrow["q_y", from=2-3, to=3-2]
\end{tikzcd}\]
Corresponding to $\eta_E$ and $\eta_{E'}$ are the isomorphisms $\pi_!\CL_{uy}\simeq q_{u!}\pi_{u!}\CL_{uy}\simeq \1_X\langle -1\rangle$ and $\pi_!\CL_{uy}\simeq q_{y!}\pi_{y!}\CL_{uy}\simeq \1_X\langle -1\rangle$. We need to show they differ by $-1$.\\\\
Denote by $\sigma$ the automorphism $(u,y)\mapsto (y,u)$ of $\AAA^2_{uy}$. This is an automorphism over the pairing $m: \AAA^2_{uy}\rightarrow\AAA^1_X$, so equips $m_!m^*\CL$ with a $\sigma$-action. The assertion is equivalent to that the induced action on $q_!m_!m^*\CL\simeq\1_X\langle -1\rangle$ is $-1$, where $q$ is the map $\AAA^1_X\rightarrow X$. The projection formula gives $m_!m^*\CL\simeq (m_!m^*\1_{\AAA^1_X})\otimes \CL$. Let's first study $m_!m^*\1_{\AAA^1_X}$. Let $j: \AAA^1_X\backslash0\rightarrow\AAA^1_X$ and $i: 0\hookrightarrow \AAA^1_X$ be the immersions. Over $\AAA^1_X\backslash0$, $m$ is a trivial $\Gm$-torsor, so $j^*m_!m^*\1_{\AAA^1_X}\simeq \1_{\AAA^1_X\backslash0}[-1]\oplus \1_{\AAA^1_X\backslash0}\langle -1\rangle$; the $\sigma$-action is $-1$ on the first term, and $1$ on the second. Over $0$, $i^*m_!m^*\1_{\AAA^1_X}\simeq \1_0[-1]\oplus \1_0^{\oplus 2}\langle -1\rangle$; the $\sigma$-action is $-1$ on the first term\footnote{This can be seen, for example, by looking at the exact triangle $\1_C\rightarrow\1_{C_1}\oplus \1_{C_2}\xrightarrow{r_1-r_2} \1_0\rightarrow$, where $C$ denotes $m^{-1}(0)$, $C_1$, $C_2$ are the two irreducible components, and $r_i$ is the unit map from the adjunction. Geometrically, the $-1$ comes from the fact that a non-trivial $1$-cycle of $C$ relative to the two infinity points is reversed when the two infinity points are switched. I thank Zhiwei Yun for teaching me this perspective.}, and switching factors on the second. Now consider the canonical map $m_!m^*\1_{\AAA^1_X}\simeq m_!m^! \1_{\AAA^1_X}\langle -1\rangle \rightarrow \1_{\AAA^1_X}\langle -1\rangle$. As $\sigma$ acts by $1$ on the right hand side, one sees that, over $\AAA^1_X\backslash0$ and $0$, the map is an isomorphism when restricted to $\sigma$-acting-by-$1$ parts of the left hand side (note that over $0$, $\1_0^{\oplus 2}\langle -1\rangle$ decomposes into a $1$-part and $(-1)$-part). It then follows that $\sigma$ must act by $-1$ on $q_!((m_!m^*\1_{\AAA_X^1})\otimes \CL)$, using Lemma \ref{lem_coho_L_zero}.
\end{proof}

By Remark \ref{rmk_sign_classical}, in the following, we will view $F'[-1]^*\Td$ as the right adjoint of $F$ with unit $(-1)^d\eta^{-1}_E: \id_E\isoto F'F[-1]^*\Td$ and counit $\eta_{E'}: FF'[-1]^*\Td\isoto\id_{E'}$, or the left adjoint of $F$ with unit $\eta^{-1}_{E'}$ and counit $(-1)^d\eta_E$. On the other hand, we can compute the right adjoint of $F$ formally:
\begin{align*}
p_*\RHOM(m^*\CL_1,p'^{!}(-))
&\simeq p_*((m^*\CL_1)^{\vee}\OX p'^{!}(-))\,\,(\text{$m^*\CL_1$ is dualisable, $(-)^{\vee}$ denotes $\RHOM((-),\1)$})\\
&\simeq p_*(m^*\CL_{p-1}\OX p'^{!}(-))\,\,(\text{$(-)^{\vee}$ commutes with pullback, and Lemma \ref{lem_L_dual}})\\
&\simeq p_*(m^*[-1]^*\CL_{1}\OX p'^{!}(-))\,\,(\text{Lemma \ref{lem__1_L}})\\
&\simeq [-1]^*p_*(m^*\CL_{1}\OX p'^{!}(-))
\end{align*}

So we have a natural isomorphism $F'\simeq p_*(m^*\CL_{1}\OX p'^{!}(-))\Tmd$. As $p'$ is smooth, we get a natural isomorphism $F'\simeq p_*(m^*\CL_{1}\OX p'^{*}(-))$. We summarise this discussion:

\begin{lemma}[Right adjoint]\label{lem_miracle}
    There are natural isomorphisms $F\simeq p'_*(p^{!}(-)\OX m^*\CL)\Tmd\simeq p'_*(p^*(-)\OX m^*\CL)$.
\end{lemma}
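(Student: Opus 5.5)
The plan is to obtain $F$ as the right adjoint of $F'[-1]^*\Td$ in two different ways and compare, essentially reversing the roles in the computation just made before the statement. By Remark \ref{rmk_sign_classical} applied with the roles of $E$ and $E'$ exchanged, $F'[-1]^*\Td$ is an equivalence with inverse $F$. More precisely, $F$ is right adjoint to $F'[-1]^*\Td$, with unit $\eta_{E'}^{-1}$ and counit $(-1)^d\eta_E$.

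On the other hand, I compute the right adjoint of $F'[-1]^*\Td=[-1]^*p_!(p'^*(-)\OX m^*\CL)\Td$ formally. The right adjoint of $[-1]^*$ is $[-1]^*$, and the right adjoint of $p_!$ is $p^!$. The right adjoint of $(-)\OX m^*\CL$ is $\RHOM(m^*\CL,-)\simeq (m^*\CL)^\vee\OX(-)$, using that $m^*\CL$ is dualisable because $\CL\in D_{\mathrm{lis}}$. The right adjoint of $p'^*$ is $p'_*$. This gives the right adjoint $p'_*\big(m^*\CL^\vee\OX p^![-1]^*(-)\big)\Tmd$. By Lemma \ref{lem_L_dual} and Lemma \ref{lem__1_L}, $m^*\CL^\vee\simeq m^*\CL_{p-1}\simeq m^*[-1]^*\CL$.

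The next step is to absorb the sign. The automorphism $[-1]$ of $E$ (acting on the first factor of $E\X_XE'$) commutes with $p'$ and intertwines $m$ with $[-1]\circ m$. Using base change for this automorphism, the two copies of $[-1]$ cancel, giving $p'_*(p^!(-)\OX m^*\CL)\Tmd$. Uniqueness of adjoints then yields the first isomorphism $F\simeq p'_*(p^!(-)\OX m^*\CL)\Tmd$. This part is exactly parallel to the display preceding the statement, with primes swapped.

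For the second isomorphism, I use that $p$ is smooth of relative dimension $d'=d$, so the Gysin/purity isomorphism gives $p^!\simeq p^*\Td$. Substituting cancels the twist and gives $p'_*(p^*(-)\OX m^*\CL)$. The main point requiring care is the sign bookkeeping: one must check that the $[-1]$'s genuinely cancel rather than leaving $F$ composed with $[-1]^*$. The convention $\CL_{p-1}=[-1]^*\CL$ together with the symmetry of the construction handles this. No step seems hard, since naturality is automatic from uniqueness of adjoints.
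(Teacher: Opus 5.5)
Your proposal is correct and is essentially the paper's argument: identify $F$ as an adjoint of $F'[-1]^*\Td$ via involutivity, compute that adjoint formally using the dualisability of $m^*\CL$ and $\CL^\vee\simeq\CL_{p-1}\simeq[-1]^*\CL$ (cancelling the two copies of $[-1]$), then use $p^!\simeq p^*\Td$ for the smooth map $p$. The only cosmetic difference is that the paper computes the right adjoint of $F$, obtains a formula for $F'$, and then swaps $E$ and $E'$, whereas you compute the right adjoint of $F'[-1]^*\Td$ and land on $F$ directly.
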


\begin{lemma}[Verdier duality]
    There is a natural isomorphism $\DD_{E'/X}F\simeq (F\DD_{E/X})[-1]^*\Td$.
\end{lemma}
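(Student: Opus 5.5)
We compute $\DD_{E'/X}FA$ directly from the definition, using the standard exchange of Verdier duality with the six operations, and then rewrite the result with Lemma \ref{lem_miracle}. Here $\DD_{E/X}:=\RHOM(-,\pi_E^!\1_X)$, and similarly for $E'$; we also write $\DD_{E\X_XE'/X}$ for duality on the product. Put $\pi:=\pi_E\circ p=\pi_{E'}\circ p'$. Since $p'$ is smooth of relative dimension $d$, we have $\pi^!\1_X\simeq p'^!\pi_{E'}^!\1_X\simeq p'^*\pi_{E'}^!\1_X\Td$, and likewise $\pi^!\1_X\simeq p^*\pi_E^!\1_X\langle d'\rangle$.

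The first step is the formula $\DD_{E'/X}\,p'_!(-)\simeq p'_*\,\DD_{E\X_XE'/X}(-)$. This follows from the adjunction $p'_!\dashv p'^!$ together with $p'^!\pi_{E'}^!\simeq\pi^!$. Applied to $FA=p'_!(p^*A\OX m^*\CL)$ it gives
$$\DD_{E'/X}FA\simeq p'_*\RHOM(p^*A\OX m^*\CL,\pi^!\1_X).$$

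Next we use that $m^*\CL$ is dualisable with dual $m^*\CL_{p-1}\simeq m^*[-1]^*\CL$, by Lemmas \ref{lem_L_dual} and \ref{lem__1_L}. Hence
$$\RHOM(p^*A\OX m^*\CL,\pi^!\1_X)\simeq \RHOM(p^*A,\pi^!\1_X)\OX m^*[-1]^*\CL.$$
Because $p$ is smooth, $p^*\RHOM(A,B)\simeq\RHOM(p^*A,p^*B)$ holds for smooth pullback (equivalently, $p^!$ commutes with $\RHOM$ up to twist). This gives $\RHOM(p^*A,\pi^!\1_X)\simeq p^*(\DD_{E/X}A)\langle d'\rangle$. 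Moreover $m\circ([-1]\X\id)=[-1]\circ m$, so the $[-1]$ acting on the kernel can be moved onto the $E$ factor. Combining these,
$$\DD_{E'/X}FA\simeq p'_*\bigl(p^*[-1]^*\DD_{E/X}A\OX m^*\CL\bigr)\langle d'\rangle,$$
where $[-1]$ commutes with $\DD_{E/X}$ because it is an automorphism over $X$.

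By the second isomorphism of Lemma \ref{lem_miracle}, $F\simeq p'_*(p^*(-)\OX m^*\CL)$. Therefore
$$\DD_{E'/X}FA\simeq F([-1]^*\DD_{E/X}A)\langle d\rangle\simeq (F\DD_{E/X}A)[-1]^*\Td,$$
using $d'=d$ and $F[-1]^*\simeq[-1]^*F$, which follows from $m([-1]v,w)=m(v,[-1]w)$ and base change along the automorphism $[-1]$ of $E'$. Here $(F\DD_{E/X})[-1]^*$ is read as the composite, so the expression agrees with $F\DD_{E/X}[-1]^*\Td$.

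The main obstacle is sign and twist bookkeeping: making sure the single $[-1]$ produced by $\CL^\vee\simeq[-1]^*\CL$ lands on the correct side, and that the twist comes out as $\Td$ rather than $\Tmd$. Both are settled by using the ``miracle'' form $p'_*(p^*(-)\OX m^*\CL)$ of Lemma \ref{lem_miracle}, which has no extra twist, instead of the $p^!$ form.
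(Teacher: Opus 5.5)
Your argument is correct and follows essentially the paper's computation: the adjunction $p'_!\dashv p'^!$, then dualisability of $m^*\CL$ with dual $m^*[-1]^*\CL$, then Lemma \ref{lem_miracle}. The one difference is that you invoke smoothness of $p$ (Poincar\'e duality and $p^*\RHOM(A,B)\simeq\RHOM(p^*A,p^*B)$) and the second form of Lemma \ref{lem_miracle}, whereas the paper uses the general identity $\RHOM(p^*A,p^!B)\simeq p^!\RHOM(A,B)$ with the first form $F\simeq p'_*(p^!(-)\OX m^*\CL)\Tmd$; the paper's variant never needs $p$ to be smooth, so it carries over verbatim to the derived restatement in \S\ref{subsec_F_properties_d}, whereas your route covers only the case where $E$ has tor-amplitude in $[0,+\infty)$.
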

\begin{proof}
    We compute, for every $A\in D(E)$:
\begin{align*}
\DD_{E'/X}FA
&=\RHOM(p'_!(p^*A\OX m^*\CL_1), \pi_{E'}^!\1_X)\\
&\simeq p'_*\RHOM(p^*A\OX m^*\CL_1, p'^!\pi_{E'}^!\1_X)\\
&\simeq p'_*\RHOM(m^*\CL_1, \RHOM(p^*A,p'^!\pi_{E'}^!\1_X))\\
&\simeq p'_*((m^*\CL_1)^{\vee}\OX \RHOM(p^*A,p^!\pi_E^!\1_X))\\
&\simeq [-1]^*p'_*(m^*\CL_1\OX p^!\DD_{E/X}A)\simeq (F\DD_{E/X}A)[-1]^*\Td
\end{align*}
where in the last step we used the first isomorphism in Lemma \ref{lem_miracle}.
\end{proof}

\begin{lemma}[Base change]\label{lem_F_base_change}
    Let $f: \widetilde{X}\rightarrow X$ be a map locally of finite type of finite dimensional schemes over $k$, and $E\rightarrow X$ be a vector bundle. Consider the following diagram, where each square is Cartesian:
\[\begin{tikzcd}
	& {\widetilde{E}\times_X\widetilde{E}'} && {E\times_XE'} & \\
	&& {\widetilde{E}'} && {E'} \\
	{\widetilde{E}} && E \\
	& {\widetilde{X}} && X
	\arrow[from=1-2, to=1-4]
	\arrow[from=1-2, to=2-3]
	\arrow[from=1-2, to=3-1]
	\arrow[dotted, from=1-2, to=4-2]
	\arrow[from=1-4, to=2-5]
	\arrow[from=1-4, to=3-3]
	\arrow[dotted, from=1-4, to=4-4]
	\arrow["{f}"{pos=0.7}, dotted, from=2-3, to=2-5]
	\arrow[dotted, from=2-3, to=4-2]
	\arrow[from=2-5, to=4-4]
	\arrow["{f}"{pos=0.7}, from=3-1, to=3-3]
	\arrow[from=3-1, to=4-2]
	\arrow[from=3-3, to=4-4]
	\arrow["f", from=4-2, to=4-4]
\end{tikzcd}\]
There are natural isomorphisms:
$$ (1)\,\, Ff_!\simeq f_!\tF,\,\,\,\, (2)\,\, f^*F\simeq \tF f^*,\,\,\,\, (3)\,\, f^!F\simeq \tF f^!,\,\,\,\, (4)\,\, Ff_*\simeq f_*\tF.$$
\end{lemma}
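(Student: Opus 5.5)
We prove (1) and (2) directly from proper base change, the projection formula and the compatibility of $\OX$ with $*$-pullback. We then deduce (3) and (4) from these by passing to adjoints, using Remark \ref{rmk_sign_classical}: $F'[-1]^*\Td$ is both left and right adjoint of $F$ (and similarly for $\tF$).

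For (1), let $A\in D(\tE)$ and write $\tp,\tp',\widetilde m$ for the maps on $\tE\X_{\widetilde X}\tE'$. Both the square formed by $f\colon\tE\X_{\widetilde X}\tE'\to E\X_XE'$ over $f\colon\tE\to E$, and the one over $f\colon\tE'\to E'$, are Cartesian. Moreover $m\circ f=\widetilde m$. Proper base change gives $p^*f_!A\simeq f_!\tp^*A$. The projection formula gives $f_!\tp^*A\OX m^*\CL\simeq f_!(\tp^*A\OX f^*m^*\CL)=f_!(\tp^*A\OX\widetilde m^*\CL)$. Applying $p'_!$ and using $p'_!f_!\simeq f_!\tp'_!$ yields $Ff_!A\simeq f_!\tF A$.

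For (2), proper base change for the Cartesian square over $E'$ gives $f^*p'_!\simeq\tp'_!f^*$. Then $f^*(p^*B\OX m^*\CL)\simeq\tp^*f^*B\OX\widetilde m^*\CL$, since $*$-pullback is symmetric monoidal and commutes with composition. Hence $f^*F\simeq\tF f^*$.

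For (3) and (4), take right adjoints. From (1), the right adjoint of $Ff_!$ is $f^!\circ(\text{right adjoint of }F)$, which is $f^!F'[-1]^*\Td$. The right adjoint of $f_!\tF$ is $\tF'[-1]^*\Td f^!$. Since $[-1]^*$ and the twist commute with $f^!$ (the map $[-1]$ is compatible with base change and $\Td$ is a Tate twist with $\tilde d=d\circ f$), we get $f^!F'\simeq\tF'f^!$. Applying this with the roles of $E$ and $E'$ exchanged (using $E''=E$) gives (3). Similarly, the right adjoint of $f^*F$ is $F'[-1]^*\Td f_*$ and that of $\tF f^*$ is $f_*\tF'[-1]^*\Td$. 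So (2) gives $F'f_*\simeq f_*\tF'$, and exchanging roles gives (4).

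The main point needing care is this adjoint step. One must check that $[-1]^*$ commutes with $f^!$ and $f_*$, which holds because $[-1]$ is an automorphism compatible with $f$, and that the twist by $\langle d\rangle$ is pulled back correctly. Alternatively, (3) and (4) can be obtained directly using Lemma \ref{lem_miracle}: write $F\simeq p'_*(p^!(-)\OX m^*\CL)\Tmd$ and apply the $!$-versions of base change ($f^!p'_*\simeq\tp'_*f^!$, $f_*$ composing) together with $f^!(B\OX m^*\CL)\simeq f^!B\OX\widetilde m^*\CL$, which holds since $m^*\CL$ is dualisable. Either route suffices; the adjunction route avoids this dualisability argument.
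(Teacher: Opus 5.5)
Your proposal is correct and follows the paper's proof. The paper also gets (1) and (2) from proper base change and the projection formula, and obtains (3) and (4) by taking right adjoints of (1) and (2) and relabelling. Your version makes that relabelling explicit: you use $F'[-1]^*\Td$ as the right adjoint of $F$, commute $[-1]^*$ and the twist past $f^!$ and $f_*$, and then exchange the roles of $E$ and $E'$.
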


\begin{proof}
    (1) and (2) are formal consequences of proper base change and the projection formula, using the above diagram. (3) and (4) are obtained from (1) and (2), respectively, by taking right adjoints and relabelling.
\end{proof}


\begin{lemma}[Functoriality]\label{lem_F_functoriality}
    Let $\alpha: \tE\rightarrow E$ be a map of vector bundles over a finite dimensional scheme $X$ over $k$. Consider the following diagram:
\[\begin{tikzcd}[row sep=0.8cm, column sep=0.5cm]
	& {\widetilde{E}'} && {E'} & \\
	{\widetilde{E}\times_X\widetilde{E}'} && {\widetilde{E}\times_X E'} && {E\times_X E'} \\
	& {\widetilde{E}} && E
	\arrow["{\alpha'}"', from=1-4, to=1-2]
	\arrow[from=2-1, to=1-2]
	\arrow[from=2-1, to=3-2]
	\arrow["\square"{description, pos=0.4}, draw=none, from=2-3, to=1-2]
	\arrow[from=2-3, to=1-4]
	\arrow[from=2-3, to=2-1]
	\arrow[from=2-3, to=2-5]
	\arrow[from=2-3, to=3-2]
	\arrow["\square"{description}, draw=none, from=2-3, to=3-4]
	\arrow[from=2-5, to=1-4]
	\arrow[from=2-5, to=3-4]
	\arrow["\alpha"', from=3-2, to=3-4]
\end{tikzcd}\]
There are natural isomorphisms:
    $$ (1)\,\, F\alpha_!\simeq\alpha'^*\tF,\,\,\,\, (2)\,\, \alpha'_!F\Td\simeq \tF\alpha^*\Tdt, \,\,\,\, (3)\,\, F\alpha_*\Td\simeq\alpha'^!\tF\Tdt,\,\,\,\, (4)\,\, \alpha'_*F\simeq\tF\alpha^!.$$
\end{lemma}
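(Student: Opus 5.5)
We prove (1) directly; (2) and (4) then follow from (1) by passing to adjoints and using involutivity, and (3) is the right adjoint of (2).

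\textbf{Proof of (1).} Write $\widetilde m$ for the pairing on $\tE\X_X\tE'$ and $m$ for the pairing on $E\X_XE'$. Consider the middle term $\tE\X_XE'$ of the diagram. It maps to $\tE\X_X\tE'$ by $\id\X\alpha'$ and to $E\X_XE'$ by $\alpha\X\id$. The two pairings agree there: $\widetilde m\circ(\id\X\alpha')=m\circ(\alpha\X\id)$, since $\langle v,\alpha'w\rangle=\langle\alpha v,w\rangle$. Call this common map $\mu:\tE\X_XE'\to\AAA^1$.

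For $A\in D(\tE)$, the right square is Cartesian, so proper base change gives $p^*\alpha_!A\simeq(\alpha\X\id)_!q^*A$, where $q$ is the projection to $\tE$. The projection formula then gives
$$F\alpha_!A\simeq p'_!(\alpha\X\id)_!(q^*A\OX\mu^*\CL).$$
On the other side, proper base change for the left Cartesian square, followed by the projection formula, identifies $\alpha'^*\tF A=\alpha'^*\widetilde p'_!(\widetilde p^*A\OX\widetilde m^*\CL)$ with $q'_!(q^*A\OX\mu^*\CL)$, where $q':\tE\X_XE'\to E'$. Since $p'\circ(\alpha\X\id)=q'$, the two expressions agree. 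So (1) is purely formal.

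\textbf{Proof of (2).} Take left adjoints of both sides of (1) for the dual map $\alpha':E'\to\tE'$, whose dual is $\alpha$. This gives $F'\alpha'_!\simeq\alpha^*\tF'$ as functors $D(E')\to D(\tE)$. We use the adjunction description recorded after Remark \ref{rmk_sign_classical}: the left adjoint of $F$ is $F'[-1]^*\Td$. The left adjoint of $\alpha'^*$ is $\alpha'_!$ only when $\alpha'$ is smooth, so this route needs care. We avoid that issue by conjugating with involutivity instead.

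Precompose (1), in the form $F'\alpha'_!\simeq\alpha^*\tF'$, with $\tF$, and postcompose with $F$. Proposition \ref{prop_involutivity} gives $\tF'\tF\simeq[-1]^*\langle-\td\rangle$, so $\alpha^*\tF'\tF\simeq\alpha^*[-1]^*\langle-\td\rangle$. Likewise $FF'\simeq[-1]^*\Tmd$. Applying $F$ to both sides therefore yields
$$[-1]^*\Tmd\,\alpha'_!\tF\simeq F\alpha^*[-1]^*\langle-\td\rangle.$$
Here $[-1]^*$ commutes with the linear maps $\alpha^*$ and $\alpha'_!$, and $[-1]^*$ commutes with $F$ by Lemma \ref{lem__1_L}. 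Rearranging gives $\alpha'_!\tF\Td\simeq F\alpha^*\Tdt$ with the roles of $(\tE,E)$ and $(\tE',E')$ exchanged. Relabelling (swap $E$ and $E'$) gives exactly (2).

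\textbf{Proofs of (3) and (4).} For (4), take right adjoints of (1). The right adjoint of $F\alpha_!$ is $\alpha^!$ composed with the right adjoint of $F$, which is $F'[-1]^*\Td$. The right adjoint of $\alpha'^*\tF$ is $\tF'[-1]^*\Tdt$ composed with $\alpha'_*$. This gives
$$\alpha^!F'[-1]^*\Td\simeq\tF'[-1]^*\Tdt\,\alpha'_*.$$
Cancel the twists and relabel via duality, swapping $E\leftrightarrow E'$ so that $\alpha'$ plays the role of $\alpha$. The twists $\Td,\Tdt$ must cancel in a way consistent with (4) having no twist. Equivalently, (4) is the right-adjoint version of (2) read with $E\leftrightarrow E'$. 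Similarly, (3) is the right adjoint of (2): it gives $\alpha^!$-type terms in the form $F\alpha_*\Td\simeq\alpha'^!\tF\Tdt$, again after using that both adjoints of $F$ are $F'[-1]^*\Td$.

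\textbf{Main obstacle.} The only real obstacle is bookkeeping. We must track the twists $\Td$ versus $\Tdt$ and the signs $[-1]^*$ carefully through the adjunctions and the involutivity isomorphisms so that the exponents come out as stated. The geometric input is only the identity $\widetilde m\circ(\id\X\alpha')=m\circ(\alpha\X\id)$ together with base change.
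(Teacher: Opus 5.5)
Your strategy is the paper's. You get (1) from base change and the projection formula, (2) by applying (1) to the dual map and conjugating with Fourier transforms via involutivity, and the rest by taking right adjoints with $F^R\simeq F'[-1]^*\Td$. The step that fails is the paragraph on (3) and (4), where the adjunctions are paired the wrong way round.

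\textbf{(3) comes from (1).} The right adjoint of (1) is, as you display, $\alpha^!F'\Td\simeq\tF'\alpha'_*\Tdt$ after discarding $[-1]^*$. Your claim that ``the twists must cancel'' is false: they do not cancel unless $d=\td$, so this cannot be (4). Read it for the map $\alpha'\colon E'\to\tE'$, whose dual is $\alpha$, whose source has rank $d$ and whose target has rank $\td$. Then it is exactly (3).

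\textbf{(4) comes from (2).} The right adjoint of (2) is not (3). The right adjoint of $\alpha'_!F\Td$ is $\Tmd F'[-1]^*\Td\alpha'^!\simeq F'[-1]^*\alpha'^!$. The right adjoint of $\tF\alpha^*\Tdt$ is $\langle-\td\rangle\alpha_*\tF'[-1]^*\Tdt\simeq\alpha_*\tF'[-1]^*$. Here the twists do cancel, giving $F'\alpha'^!\simeq\alpha_*\tF'$, which is (4) for $\alpha'$. Since every map is the dual of its dual, this gives (4) in general. This is what the paper does: (3) from (1), and (4) from (2$'$). Your sentence ``(4) is the right-adjoint version of (2)'' is the correct one, and the claims around it contradict it.

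\textbf{Labels in (2).} Your functors in the proof of (2) do not typecheck; for instance, $F'\alpha'_!$ is not defined. Applied to $\alpha'$, (1) reads $\tF'\alpha'_!\simeq\alpha^*F'$ as functors $D(E')\to D(\tE)$. Precompose with $F$ and postcompose with $\tF$ (not the other way round). Then use $\tF\tF'\simeq[-1]^*\Tmdt$ and $F'F\simeq[-1]^*\Tmd$ to get
\[
[-1]^*\Tmdt\,\alpha'_!F\simeq\tF\alpha^*[-1]^*\Tmd .
\]
This is (2) directly, with no relabelling needed. The commutation $F[-1]^*\simeq[-1]^*F$ comes from the automorphism $[-1]\X[-1]$ of $E\X_XE'$, which preserves $m$; Lemma \ref{lem__1_L} is not needed. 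Once these labels and the (3)/(4) pairing are corrected, your argument coincides with the paper's.
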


\begin{proof}
    (1) is a formal consequence of proper base change and the projection formula, using the above diagram. Take right adjoints and relabel, we get (3). Compose (1) with $F'$ on the left and $\tF'$ on the right, and use Involutivity, we get (2$'$) $[-1]^*\Tmd\alpha_!\tF'\simeq F'\alpha'^* [-1]^*\Tmdt$. Take right adjoints, we get (4). Apply (2$'$) to $\alpha'$, we get (2).
\end{proof}

The above construction of the natural transformations makes the following Lemma apparent:

\begin{lemma}[Compatibility with units and counits]\label{lem_BC}
    The set-up is as in Lemma \ref{lem_F_functoriality}. Then we have four commutative diagrams
\[\begin{tikzcd}
	& {\tF\alpha^!\alpha_!} && {F\alpha_!\alpha^!} && {F\alpha_*\alpha^*} && {\tF\alpha^*\alpha_*} \\
	\tF && F && F && \tF \\
	& {\alpha'_*\alpha'^*\tF} && {\alpha'^*\alpha'_*F} && {\alpha'^!\alpha'_!F} && {\alpha'_!\alpha'^!\tF}
	\arrow["\simd", tail reversed, from=1-2, to=3-2]
	\arrow["{\mathrm{counit}}"', from=1-4, to=2-3]
	\arrow["\simd", tail reversed, from=1-6, to=3-6]
	\arrow["{\mathrm{counit}}"', from=1-8, to=2-7]
	\arrow["\simd", tail reversed, from=1-8, to=3-8]
	\arrow["{\mathrm{unit}}", from=2-1, to=1-2]
	\arrow["{\mathrm{unit}}"', from=2-1, to=3-2]
	\arrow["{\mathrm{unit}}", from=2-5, to=1-6]
	\arrow["{\mathrm{unit}}"', from=2-5, to=3-6]
	\arrow["\simd"', tail reversed, from=3-4, to=1-4]
	\arrow["{\mathrm{counit}}", from=3-4, to=2-3]
	\arrow["{\mathrm{counit}}", from=3-8, to=2-7]
\end{tikzcd}\]
where the vertical isomorphisms are from Functoriality.
\end{lemma}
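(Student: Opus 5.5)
We need to check the four diagrams in Lemma \ref{lem_BC} by going back to how the isomorphisms of Lemma \ref{lem_F_functoriality} were built. Isomorphism (1), $F\alpha_!\simeq\alpha'^*\tF$, is a composite of three things: the base change isomorphism for the Cartesian square with corner $\tE\X_XE'$, the projection formula, and the identification $(\alpha\X\id)^*m^*\CL\simeq(\id\X\alpha')^*\tilde m^*\CL$, which holds because the two pairings agree on $\tE\X_XE'$. Isomorphism (3) is its mate under the adjunctions $\alpha_!\dashv\alpha^!$ and $\alpha'^*\dashv\alpha'_*$. The first key point is a general fact about mates. If $\phi: F\alpha_!\isoto\alpha'^*\tF$ and $\psi$ is obtained from $\phi$ (or from $\phi^{-1}$) by passing to adjoints, then the triangle identities make the compatibility of $\psi$ with the units and counits tautological. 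So I will check each diagram by writing $\psi$ as the explicit mate and reducing to the triangle identities.

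For the first diagram, $\tF\to\tF\alpha^!\alpha_!$ and $\tF\to\alpha'_*\alpha'^*\tF$, the vertical map is built from (1) and (4). I will first check that, under the construction through Involutivity, (4) is the right adjoint of (2$'$) and (2$'$) is conjugate to (1). Then (4) in the form $\alpha'_*F\simeq\tF\alpha^!$ agrees with the mate of (1) with respect to the adjunctions $(\alpha_!,\alpha^!)$ and $(\alpha'^*,\alpha'_*)$, with $F$ and $\tF$ exchanged through their inverses $F'$ and $\tF'$. If that agreement holds, the unit diagram is exactly the mate identity: $\tF\to\alpha'_*\alpha'^*\tF\simeq\alpha'_*F\alpha_!\simeq\tF\alpha^!\alpha_!$ equals $\tF$ applied to the unit.

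The other three diagrams are handled the same way, each time with the adjunction that relates the two functoriality isomorphisms in question. The second and third diagrams pair (1) with (3): the mate relation between (1) and (3) under $(\alpha_!,\alpha^!)$, $(\alpha'^*,\alpha'_*)$ gives compatibility with the counits $\alpha_!\alpha^!\to\id$ and $\alpha'^*\alpha'_*\to\id$. The fourth diagram pairs (2) with (4) and is symmetric.

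The main obstacle is bookkeeping for (2) and (4). These were obtained by conjugating with $F'$ and $\tF'$ and invoking Involutivity. So I must verify that conjugation by the adjoint equivalence $F\dashv F'[-1]^*\Td$ preserves the mate relations, and that the unit and counit of that equivalence are used consistently. Remark \ref{rmk_sign_classical} fixes the unit as $(-1)^d\eta_E^{-1}$ and the counit as $\eta_{E'}$, and its commuting triangles are exactly the triangle identities. Once this is in place, the signs $(-1)^d$ and $(-1)^{\td}$ cancel in each diagram and no sign survives. I would also check that the twists $[-1]^*$ commute with everything involved, which holds because $\alpha$ is linear.
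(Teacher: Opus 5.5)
Your argument for the first diagram is the paper's. There, (2$'$) is the left Beck--Chevalley transformation of (1) with respect to the left adjoints $[-1]^*\Tdt\tF'$ of $\tF$ and $[-1]^*\Td F'$ of $F$. Then (4) is its right adjoint, so up to inversion (4) is the mate of (1) under $\alpha_!\dashv\alpha^!$ and $\alpha'^*\dashv\alpha'_*$, and the unit triangle is the mate identity.

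The gap is in how you assign the other three diagrams, and as written that part of the plan would fail.
\begin{itemize}
\item Isomorphism (3), $F\alpha_*\Td\simeq\alpha'^!\tF\Tdt$, is built from $\alpha_*$ and $\alpha'^!$. So it cannot be a mate of (1) under $\alpha_!\dashv\alpha^!$ and $\alpha'^*\dashv\alpha'_*$; that role is played by (4).
\item Reading off the vertical maps, the second diagram is built from (1) and (4). It is the counit half of the same mate relation as the first.
\item The third and fourth diagrams are built from (2) and (3). They involve the units and counits of the other pair of adjunctions, $\alpha^*\dashv\alpha_*$ and $\alpha'_!\dashv\alpha'^!$, and (4) does not occur in the fourth diagram at all.
\end{itemize}
So you need a second identification, which your plan never mentions: (3) is the mate of (2) under $\alpha^*\dashv\alpha_*$ and $\alpha'_!\dashv\alpha'^!$. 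This does not follow from the (1)/(4) relation and needs its own check. The reason is that (2) is (2$'$) for the dual map $\alpha'$, i.e.\ the Beck--Chevalley transform of $\tF'\alpha'_!\simeq\alpha^*F'$ (which is (1) for $\alpha'$), while (3) is the right adjoint of that same isomorphism. The check is the same triangle-identity computation as for the first diagram, run for $\alpha'$; this is what the paper's ``the others are similar'' covers.

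Your sign bookkeeping is also off: nothing cancels between $(-1)^d$ and $(-1)^{\td}$. Suppose (2$'$) is taken to be the bare composite of (1) with $\eta_E$ and $\eta_{\tE'}$. Then it differs by $(-1)^d$ from the Beck--Chevalley transformation for the adjoint equivalence fixed after Remark \ref{rmk_sign_classical}. In that adjoint equivalence $\eta_E^{\pm1}$ carries the sign $(-1)^d$, while $\eta_{\tE'}^{\pm1}$ carries none. Already for $\alpha=\id$, the sign condition makes the bare composite equal to $(-1)^d$, so the first diagram would commute only up to $(-1)^d$. The argument works once (2$'$), and hence (2) and (4), are defined as the Beck--Chevalley transformation for that adjoint equivalence, which is how the paper's proof reads it. With this normalization the triangle identities absorb the sign condition, and no sign appears.
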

\begin{proof}
    We sketch the proof of the leftmost one, the others are similar. In Lemma \ref{lem_F_functoriality}, (2$'$) is the left Beck–Chevalley transformation of (1) with respect to the following diagram (a reference for this notion is \cite[\nopp §2.4.1]{GL}), viewing $[-1]^*\Tdt\tF'$ (resp. $[-1]^*\Td F'$) as the left adjoint of $\tF$ (resp. $F$):
\[\begin{tikzcd}
	{D(\tE)} & {D(\tE')} \\
	{D(E)} & {D(E')}
	\arrow["\tF", from=1-1, to=1-2]
	\arrow["{\alpha_!}"', from=1-1, to=2-1]
	\arrow["{\alpha'^*}", from=1-2, to=2-2]
	\arrow["F"', from=2-1, to=2-2]
\end{tikzcd}\]
One can then check that (4) coincides with the right Beck–Chevalley transformation of (1) with respect to the transpose of the above diagram (\textit{cf.} \cite[Remark 2.4.1.3]{GL}). The conclusion then follows from the fact that the Beck–Chevalley transformations are compatible with units and counits.
\end{proof}

\begin{lemma}[Convolution]
    Let $(-)\ast(-): D(E)\X D(E)\rightarrow D(E)$ be the convolution with respect to the additive structure of $E$, similarly for $E'$. For $A,B\in D(E)$, we have natural isomorphisms:
    $$F(A\ast B)\simeq FA\OX FB\text{ and }F(A\OX B)\simeq (FA\ast FB)\Td.$$
\end{lemma}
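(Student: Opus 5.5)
Define the convolution as $A\ast B:=\mathrm{add}_!(A\boxtimes_X B)$, where $\mathrm{add}: E\X_XE\rightarrow E$ is the addition and $\boxtimes_X$ is the exterior product over $X$. The first isomorphism will be proved by a direct computation with proper base change, the projection formula and additivity of $\CL$. The second will then be deduced from the first using Involutivity.

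For $F(A\ast B)\simeq FA\OX FB$, we have
$$F(A\ast B)=p'_!(p^*\mathrm{add}_!(A\boxtimes B)\OX m^*\CL).$$
Form the Cartesian square with top row $\mathrm{add}\X\id: E\X_XE\X_XE'\rightarrow E\X_XE'$. By proper base change and the projection formula, this becomes $q_!(\pr_{12}^*(A\boxtimes B)\OX (\mathrm{add}\X\id)^*m^*\CL)$, where $q:E\X_XE\X_XE'\rightarrow E'$ is the projection. Bilinearity of the pairing gives $m\circ(\mathrm{add}\X\id)=\mathrm{add}_{\AAA^1}\circ(m_{13},m_{23})$. Additivity (Proposition \ref{prop_L_additive}) then rewrites the kernel as $m_{13}^*\CL\OX m_{23}^*\CL$.

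Now identify $E\X_XE\X_XE'$ with $(E\X_XE')\X_{E'}(E\X_XE')$. The integrand becomes the exterior product over $E'$ of $p^*A\OX m^*\CL$ and $p^*B\OX m^*\CL$. The Künneth formula (Lemma \ref{lem_kunneth}) over the base $E'$ gives $p'_!(\cdots)\OX p'_!(\cdots)=FA\OX FB$, since an exterior product over $E'$ with diagonal base is just $\OX$. Each step is a canonical natural isomorphism, so naturality is automatic.

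For the second isomorphism, apply the first on $E'$ to $FA,FB$. Involutivity (Proposition \ref{prop_involutivity}) identifies $F'(FA\ast FB)$ with
$$F'FA\OX F'FB\simeq [-1]^*A\OX[-1]^*B\langle -2d\rangle=[-1]^*(A\OX B)\langle -2d\rangle.$$
On the other hand, $F'F(A\OX B)\simeq [-1]^*(A\OX B)\Tmd$. Hence
$$F'(FA\ast FB)\simeq F'F(A\OX B)\Tmd.$$
Since $F'$ is an equivalence, with inverse $F[-1]^*\Td$ by Remark \ref{rmk_sign_classical}, we get $FA\ast FB\simeq F(A\OX B)\Tmd$, i.e.\ $F(A\OX B)\simeq (FA\ast FB)\Td$. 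Here we use that $[-1]^*$ commutes with $F$ and with the Tate twist.

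The main point of care is the identification of the triple fibre product with the fibre product over $E'$, and checking that $\boxtimes_{E'}$ of the two pulled-back integrands matches $\pr_{12}^*(A\boxtimes B)\OX m_{13}^*\CL\OX m_{23}^*\CL$. This is a routine bookkeeping of pullbacks. The rest is formal.
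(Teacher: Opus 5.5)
Your proposal is correct and follows the paper's route. The paper gets the first isomorphism from Functoriality (1) for $\mathrm{add}\colon E\X_XE\rightarrow E$ (whose dual is the diagonal of $E'$), additivity of $\CL$ and the Künneth formula; this is exactly what your base-change/projection-formula computation followed by Künneth over $E'$ unwinds, and the second isomorphism is deduced from the first via Involutivity as you do (the commutation of $[-1]^*$ with $F$ that you mention is not actually needed, only monoidality of $[-1]^*$ and compatibility of $F'$ with twists).
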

\begin{proof}
    The first formula is a formal consequence of Functoriality (1), the additivity of the Artin–Schreier motive (Proposition \ref{prop_L_additive}) and the Künneth formula (Lemma \ref{lem_kunneth}). The second follows from the first, using Involutivity.
\end{proof}

\begin{lemma}[Plancherel]
    For $A,B\in D(E)$, we have a natural isomorphism:
    $$\pi_{E'!}(FA\OX FB)\simeq\pi_{E!}(A\OX[-1]^*B)\Tmd.$$
\end{lemma}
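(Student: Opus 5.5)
Plan: reduce Plancherel to the convolution formula and to base change along the zero-section, then evaluate the Fourier transform at the origin. Write $\iota_{E'}: X\hookrightarrow E'$ for the zero-section of $E'$ and $\Delta: E'\to E'\X_X E'$ for the diagonal.

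First, $\pi_{E'!}(FA\OX FB)$ is the pushforward along $\pi_{E'}$ of $\Delta^*(FA\boxtimes FB)$. I would convert $\pi_{E'!}\Delta^*$ into a fibre of a convolution by applying proper base change to the Cartesian square with corners $E'\X_X E'$, $E'$ (via addition), $E'$ (via $(u,w)\mapsto u$) and $X$. The fibre of addition over $0$ is the antidiagonal $\{(u,-u)\}$, so this square gives
$$\iota_{E'}^*(C\ast[-1]^*D)\simeq\pi_{E'!}(C\OX D).$$
Taking $C=FA$ and $D=FB$, this yields $\pi_{E'!}(FA\OX FB)\simeq\iota_{E'}^*(FA\ast[-1]^*FB)$.

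Second, I would move $[-1]^*$ across $F$. Since $[-1]$ is an automorphism of $E$ whose dual is $[-1]$ on $E'$, Functoriality (1) (or directly the symmetry $m\circ([-1]\X\id)=m\circ(\id\X[-1])$) gives $[-1]^*F\simeq F[-1]^*$. Then the second Convolution formula gives
$$FA\ast F([-1]^*B)\simeq F(A\OX[-1]^*B)\Tmd.$$

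Third, I would identify $\iota_{E'}^*F$ with $\pi_{E!}$. Restricting the kernel along $\iota_{E'}$, base change gives $\iota_{E'}^*F(M)\simeq\pi_{E!}(M\OX m^*\CL|_{E\X_X 0})$. The pairing $m$ vanishes on $E\X_X 0_{E'}$, so $m^*\CL$ restricts to the pullback of $i_0^*\CL\simeq\1$ by Remark \ref{rmk_stalk_1}. Hence $\iota_{E'}^*F\simeq\pi_{E!}$; this is equivalently Functoriality (1) applied to $\alpha: E\to0$, whose dual map is $\iota_{E'}$. Combining the three steps gives
$$\pi_{E'!}(FA\OX FB)\simeq\pi_{E!}(A\OX[-1]^*B)\Tmd.$$

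The main obstacle is bookkeeping rather than substance. I would need to check that the convolution in the Convolution lemma is defined with $!$-pushforward along addition, so that the proper base change in the first step applies. I would also need to check that the sign conventions for the antidiagonal match, so that the factor $[-1]^*$ lands on $B$ and not on $A$.
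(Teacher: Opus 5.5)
Your argument is correct, but it runs the paper's proof in mirror image.

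\textbf{The paper's route.} It starts from the first Convolution formula, $FA\OX FB\simeq F(A\ast B)$. It then applies Functoriality (2) to the zero section $\iota_E$, whose dual is $\pi_{E'}$, to get $\pi_{E'!}F\simeq\iota_E^*\Tmd$. It finishes with proper base change along addition on $E$: $\iota_E^*(A\ast B)\simeq\pi_{E!}(A\OX[-1]^*B)$.

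\textbf{Your route.} You do the proper base change along addition on $E'$ instead. You then use the second Convolution formula together with $[-1]^*F\simeq F[-1]^*$. You finish with Functoriality (1) for $\pi_E$, whose dual is $\iota_{E'}$; this is the formal identity $\iota_{E'}^*F\simeq\pi_{E!}$.

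\textbf{Comparison.} Each route uses Involutivity once. The paper uses it through Functoriality (2). You use it through the second Convolution formula, which the paper derives from the first via Involutivity. So neither route is more elementary. The paper's is a step shorter, since it never needs to commute $[-1]^*$ past $F$. Yours reads Plancherel directly as ``the value of $F(A\OX[-1]^*B)$ at the origin of $E'$ is its integral over $E$''.

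\textbf{Your bookkeeping checks.} Both come out as you expect. The convolution is $\mathrm{add}_!$; this is what makes the first Convolution formula an instance of Functoriality (1) for $\mathrm{add}$. The antidiagonal identification of the fibre of addition over the zero section places $[-1]^*$ on the second factor, as you need.
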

\begin{proof}
    Using Convolution, we compute: $\pi_{E'!}(FA\OX FB)\simeq\pi_{E'!}F(A\ast B)\simeq \iota_{E}^* (A\ast B)\Tmd\simeq \pi_{E!}(A\OX[-1]^*B)\Tmd$, where in the second step we used Functoriality (2).
\end{proof}

For a vector bundle $E$ over a scheme $X$, for every $n\in \ZZ$, consider the weight-$n$ homothety action $\Gm\X E\rightarrow E, (\lambda,v)\mapsto \lambda^n v$. We denote $\Gm$ by $\Gm(n)$ when considering the $n$-th homothety action for $n\ne 1$. Denote by $\rho_n: E\rightarrow E/\Gm(n)$ the projection to the quotient stack.

\begin{definition}\label{def_mon_obj}
    The category $D_{\mathrm{mon}}(E)$ of monodromic motives on $E$ is the thick subcategory of $D(E)$ generated by $\rho^*_n(D(E/\Gm(n)))$, as $n$ ranges over all non-zero integers.
\end{definition}

In other words, every monodromic motive on $E$ can be obtained from objects in $\rho^*_n(D(E/\Gm(n)))$ for various $n$ by finitely many operations of shifts, taking cofibres and direct summands. Note that we have a canonical identification $E/\Gm(n)\simeq E/\Gm(-n)$ induced by the inversion automorphism of $\Gm$. Consequently, in the above definition, we can equivalently let $n$ range over all positive integers. The following diagram will be used repeatedly below:
\[\begin{tikzcd}
	& {E\X_XE'} && {\AAA^1} \\
	E & {\frac{E\X_XE'}{\Gm(n)}} & {E'} & {\AAA^1} \\
	{E/\Gm(n)} & {E/\Gm(n)\X_XE'/\Gm(n)} & {E'/\Gm(n)} & {\AAA^1/\Gm(n)}
	\arrow["m"{description}, curve={height=-18pt}, from=1-2, to=1-4]
	\arrow["p"', from=1-2, to=2-1]
	\arrow[from=1-2, to=2-2]
	\arrow["{p'}", from=1-2, to=2-3]
	\arrow[equal, from=1-4, to=2-4]
	\arrow["\square"{description, pos=0.55}, draw=none, from=2-1, to=2-2]
	\arrow["{\rho_n}"', from=2-1, to=3-1]
	\arrow["{\bar{m}}"{description}, curve={height=-18pt}, from=2-2, to=2-4]
	\arrow["{\bar{p}}"', from=2-2, to=3-1]
	\arrow[from=2-2, to=3-2]
	\arrow["{\bar{p}'}", from=2-2, to=3-3]
	\arrow["\square"{description}, draw=none, from=2-3, to=2-2]
	\arrow["{\rho'_n}", from=2-3, to=3-3]
	\arrow["{q_n}", from=2-4, to=3-4]
	\arrow["\pi", from=3-2, to=3-1]
	\arrow["{\pi'}"', from=3-2, to=3-3]
	\arrow["\mu"{description}, curve={height=-18pt}, from=3-2, to=3-4]
\end{tikzcd}\]
Here the lower curved square is also Cartesian, $\Gm(n)$ acts on $E\X_XE'$ by weight $n$ (resp. $-n$) on $E$ (resp. $E'$), $E\X_XE'\rightarrow\frac{E\X_XE'}{\Gm(n)}$ is the quotient map, and $\frac{E\X_XE'}{\Gm(n)}\rightarrow E/\Gm(n)\X_XE'/\Gm(n)$ is the quotient map of the action induced by the weight $n$ action on $E$.

\begin{lemma}[Monodromicity]\label{lem_monodromicity}
    The Fourier transform restricts to an equivalence $D_{\mathrm{mon}}(E)\isoto D_{\mathrm{mon}}(E')$.
\end{lemma}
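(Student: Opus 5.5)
Since $F$ is already an equivalence $D(E)\isoto D(E')$ by Involutivity (Proposition \ref{prop_involutivity} and its counterpart for $E'$), with quasi-inverse $F'[-1]^*\Td$, and since $F$ and $F'$ are exact and so commute with shifts, cofibres and direct summands, the statement reduces to two inclusions: $F(D_{\mathrm{mon}}(E))\subseteq D_{\mathrm{mon}}(E')$ and $F'(D_{\mathrm{mon}}(E'))\subseteq D_{\mathrm{mon}}(E)$. The second follows from the first by symmetry. We also need $[-1]^*$ and $\Td$ to preserve $D_{\mathrm{mon}}(E)$. For $\Td$ this is clear. For $[-1]^*$, note that $[-1]$ commutes with every $\Gm(n)$-action, hence descends to an automorphism $\overline{[-1]}$ of $E/\Gm(n)$ with $[-1]^*\rho_n^*\simeq\rho_n^*\overline{[-1]}^*$. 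Finally, the monodromic subcategory is thick and generated by the $\rho_n^*(D(E/\Gm(n)))$, and $F$ is exact, so it suffices to show $F\rho_n^*B\in\rho_n'^*(D(E'/\Gm(n)))$ for each $n\neq0$ and each $B\in D(E/\Gm(n))$. Here $\Gm(n)$ acts on $E'$ with weight $-n$, and $E'/\Gm(-n)\simeq E'/\Gm(n)$, so this target is monodromic.

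To prove this, use the big diagram preceding the lemma. The pairing $m$ is $\Gm(n)$-invariant for the action of weight $n$ on $E$ and $-n$ on $E'$, so it factors through $\bar m:\frac{E\X_XE'}{\Gm(n)}\to\AAA^1$. Write $\theta: E\X_XE'\to\frac{E\X_XE'}{\Gm(n)}$ for the quotient map. Then $p^*\rho_n^*B\OX m^*\CL\simeq\theta^*(\bar p^*B\OX\bar m^*\CL)$. The right-hand square $E\X_XE'\to E'$ over $\frac{E\X_XE'}{\Gm(n)}\to E'/\Gm(n)$ is Cartesian; this holds because both vertical maps are $\Gm(n)$-torsors and the square is equivariant. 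Proper base change then gives
\[
F\rho_n^*B\simeq p'_!\theta^*(\bar p^*B\OX\bar m^*\CL)\simeq\rho_n'^*\,\bar p'_!(\bar p^*B\OX\bar m^*\CL),
\]
which lies in $\rho_n'^*(D(E'/\Gm(n)))$. Proper base change for $!$-pushforward along the representable map $\bar p'$ of Artin stacks is available in the six-functor formalism on derived Artin stacks (\cite{khan_virtual}). Note that we never need $\CL$ to descend to $\AAA^1/\Gm(n)$: we only use $\bar m$, not $\mu$.

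The main step to check carefully is the stack-theoretic one. First, $\frac{E\X_XE'}{\Gm(n)}$ is well defined and the two squares involving $\rho_n$ and $\rho'_n$ are genuinely Cartesian. Second, the identification $\theta^*\bar p^*\simeq p^*\rho_n^*$ is compatible with the isomorphism $m^*\CL\simeq\theta^*\bar m^*\CL$. Both reduce to the observation that $\theta$, $\rho_n$ and $\rho'_n$ are $\Gm$-torsors, together with equivariance of $p$, $p'$ and $m$. If one prefers to avoid $!$-functors on stacks, an alternative is the formula $F\simeq p'_*(p^*(-)\OX m^*\CL)$ from Lemma \ref{lem_miracle}. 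One then uses smooth base change for $*$-pushforward along the smooth cover $\rho'_n$, which holds in the smooth-descent description of $D$ on stacks.
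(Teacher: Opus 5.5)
Your proposal is correct and follows the paper's proof. Both reduce via Involutivity to showing that $F$ preserves monodromic objects, and both identify $F\rho_n^*B\simeq\rho_n'^*\bar p'_!(\bar p^*B\OX\bar m^*\CL)$ using base change along the Cartesian $\Gm(n)$-torsor squares. Your additional check that $[-1]^*$ and $\Td$ preserve $D_{\mathrm{mon}}$ fills in a detail the paper leaves implicit.
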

\begin{proof}
    The notations are as in the diagram above. By Involutivity, it suffices to show that monodromicity is preserved by $F$. For this, it suffices to write $p'_!(p^*\rho_n^*A\otimes m^*\CL)$ as a $\rho'^*_n$ pullback of an object on $E'/\Gm(n)$, for every $A\in D(E/\Gm(n))$. But $p'_!(p^*\rho_n^*A\otimes m^*\CL)\simeq \rho'^*_n\bar{p}'_!(\bar{p}^*A\otimes \bar{m}^*\CL)$ by standard diagram chase, using proper base change and the projection formula.
\end{proof}

\begin{definition}[{\cite{laumon_homogene}}]\label{def_F_1}
    The homogeneous Fourier transform is the integral transform $F_1: D(E/\Gm)\rightarrow D(E'/\Gm)$ with kernel $\beta_*\1[-1]\in D(\AAA^1/\Gm)$, where $\beta$ is the open immersion from $\Gm/\Gm(\simeq \Spec(k))$ to $\AAA^1/\Gm$. In other words, $F_1A:= \pi'_!(\pi^*A\otimes \mu^*(\beta_*\1[-1]))$.
\end{definition}

\begin{lemma}[Compatibility with the homogeneous Fourier transform]
    We have a commutative diagram:
\[\begin{tikzcd}
	{D(E)} & {D(E')} \\
	{D(E/\Gm)} & {D(E'/\Gm)}
	\arrow["F", from=1-1, to=1-2]
	\arrow["{\rho_1^*}", from=2-1, to=1-1]
	\arrow["{F_1}"', from=2-1, to=2-2]
	\arrow["{\rho'^*_1}"', from=2-2, to=1-2]
\end{tikzcd}\]
\end{lemma}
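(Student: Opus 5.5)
We want $\rho_1'^*F_1\simeq F\rho_1^*$. As in the proof of Lemma \ref{lem_monodromicity} (with $n=1$), proper base change and the projection formula give
\[ F\rho_1^*A\simeq\rho_1'^*\bar p'_!(\bar p^*A\otimes\bar m^*\CL). \]
Write $s:\frac{E\X_XE'}{\Gm}\to E/\Gm\X_XE'/\Gm$ for the quotient map in the displayed diagram. This map is a $\Gm$-torsor. It sits in a Cartesian square over $q_1:\AAA^1\to\AAA^1/\Gm$, via $\bar m$ and $\mu$. Since $\bar p=\pi s$ and $\bar p'=\pi' s$, the projection formula and base change give
\[ \bar p'_!(\bar p^*A\otimes\bar m^*\CL)\simeq\pi'_!\bigl(\pi^*A\otimes s_!\bar m^*\CL\bigr)\simeq\pi'_!\bigl(\pi^*A\otimes\mu^*q_{1!}\CL\bigr). \]
So the whole statement reduces to a single kernel computation:
\[ q_{1!}\CL\simeq\beta_*\1[-1]\quad\text{in } D(\AAA^1/\Gm), \]
where $q_1$ is the quotient map for the weight-one action. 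Compatibility with the weight convention (weight $-1$ on $E'$) is the usual identification of the pairing as $\Gm$-invariant. I would check it once in the diagram and not dwell on it.

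\textbf{Constructing the map.} I would build a map $q_{1!}\CL\to\beta_*\1[-1]$ and then test it pointwise. Apply $\beta^*$ and base change along $\beta$. The pullback of $q_1$ over $\Gm/\Gm=\Spec k$ is $\Gm\to\Spec k$. So we need $\Gamma_c(\Gm,\CL|_{\Gm})\simeq\1[-1]$. This follows from the triangle $j_!j^*\CL\to\CL\to i_{0*}i_0^*\CL$ on $\AAA^1$, using $p_!\CL=0$ (Lemma \ref{lem_coho_L_zero}) and $i_0^*\CL\simeq\1$ (Remark \ref{rmk_stalk_1}). Adjunction $\beta^*\dashv\beta_*$ then produces the map $q_{1!}\CL\to\beta_*\beta^*q_{1!}\CL\simeq\beta_*\1[-1]$.

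\textbf{Checking it is an isomorphism.} By Lemma \ref{lem_point_conservative} it suffices to check after pulling back to the two strata, the open point $\Gm/\Gm$ and the closed point $0/\Gm=B\Gm$. On the open stratum the map is an isomorphism by construction. On $B\Gm$ the source becomes, by base change along $B\Gm\to\AAA^1/\Gm$ (the pullback of $q_1$ is $0\to B\Gm$), the pushforward $e_!\1$ of $\CL|_0\simeq\1$ along $e:\Spec k\to B\Gm$. The target is $i^*\beta_*\1[-1]$. I would compute this by pulling back the localisation triangle along the smooth cover $\AAA^1\to\AAA^1/\Gm$, equivariantly. On $\AAA^1$ one has $i_0^*j_*\1\simeq\1\oplus\1\langle-1\rangle[1]$, and the equivariant version should identify $i^*\beta_*\1$ with $e_!\1[1]$.

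\textbf{Main obstacle.} The hard step is the identification of the stalk at $B\Gm$, and showing that our specific map, not just some abstract isomorphism, realises it. For this I would try the following. Pull back further along $e$, which is a smooth cover, so the check becomes a conservative statement on $\Spec k$. There, everything reduces to the explicit computation over the point $0\in\AAA^1$ of the triangle above with $\Gm$-torsors. Alternatively, one can use the triangle $\CL\to\CL|_0$ and compare the boundary maps.
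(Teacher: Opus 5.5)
Your reduction to the kernel identity $q_{1!}\CL\simeq\beta_*\1[-1]$ matches the paper, as do the choice of map $q_{1!}\CL\to\beta_*\beta^*q_{1!}\CL\simeq\beta_*\1[-1]$ and the open-stratum computation. The gap is the closed-stratum step, and all the content of the lemma sits there.

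Let $\alpha\colon B\Gm\hookrightarrow\AAA^1/\Gm$ be the closed immersion. By the localisation triangle, $\alpha^*$ of your map is an isomorphism if and only if $\alpha^!q_{1!}\CL=0$. Your plan is to pull back along $e$ and compute ``over the point $0\in\AAA^1$ with $\Gm$-torsors''. That cannot decide the question, because data on the two strata do not determine whether the map is an isomorphism. For instance, $M=\beta_!\1[-1]\oplus\alpha_*e_!\1$ has $\beta^*M\simeq\1[-1]$ and $\alpha^*M\simeq e_!\1$, exactly like $q_{1!}\CL$. Yet $\alpha^*(M\to\beta_*\beta^*M)$ is zero.

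What singles out $q_{1!}\CL$ is how $\CL$ glues across $0$, and the input controlling this is the global vanishing $\Gamma_c(\AAA^1,\CL)=0$. Concretely, after pulling back along $q_1$ you must show $i_0^!K=0$ for $K=q_1^*q_{1!}\CL$. Near $0$, $K$ involves $\CL$ on all of $\AAA^1$, not just $\CL|_0\simeq\1$.

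Your ``compare the boundary maps'' variant runs into the same problem. Applying $q_{1!}$ to $j_!j^*\CL\to\CL\to i_{0*}\1$ gives a boundary map $\delta\colon\alpha_*e_!\1\to\beta_!\1$. The claim is that its adjoint $e_!\1\to\alpha^!\beta_!\1$ is an isomorphism. But $\Gamma_c(\AAA^1,\CL)=0$ only tells you that $r_!\delta$ is an isomorphism, where $r\colon\AAA^1/\Gm\to B\Gm$ is the projection.

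The missing idea is to move the global vanishing onto the closed stratum using $\Gm$-equivariance; the paper does this following Laumon.
\begin{itemize}
\item Since $r\alpha=\id$, there is a natural map $\alpha^!\to\alpha^!r^!r_!\simeq r_!$.
\item On $q_{1!}\CL$ this map is an isomorphism. After the smooth cover $q_1$ and base change, it becomes $i_0^!(q_1^!q_{1!}\CL)\to r_!(q_1^!q_{1!}\CL)$ for the $\Gm$-equivariant object $q_1^!q_{1!}\CL$, and this is the Contraction Principle.
\item Hence $\alpha^!q_{1!}\CL\simeq r_!q_{1!}\CL\simeq e_!\,\Gamma_c(\AAA^1,\CL)=0$.
\end{itemize}

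If you prefer to keep your pointwise check, an elementary substitute also works. The reparametrisation $(\lambda,x)\mapsto(\lambda^{-1},\lambda x)$ of $\Gm\X\AAA^1$, together with the kernel computation $\pr_!m^*\CL\simeq\iota_{0*}\1\langle-1\rangle$ from the proof of Proposition \ref{prop_involutivity}, gives $K\simeq\fib(\iota_{0*}\1\langle-1\rangle\xrightarrow{\phi}\1_{\AAA^1})$. Since $\Gamma_c(\AAA^1,K)\simeq\Gamma_c(\Gm,\1)\OX\Gamma_c(\AAA^1,\CL)=0$, the scalar $\phi$ is non-zero, so $i_0^!K=0$.
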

\begin{proof}
    Let $A\in D(E/\Gm)$. As in Lemma \ref{lem_monodromicity}, standard diagram chase gives $p'_!(p^*\rho_1^*A\otimes m^*\CL)\simeq \rho'^*_1\bar{p}'_!(\bar{p}^*A\otimes \bar{m}^*\CL)\simeq \rho'^*_1\pi'_!(\pi^*A\otimes\mu^* q_{1!}\CL)$. So it suffices to provide an isomorphism $q_{1!}\CL\simeq \beta_*\1[-1]$. For the rest of this proof, we abbreviate $q_1$ by $q$. Consider the following diagram:
\[\begin{tikzcd}
	0 & {\AAA^1} & \Gm \\
	{B\Gm} & {\AAA^1/\Gm} & {\Spec(k)}
	\arrow["i", hook, from=1-1, to=1-2]
	\arrow["q"', from=1-1, to=2-1]
	\arrow["\square"{description, pos=0.6}, draw=none, from=1-1, to=2-2]
	\arrow["q", from=1-2, to=2-2]
	\arrow["\square"{description, pos=0.6}, draw=none, from=1-2, to=2-3]
	\arrow["j"', from=1-3, to=1-2]
	\arrow["q", from=1-3, to=2-3]
	\arrow["\alpha"', hook, from=2-1, to=2-2]
	\arrow["\beta", from=2-3, to=2-2]
\end{tikzcd}\]
We have an exact triangle $\alpha_*\alpha^!q_!\CL\rightarrow q_!\CL\rightarrow\beta_*\beta^*q_!\CL\rightarrow$. By proper base change, Lemma \ref{lem_coho_L_zero} and using the exact triangle $j_!j^*\CL\rightarrow \CL\rightarrow i_*i^*\CL\rightarrow$, we get $\beta_*\beta^*q_!\CL\simeq \beta_*\1[-1]$. It remains to show $\alpha^!q_!\CL=0$. We follow the method in \cite[Remarque 2.3.1]{laumon_homogene}. We have the following diagram
\[\begin{tikzcd}
	0 & {\AAA^1} \\
	{B\Gm} & {\AAA^1/\Gm}
	\arrow["i"', curve={height=12pt}, hook, from=1-1, to=1-2]
	\arrow["q"', from=1-1, to=2-1]
	\arrow["r"', from=1-2, to=1-1]
	\arrow["q", from=1-2, to=2-2]
	\arrow["\alpha"', curve={height=12pt}, hook, from=2-1, to=2-2]
	\arrow["r"', from=2-2, to=2-1]
\end{tikzcd}\]
where the horizontal arrows $r$ are projections, note both squares are Cartesian. We have a canonical map $\alpha^!\rightarrow r_!$ defined by $\alpha^!\xrightarrow{\mathrm{unit}}\alpha^! r^!r_!\simeq r_!$. We claim that $\alpha^!q_!\CL\rightarrow r_!q_!\CL$ is an isomorphism. This will complete the proof, as $r_!q_!\CL\simeq q_!r_!\CL=0$, by Lemma \ref{lem_coho_L_zero}. For the claim, as $q$ is a smooth surjection, it suffices to show $q^!\alpha^!q_!\CL\isoto q^!r_!q_!\CL$ (using $q^!\simeq q^*\langle 1\rangle$), which by proper base change is equivalent to $i^!q^!q_!\CL\isoto r_!q^!q_!\CL$. As $q^!q_!\CL$ is $\Gm$-equivariant, this follows from the Contraction Principle (\cite[Appendix B]{FYZ}, \cite[\nopp §8.3]{FK})\footnote{One might be able to show directly that $q_!\CL$ satisfies the Contraction Principle, thus avoiding the $q^!$ step.}.
\end{proof}

\begin{lemma}[Constructibility]
    The Fourier transform restricts to an equivalence between constructible objects: $D_{\mathrm{c}}(E)\isoto D_{\mathrm{c}}(E')$. 
\end{lemma}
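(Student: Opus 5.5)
By Involutivity (Proposition \ref{prop_involutivity}) we have $F'F\simeq[-1]^*\Tmd$ and $FF'\simeq[-1]^*\Tmd$. The functors $[-1]^*$ and $\Tmd$ are autoequivalences preserving $D_c$. So it suffices to show that $F$ sends $D_c(E)$ into $D_c(E')$, and symmetrically that $F'$ sends $D_c(E')$ into $D_c(E)$. The restricted functors are then mutually quasi-inverse up to these autoequivalences, and hence equivalences. Only the preservation statement needs proof, and by symmetry only for $F$.

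For preservation, write $FA=p'_!(p^*A\OX m^*\CL)$ and use the closure properties recorded in Conventions (3).
\begin{itemize}
\item $p^*$ preserves $D_c$, since $p$ is of finite type.
\item $m^*\CL$ is constructible: $\CL$ is a direct summand of $f_*f^*\1$, which is constructible because $f$ is finite étale, and $D_c$ is thick and stable under pullback.
\item $\OX$ preserves $D_c$.
\item $p'_!$ preserves $D_c$ because $p'$ is a representable map of finite type (a vector bundle projection).
\end{itemize}
Hence $FA\in D_c(E')$.

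The one point needing care is that $X$ is only assumed finite dimensional, not quasi-compact, so $E$ and $E'$ need not be of finite type over $X$ globally. I would handle this by noting that constructibility is local on the base: test on a Zariski cover of $X$ by quasi-compact opens. The Fourier transform commutes with restriction to opens by Base change (Lemma \ref{lem_F_base_change}(2)), which reduces everything to the quasi-compact case. In that case all maps involved are of finite type, so the closure properties above apply directly. I do not expect any further obstacle; the argument is formal once these closure properties and Involutivity are available.
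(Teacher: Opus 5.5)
Your proposal is correct and follows the paper's proof: Involutivity reduces the claim to showing that $F$ preserves $D_c$. That holds because $p^*$, $\OX$ and $p'_!$ preserve constructibility and $m^*\CL$ is constructible. Your quasi-compactness detour is harmless but not needed, since $\pi_E\colon E\rightarrow X$ (and hence $p'$) is affine and locally of finite presentation, and therefore of finite type over any base $X$.
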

\begin{proof}
    By Involutivity, it suffices to show that constructibility is preserved by $F$. This is immediate, as all functors involved preserve constructibility.
\end{proof}

\section{The motivic Fourier transform on derived vector bundles}\label{sec_F_dVect}
We now construct the theory of Fourier transform for \'etale $L$-motives on derived vector bundles over derived Artin stacks. Such a theory has been constructed for $\ell$-adic sheaves in \cite{FYZ}, and for $\Gm$-equivariant \'etale motives in \cite{khan_homogeneous} (\textit{cf.} \cite[\nopp §8]{FK}). We follow the method of \cite[Appendix A]{FYZ}. See Conventions for the notations.

\subsection{Basic facts on derived vector bundles}\label{subsec_dVect}
A derived vector bundle $E$ on a derived Artin stack $X$ is the total space associated to a perfect complex $\CE\in\mathrm{Perf}(X)$. The total space functor $\mathrm{Tot}: \mathrm{Perf}(X)\rightarrow \mathrm{dArtSt}_k$ preserves Cartesian squares.  There are canonical maps $\iota_{E}: X\rightarrow E$ (zero section) and $\pi_E: E\rightarrow X$ (projection). If $\CE$ has tor-amplitude in $[0,+\infty)$ (resp. $[1,+\infty)$), then $\iota_E$ is a closed immersion and $\pi_E$ is representable by derived affine schemes (resp. a closed immersion); if $\CE$ has tor-amplitude in $(-\infty,0]$ (resp. $(-\infty,-1]$), then $\iota_E$ is quasi-smooth (resp. smooth) and $\pi_E$ is smooth (\cite[Lemma 6.1.5]{FYZ}).\\

By a \underline{global presentation} (or simply \underline{presentation}) of $\CE$ (or $E$) we mean an isomorphism $\CE\cong( \cdots\rightarrow\CE^{-1}\rightarrow\CE^0\rightarrow\CE^1\rightarrow\cdots)$, where the right hand side is (the object in $D(\CO_X)$ associated to) a complex of locally free sheaves of finite rank with finitely many non-zero terms. By a \underline{(global) presentation} of a map $\alpha: \widetilde{\CE}\rightarrow\CE$ we mean an isomorphism of $\alpha$ with a map induced by a map of complexes of some presentations of $\widetilde{\CE}$ and $\CE$. According to \cite[\nopp §2.2]{FK}, presentations always exist smooth locally. Given a presentation, we denote by $\CE^{\leq0}$ the complex $(\cdots\rightarrow\CE^{-1}\rightarrow\CE^0)$, and by $\CE^{\geq0}$ the complex $(\CE^0\rightarrow\CE^1\rightarrow\cdots)$. We have a Cartesian square:
\[\begin{tikzcd}
	{E^{\geq0}} & {E^0} \\
	E & {E^{\leq0}}
	\arrow[from=1-1, to=1-2]
	\arrow[from=1-1, to=2-1]
	\arrow["\square"{description, pos=0.55}, draw=none, from=1-1, to=2-2]
	\arrow["p", from=1-2, to=2-2]
	\arrow["i"', from=2-1, to=2-2]
\end{tikzcd}\]
Here $i$ is a closed immersion and $p$ is smooth (\cite[Lemma 6.1.5]{FYZ}).

\begin{lemma}\label{lem_bootstrap}
    Let $p: \tE\rightarrow E$ be a smooth map of derived vector bundles over a derived Artin stack $X$. Then $p$ is surjective and the adjunction maps $\id\rightarrow p_*p^*$, $p_!p^!\rightarrow \id$ are isomorphisms.
\end{lemma}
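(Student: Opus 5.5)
Both assertions are smooth local on $X$, so I would first reduce to the case where $\tE$, $E$ and $p$ admit a global presentation. Surjectivity can be checked on points and pullback along smooth covers is conservative. For the adjunction maps, base change (both $p_*$ and $p_!$ commute with smooth pullback along $X'\to X$, using smooth base change for $p_*$) shows that $\id\to p_*p^*$ and $p_!p^!\to\id$ may be tested after pullback to a smooth atlas, using that $D(-)$ is a sheaf for the smooth topology.

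Since $p$ is smooth, $p^!\simeq p^*\langle\dim p\rangle$. Hence $p_!p^!\to\id$ is the counit $p_\sharp p^*\to\id$ up to twist, and it is the left-adjoint mate of $\id\to p_*p^*$. So it suffices to prove that $p^*$ is fully faithful, which gives both statements at once; equivalently, $\id\to p_*p^*$ is an isomorphism. By the projection formula for smooth $p$ ($p_*p^*A\simeq p_*\1\otimes A$ fails in general, so instead) I would argue via $p_\sharp p^*A\simeq p_\sharp\1\otimes A$, which holds for smooth $p$. It therefore suffices to show that $p_\sharp\1_{\tE}\to\1_E$ is an isomorphism, which can be checked pointwise by Lemma \ref{lem_point_conservative}.

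Pointwise, let $v\to E$ be a point. Since $p$ is a map of derived vector bundles, its fibre $\tE\times_E v$ is either empty or a torsor under the fibre $K=\tE\times_E 0$ over $v$, where $K$ is the derived vector bundle of $\mathrm{fib}(\widetilde{\CE}\to\CE)$. Smoothness of $p$ means $K$ has tor-amplitude in $(-\infty,0]$ (from the presentation, $p$ smooth corresponds to the cone having no positive-degree part after restricting, using \cite[Lemma 6.1.5]{FYZ}). Surjectivity then follows: the obstruction to lifting a point of $E$ lies in $H^1$ of the fibre complex, which vanishes in that amplitude. Indeed, over a field the map on $H^0$ of the dual complexes is surjective precisely by smoothness, and $\pi_0$ of total spaces surjects. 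The fibre is then a torsor under $K_v$. Over a field, $K_v\simeq V\times B W$-type stack, namely $\mathrm{Tot}$ of a complex $\cdots\to K^{-1}\to K^0$, which is the quotient of the vector space $K^0$ by the (derived) group $K^{-1}$, etc. Its motive satisfies $\pi_\sharp\1\simeq\1$ by $\AAA^1$-invariance for the vector space part and for classifying stacks of vector groups ($B\Ga^n$ is $\AAA^1$-contractible by descent along $\mathrm{pt}\to B\Ga^n$ with Čech nerve powers of $\AAA^n$).

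The main obstacle will be this last step: making rigorous that $p_\sharp\1\simeq\1$ for $\mathrm{Tot}$ of a connective-direction complex over a point, uniformly in the derived/stacky structure. I would handle it by induction on the length of the presentation, using the Cartesian square with $E^{\leq 0}$ above to peel off one term at a time. Each step is either an affine bundle (homotopy invariance) or a $B(\text{vector group})$-torsor, handled by smooth descent along the Čech nerve of a section, each term of which is an affine space.
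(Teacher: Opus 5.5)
Your proof is correct, but it takes a genuinely different route from the paper's.

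\textbf{Surjectivity.} The paper base-changes $p$ from $\iota_C\colon X\to C$, where $C$ is the total space of the cofibre of $\widetilde{\CE}\to\CE$ and has amplitude $(-\infty,-1]$. It then inducts on a presentation of $C$, viewed as iterated classifying stacks. You instead compute the lifting obstruction at a field point directly, as an element of $H^1$ of the restricted fibre, which vanishes. This is the same fact made explicit and needs no presentation. (The phrase about ``$H^0$ of the dual complexes'' is a slip: you mean $H^0(\widetilde{\CE}_x)\to H^0(\CE_x)$.)

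\textbf{Full faithfulness, the paper's route.} The paper takes the Čech nerve of $p$ and observes that $\tE\X_E\tE$ is a derived vector bundle over $\tE$ of amplitude $(-\infty,0]$. It then applies the relative homotopy invariance of Lemma \ref{lem_blob}.1, over the general base $\tE$, termwise. This gives $A\simeq\lim_\Delta p_*p^*A\simeq p_*p^*A$.

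\textbf{Full faithfulness, your route.} You pass to the counit $p_\sharp p^*\to\id$ and use the smooth projection formula to reduce to $p_\sharp\1\to\1$. Point-conservativity (Lemma \ref{lem_point_conservative}) and base change then reduce you to the fibre over a field point. That fibre is a trivial torsor under $K_v$, and there the complex splits, so you only need homotopy invariance for a product of $\AAA^{a}$ and $B^j\Ga^{b}$'s.

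\textbf{What each route buys.} Your route needs homotopy invariance only over a field, so your initial reduction to global presentations is actually superfluous. In exchange it leans on the strong input of conservativity of points, and on base change and the projection formula for $p_\sharp$ with $p$ a non-representable smooth map. The paper's route is purely descent-theoretic and never uses point-conservativity. It also produces the relative statement Lemma \ref{lem_blob}.1, which the paper needs anyway, e.g.\ for Lemma \ref{lem_blob}.2.

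\textbf{A minor imprecision.} For $B^j\Ga^{b}$ with $j\geq 2$, the Čech nerve of the point has terms $(B^{j-1}\Ga^{b})^{m}$, not affine spaces. So the induction should be on $j$ rather than a direct appeal to $\AAA^1$-invariance.
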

\begin{proof}
    \underline{Surjectivity}: Denote by $C$ the total space associated to the cofibre of $\widetilde{\CE}\rightarrow \CE$. As $p$ is smooth, $C$ has tor-amplitude in $(-\infty,-1]$. We have the Cartesian square: 
\[\begin{tikzcd}
	\tE & E \\
	X & C
	\arrow["p", from=1-1, to=1-2]
	\arrow[from=1-1, to=2-1]
	\arrow["\square"{description}, draw=none, from=1-1, to=2-2]
	\arrow[from=1-2, to=2-2]
	\arrow["{\iota_C}"', from=2-1, to=2-2]
\end{tikzcd}\]
The surjectivity of $p$ reduces to the surjectivity of $\iota_C$. The question being smooth local on $X$, we may assume $C$ has a presentation $(\CCC^{-n}\rightarrow\cdots\rightarrow\CCC^{-1})$. We do induction on $n$. We have 
\[\begin{tikzcd}
	X & {C'} & C \\
	& X & {C^{-n}}
	\arrow["{\iota_{C'}}"', from=1-1, to=1-2]
	\arrow["{\iota_C}", curve={height=-18pt}, from=1-1, to=1-3]
	\arrow[from=1-2, to=1-3]
	\arrow[from=1-2, to=2-2]
	\arrow["\square"{description, pos=0.55}, draw=none, from=1-2, to=2-3]
	\arrow[from=1-3, to=2-3]
	\arrow["{\iota_{C^{-n}}}"', from=2-2, to=2-3]
\end{tikzcd}\]
where $C'$ is the total space associated to $\fib(\CCC\rightarrow\CCC^{-n})[-1]$; it has tor-amplitude in $[-n+1,-1]$. It suffices to show the surjectivity of $\iota_{C'}$ and $\iota_{C^{-n}}$. The former follows from the induction hypothesis; the latter (and the base case $n=1$) is clear because $C^{-n}$ is the $n$-th classifying stack of some group over $X$.\\\\
\underline{Isomorphism}: The second isomorphism follows from the first because $p^!\simeq p^*\langle \dim p\rangle$ and the first (resp. second) isomorphism is equivalent to $p^*$ (resp. $p^!$) being fully faithful. We now prove $\id\isoto p_*p^*$. By the above, $p$ is smooth and surjective. Consider its associated Čech nerve:
\[\begin{tikzcd}
	\cdots & {G\X_{\tE}G} & G & \tE & E
	\arrow[shift right, from=1-1, to=1-2]
	\arrow[shift left, from=1-1, to=1-2]
	\arrow[shift right=3, from=1-1, to=1-2]
	\arrow[shift left=3, from=1-1, to=1-2]
	\arrow[from=1-2, to=1-3]
	\arrow[shift left=2, from=1-2, to=1-3]
	\arrow[shift right=2, from=1-2, to=1-3]
	\arrow[shift right, from=1-3, to=1-4]
	\arrow[shift left, from=1-3, to=1-4]
	\arrow["{a_0(=p)}", from=1-4, to=1-5]
\end{tikzcd}\]
Here $G= \tE\X_{E}\tE$. It is a derived vector bundle over $\tE$ with tor-amplitude in $(-\infty,0]$. Indeed, this is clear from the following diagram, noting that $\tE\rightarrow C$ factors as $\tE\xrightarrow{\pi_{\tE}} X\xrightarrow{\iota_C}C$:
\[\begin{tikzcd}
	G & \tE \\
	\tE & E \\
	X & C
	\arrow[from=1-1, to=1-2]
	\arrow[from=1-1, to=2-1]
	\arrow["\square"{description}, draw=none, from=1-1, to=2-2]
	\arrow[from=1-2, to=2-2]
	\arrow[from=2-1, to=2-2]
	\arrow[from=2-1, to=3-1]
	\arrow["\square"{description}, draw=none, from=2-1, to=3-2]
	\arrow[from=2-2, to=3-2]
	\arrow[from=3-1, to=3-2]
\end{tikzcd}\]
Denote the map $G\X_{\tE}G\X_{\tE}\cdots\X_{\tE}G\text{ ($n$-times)}\rightarrow E$ by $a_n$, and the structural map $G\X_{\tE}G\X_{\tE}\cdots\X_{\tE}G\rightarrow \tE$ by $q_n$. For every $A\in D(E)$, we compute: $A\simeq\lim_{\Delta}a_{n*}a_n^*A\simeq\lim_{\Delta}p_*q_{n*}q_n^*p^*A\simeq\lim_{\Delta}p_*p^*A\simeq p_*p^*A$. Here the first step follows from the construction of $D(E)$, and the third step is the content of Lemma \ref{lem_blob}.1 below.
\end{proof}

\begin{lemma}\label{lem_blob}
    Let $E$ be a derived vector bundle over a derived Artin stack $X$.\\
    (1) (\textit{cf.} \cite[Proposition A.10]{khan_virtual}) If $E$ has tor-amplitude in $(-\infty,0]$, then the adjunction maps $\id\rightarrow \pi_{E*}\pi_E^*$ and $\pi_{E!}\pi_E^!\rightarrow\id$ are isomorphisms.\\
    (2) If $E$ has tor-amplitude in $(-\infty,-1]$, then the adjunction map $\pi_E^*\pi_{E*}\rightarrow\id$ is an isomorphism. Furthermore, we have $\pi_{E}^!\simeq \pi_{E}^*\Td$ and $\pi_{E!}\simeq\pi_{E*}\Tmd$.
\end{lemma}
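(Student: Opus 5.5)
Both parts reduce, by smooth descent, to the case where $X$ is an affine derived scheme and $E$ has a presentation by a complex of locally free sheaves concentrated in non-positive degrees (resp. in degrees $\leq -1$). Indeed, $D(-)$ is a sheaf for the smooth topology, and the formation of $\pi_{E*}$, $\pi_E^*$, $\pi_{E!}$ and $\pi_E^!$ commutes with smooth base change on $X$. Whether the adjunction maps are isomorphisms can therefore be tested after pullback along a smooth surjection $U\rightarrow X$. Presentations exist smooth locally, by the discussion in §\ref{subsec_dVect}.

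For (1), I induct on the length of the presentation $(\CE^{-n}\rightarrow\cdots\rightarrow\CE^0)$. The first step is to filter $E$ as in the proof of Lemma \ref{lem_bootstrap}, exhibiting $\pi_E$ as a composite of two kinds of maps. The first kind is the projection of a classical vector bundle $E^0\rightarrow X$; this is $\AAA^1$-homotopy invariance, i.e.\ $\id\isoto\pi_*\pi^*$ for $\AAA^d$-bundles. The second kind is the projection of a torsor/fibration whose fibres are $B^m V$ for a vector bundle group $V$ ($m\geq1$); concretely, the fibre sequence $E\rightarrow E^{\leq -1}\rightarrow\cdots$ shows $E\rightarrow E'$ is a $V$-torsor-type map. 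Fully faithfulness of $\pi^*$ is stable under composition, so it suffices to treat these basic pieces.

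For $\pi:B_XV\rightarrow X$ with $V$ a vector bundle, I compute $\pi_*\pi^*$ via the Čech nerve of $X\rightarrow B_XV$. Its terms are $V^{\times_X n}$, so $\pi_*\pi^*A\simeq\lim_\Delta (\pi_{V^n})_*\pi_{V^n}^*A\simeq\lim_\Delta A\simeq A$ by homotopy invariance. Higher $B^mV$ are handled by the same induction, since the Čech nerve of $X\rightarrow B^mV$ has terms $B^{m-1}V$-products. The statement for $\pi_{E!}\pi_E^!$ follows by passing to left adjoints: $\id\isoto\pi_*\pi^*$ says $\pi^*$ is fully faithful, and $\pi^!\simeq\pi^*\langle\dim\pi\rangle$ since $\pi$ is smooth, so $\pi^!$ is fully faithful as well. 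Equivalently, the counit $\pi_!\pi^!\rightarrow\id$ is an isomorphism.

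For (2), tor-amplitude in $(-\infty,-1]$ means $\pi_E$ is smooth of relative dimension $d$, which is $\le 0$ here. The identities $\pi_E^!\simeq\pi_E^*\Td$ and $\pi_{E!}\simeq\pi_{E*}\Tmd$ then follow from the smooth case of the six-functor formalism together with the adjunctions. The more substantive claim is $\pi_E^*\pi_{E*}\isoto\id$. Since $\pi_E^*$ is already fully faithful by (1), this is equivalent to $\pi_E^*$ being essentially surjective, i.e.\ an equivalence. Here $\iota_E$ is a smooth surjection (Lemma \ref{lem_bootstrap} surjectivity argument). By descent along $\iota_E$, $D(E)\simeq\lim_\Delta D(X\X_EX\X_E\cdots)$, where $X\X_EX$ is the derived vector bundle $E[-1]$ over $X$, with tor-amplitude in $(-\infty,0]$. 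By (1), the pullbacks along all the face maps of this Čech nerve are fully faithful. The main obstacle is upgrading this to the statement that the cosimplicial diagram of categories is equivalent to the constant diagram $D(X)$, so that the limit is $D(X)$ via $\pi_E^*$. I would argue that each iterated fibre product is a derived vector bundle $\pi_n:E_n\rightarrow X$ of tor-amplitude $\leq0$, so $\pi_n^*$ is fully faithful. I would then show that an object of the limit is determined by its restriction along $\iota_E$ and that this restriction lies in the image of $\pi_E^*$. To see the latter, pull back its descent datum along the zero sections of the $E_n$, which identifies it with $\pi_0^*\iota^*$ of itself. If this bookkeeping becomes delicate, I would fall back on conservativity: test $\pi_E^*\pi_{E*}A\rightarrow A$ after the smooth surjection $\iota_E^*$, and use base change to reduce to (1) on $X\X_EX$.
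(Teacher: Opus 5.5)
Your argument for (1) is sound and differs from the paper's only in bookkeeping. You d\'evissage $\pi_E$ completely into pieces that become $\pi_V$ or $B^mV\to X$ after base change along the smooth surjections $\iota_F$, and you handle $B^mV$ by a Čech limit. Say the base-change step explicitly, since the pieces are torsors over intermediate bundles $F$, not over $X$. The paper instead reduces $\pi_F$ to $\iota_F$ via $\pi_F\circ\iota_F=\id$, and then to the shorter bundle $X\X_FX$, so it computes no limit.

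The gap is in the main claim of (2). Write $G^n$ for the $(n+1)$-fold product $X\X_E\cdots\X_EX$ and $q_n:G^n\to X$ for its structure map.
\begin{itemize}
\item The goal you set, that the cosimplicial diagram $D(G^\bullet)$ be equivalent to the constant diagram $D(X)$, is false. For $E=B_XV$ with $V$ of positive rank, $G^1=V$ and $q_1^*:D(X)\to D(V)$ is not essentially surjective.
\item Levelwise full faithfulness of the $q_n^*$ only gives full faithfulness on the limit, which you already have from (1).
\item What gives essential surjectivity is this. For an object $(A_n)$ of $\lim_\Delta D(G^\bullet)$, the descent datum identifies $A_n$ with $v^*A_0$ for a vertex map $v:G^n\to G^0=X$. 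Moreover $v=q_n$, because the whole Čech nerve lies over $X$ (as $\pi_E\circ\iota_E=\id$). So $(A_n)$ lies in the sub-diagram of essential images of the $q_n^*$. That sub-diagram is equivalent to the constant diagram, whose limit over $\Delta$ is $D(X)$.
\item Pulling back along the zero sections of the $G^n$, as you suggest, only recovers $A_0$ from $A_n$. It does not show that $A_n$ comes from $X$.
\item Your fallback also fails as stated. Applying $\iota_E^*$ to the counit gives $\pi_{E*}A\to\iota_E^*A$, and no base-change square computes $\pi_{E*}$ in terms of $X\X_EX$.
\end{itemize}

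The paper supplies exactly the missing step. It first computes $\pi_{E*}A\simeq\lim_\Delta q_{n*}q_n^*\iota_E^*A\simeq\iota_E^*A$, by descent along $\iota_E$ and (1) for the $q_n$. It then passes to left adjoints to get $\pi_E^*\simeq\iota_{E!}\Tmd$, so the counit becomes $\iota_{E!}\iota_E^!\to\id$. Only this map is tested after the smooth surjection $\iota_E$, where base change reduces it to (1) for $G^1\to X$. Alternatively, apply base change to the map $\iota_{E\sharp}\to\pi_E^*$ adjoint to $\id\simeq\iota_E^*\pi_E^*$. After $\iota_E^*$ it becomes the counit $q_{1\sharp}q_1^*\to\id$, so $\pi_E^*\simeq\iota_{E\sharp}$ with no limit computation, and one concludes as before.

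Finally, $\pi_{E!}\simeq\pi_{E*}\Tmd$ does not follow from smoothness (it fails for $\Gm\to\Spec(k)$). Poincar\'e duality only gives $\pi_{E!}\simeq\pi_{E\sharp}\Tmd$. Identifying the left adjoint $\pi_{E\sharp}$ of $\pi_E^*$ with its right adjoint $\pi_{E*}$ needs $\pi_E^*$ to be an equivalence, so this must come after the main claim, as in the paper. Also, $d$ need not be $\le 0$ here: for $\CE=V[2]$ one has $d=\mathrm{rk}\,V$.
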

\begin{proof}
    (1) As in the proof of Isomorphism above, it suffices to prove the first isomorphism. The question being smooth local (by smooth base change), we may assume the perfect complex $\CE$ has a presentation $(\CE^{-n}\rightarrow\cdots\rightarrow\CE^{-1}\rightarrow\CE^0)$. We do induction on $n$. Consider the canonical map $\CE^0\rightarrow\CE$, take the cofibre and pass to total spaces, we get the Cartesian square in the following diagram, where $F$ has tor-amplitude in $[-n,-1]$:
\[\begin{tikzcd}
	{E^0} & E \\
	X & F \\
	& X
	\arrow[from=1-1, to=1-2]
	\arrow["{\pi_{E^0}}"', from=1-1, to=2-1]
	\arrow["\square"{description}, draw=none, from=1-1, to=2-2]
	\arrow["\alpha", from=1-2, to=2-2]
	\arrow["{\iota_F}"', from=2-1, to=2-2]
	\arrow["{\pi_F}", from=2-2, to=3-2]
\end{tikzcd}\]
The statement for $\pi_E$ then reduces to those for $\alpha$ and $\pi_F$. As $\iota_F$ is smooth and surjective, the statement for $\alpha$ reduces to that for $\pi_{E^0}$, which follows from $\AAA^1$-invariance. For $\pi_F$, consider the diagram:
\[\begin{tikzcd}
	{F'} & X \\
	X & F \\
	& X
	\arrow[from=1-1, to=1-2]
	\arrow["{\pi_{F'}}"', from=1-1, to=2-1]
	\arrow["\square"{description}, draw=none, from=1-1, to=2-2]
	\arrow["{\iota_F}", from=1-2, to=2-2]
	\arrow["{\iota_F}"', from=2-1, to=2-2]
	\arrow["{\pi_F}", from=2-2, to=3-2]
\end{tikzcd}\]
Note $F'$ has tor-amplitude in $[-n+1,0]$. As $\pi_F\circ\iota_F=\id$, the statement for $\pi_F$ reduces to that for $\iota_F$, then further to that for $\pi_{F'}$, which is known by the induction hypothesis.\\

(2) We first show $\pi_E^*\pi_{E*}\isoto \id$. Consider the following diagram:
\[\begin{tikzcd}
	\cdots & {G\X_XG} & G & X & E & X
	\arrow[shift right, from=1-1, to=1-2]
	\arrow[shift left, from=1-1, to=1-2]
	\arrow[shift right=3, from=1-1, to=1-2]
	\arrow[shift left=3, from=1-1, to=1-2]
	\arrow[from=1-2, to=1-3]
	\arrow[shift left=2, from=1-2, to=1-3]
	\arrow[shift right=2, from=1-2, to=1-3]
	\arrow[shift right, from=1-3, to=1-4]
	\arrow[shift left, from=1-3, to=1-4]
	\arrow["{\iota_E(=a_0)}", from=1-4, to=1-5]
	\arrow["{\pi_E}", from=1-5, to=1-6]
\end{tikzcd}\]
Here the terms to the left of $E$ form the Čech nerve associated to $\iota_E$. Note $G=X\X_EX$ is the total space associated to $\CE[-1]$, hence a derived vector bundle over $X$ with tor-amplitude in $(-\infty,0]$. Denote the map $G\X_XG\X_X\cdots\X_XG\text{ ($n$-times)}\rightarrow E$ by $a_n$, and the structural map $G\X_XG\X_X\cdots\X_XG\rightarrow X$ by $q_n$. For every $A\in D(E)$, we compute:
\begin{align*}
\pi_{E*}A
&\simeq\pi_{E*}\lim_{\Delta}a_{n,*}a_n^*A\,\,(\text{$D(-)$ is a sheaf in the smooth topology})\\
&\simeq\lim_{\Delta}\pi_{E*}a_{n,*}a_n^*A\\
&\simeq\lim_{\Delta}q_{n*}q^*_n\iota_E^*A\,\,(\text{using $\pi_E\circ \iota_E=\id$})\\
&\simeq \iota_E^*A\,\,(\text{$\id\isoto q_{n*}q_n^*$ by (1)})
\end{align*}
We thus get an isomorphism $\pi_{E*}\simeq \iota_E^*$. As $\iota_E^*\simeq \iota_E^!\Td$ ($d$ being the rank of $E$), take left adjoints, we get $\pi_E^*\simeq \iota_{E!}\Tmd$, hence $\pi_E^*\pi_{E*}\simeq \iota_{E!}\iota_E^!$. It suffices to show $\iota_{E!}\iota_E^!\isoto\id$. Consider the following diagram:
\[\begin{tikzcd}
	G & X \\
	X & E
	\arrow[from=1-1, to=1-2]
	\arrow[from=1-1, to=2-1]
	\arrow["\square"{description}, draw=none, from=1-1, to=2-2]
	\arrow["{a_0}", from=1-2, to=2-2]
	\arrow["{a_0}"', from=2-1, to=2-2]
\end{tikzcd}\]
As $\iota_E$ is smooth and surjective, the question reduces to the corresponding statement for $G\rightarrow X$, which is known by (1).\\\\
Combine this with (1), we see that $\pi_E^*$ is an equivalence. The “Furthermore” part then follows from Poincaré duality (note $\pi_E$ is smooth).
\end{proof}

\subsection{Definition and involutivity}
Let $X$ be a derived Artin stack, and $E$ be a derived vector bundle on $X$. The definition of the Fourier transform $F: D(E)\rightarrow D(E')$ is identical to Definition \ref{def_fourier_transform}. Here the dual $E'$ of $E$ is by definition the total space associated to $\CE':=\mathrm{R\underline{Hom}}_{D(\CO_X)}(\CE,\CO_X)$, the dual of $\CE$, and the map $m$ is the composition $E\X_XE'\rightarrow \AAA^1_X\rightarrow\AAA^1$, where $E\X_XE'\rightarrow \AAA^1_X$ is induced by the canonical map $\CE\OX\CE'\rightarrow\CO_X$. We want to establish similar properties for $F$ as in §\ref{sec_fourier_on_vect}. First note that the construction of $F'F\simeq [-1]^*\Tmd$ in Proposition \ref{prop_involutivity} works only when the zero section $\iota_E: X\rightarrow E$ is a closed immersion, which is the case if and only if $\CE$ has tor-amplitude in $[0,+\infty)$. We follow the method of \cite[\nopp §A.3.1]{FYZ} to overcome this difficulty. 

\begin{remark}\label{rmk_base_func_still_hold}
    Lemmas \ref{lem_F_base_change}.1, \ref{lem_F_base_change}.2, \ref{lem_F_functoriality}.1 still hold, being formal consequences of proper base change and the projection formula.
\end{remark}

\begin{lemma}[{\cite[\nopp §A.2.2]{FYZ}}]\label{lem_base_change_smooth}
    Let $\alpha: \tE\rightarrow E$ be a map of derived vector bundles over a derived Artin stack $X$. Assume $\alpha$ is either smooth or a closed immersion. Then we have a canonical isomorphism $\alpha'_!F\Td\simeq \tF\alpha^*\Tdt$.
\end{lemma}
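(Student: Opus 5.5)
The plan is to reduce everything to the kernel, exactly as in the proof of Lemma \ref{lem_F_functoriality}.1, but now with $\alpha'$ in the role of the pushforward. Both sides are integral transforms $D(E)\to D(\tE')$. Write $\tE\X_XE'$ for the middle space, with projections $q:\tE\X_XE'\to\tE$ and $q':\tE\X_XE'\to E'$, and pairing $\tilde m$. Note that $\tilde m$ factors both as $m\circ(\alpha\X\id)$ and as $\widetilde{m}_{\tE}\circ(\id\X\alpha')$.

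\textbf{The two kernels.} By proper base change and the projection formula:
\begin{itemize}
\item the functor $\alpha'_!F$ is the integral transform with kernel $(\id\X\alpha')_!\,m^*\CL$ on $E\X_X\tE'$;
\item the functor $\tF\alpha^*$ is the integral transform with kernel $(\alpha\X\id)^*\,\widetilde{m}_{\tE}^*\CL$, pushed suitably, on $E\X_X\tE'$.
\end{itemize}
Concretely, pulling back along $\alpha$ and using $\tilde m$, one sees that $\tF\alpha^*(A)\simeq \tilde p'_!(\tilde p^*\alpha^*A\OX \tilde m^*\CL)$. Since $\tilde m^*\CL = (\id\X\alpha')^*\widetilde{m}_{\tE}^*\CL$ on $\tE\X_X E'$, we can rewrite
\[\tF\alpha^* \simeq \alpha'_!\, q'_{\ldots}(\cdots),\]
so both functors become $!$-pushforwards from $\tE\X_X E'$. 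The isomorphism then amounts to comparing $(\alpha\X\id)_!$ with $(\alpha\X\id)^*$ up to twist.

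\textbf{Smooth case.} If $\alpha$ is smooth, then $\alpha\X\id$ is smooth of relative dimension $d-\td$ (rank difference, locally constant). Poincaré duality gives $(\alpha\X\id)^!\simeq(\alpha\X\id)^*\langle d-\td\rangle$ after base change. The required isomorphism then follows from the projection formula for $(\alpha\X\id)_!$, together with the fact that $\alpha$ smooth forces the dual $\alpha'$ to be a quasi-smooth closed-type map. Concretely, $F\alpha_!\simeq\alpha'^*\tF$ (Lemma \ref{lem_F_functoriality}.1, valid by Remark \ref{rmk_base_func_still_hold}); composing with $\alpha'_!$ and using the counit $\alpha'_!\alpha'^*\to$ is not directly available, so instead I would compute the kernel directly.

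\textbf{Closed immersion case.} If $\alpha$ is a closed immersion, then $\alpha'$ is smooth with cofibre of tor-amplitude in $(-\infty,-1]$ dualized to $[1,\infty)$. I would write $E' \to \tE'$ as a torsor under the derived vector bundle $C'$, the dual of the cofibre $C$ of $\widetilde{\CE}\to\CE$, with $C'$ pulled back. Then
\[\alpha'_!(p'^{\ldots})\ \text{involves}\ \pi_{C'!}\,m_C^*\CL,\]
the Fourier transform of $\1$ on $C$. By the argument of Proposition \ref{prop_involutivity} (last paragraph), which requires only that $\iota_C$ be a closed immersion, this is $\iota_{C*}\1\langle -\mathrm{rk}\,C\rangle$. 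That condition holds here, since $C$ has tor-amplitude in $[0,\infty)$ when $\alpha$ is a closed immersion, up to the correct sign convention. Plugging this delta kernel in cuts $E\X_XE'$ down to $\tE\X_X E'$ and yields $\tF\alpha^*$ with twist $\langle \td-d\rangle$.

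Symmetrically, the smooth case reduces, via the torsor structure on $\alpha$ itself, to the same delta computation $\pi_{C!}$ on the dual side. Alternatively it reduces to Lemma \ref{lem_blob}, which identifies $\pi_!$ with $\pi_*$ up to twist.

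\textbf{Main obstacle.} The hard step is producing the vanishing and delta isomorphism $\pi_{C'!}m^*\CL\simeq\iota_*\1\langle\cdot\rangle$ for derived $C'$ without a presentation. I would first do it smooth locally with a presentation, reducing via the Cartesian square $E^{\geq0}\to E^{0}$ to the classical Proposition \ref{prop_involutivity}. I would then glue the local isomorphisms by smooth descent, checking canonicity, since the map is constructed globally by adjunction from $i_0^*\CL\simeq\1$.
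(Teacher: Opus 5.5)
You have the right heuristic: the pairing is constant along the relevant fibres over the good locus and is a non-zero linear form elsewhere. However, the proposal never writes down a morphism between $\alpha'_!F$ and $\tF\alpha^*\langle\td-d\rangle$, and for the smooth case (the case the paper actually proves) it gives no argument.

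\textbf{The support statement is assumed, not proved.} The rewriting $\tF\alpha^*\simeq\alpha'_!q'_{\ldots}(\cdots)$ does not follow from the factorisation of $\tilde m$ on $\tE\X_XE'$. The functor $\tF\alpha^*$ is a pushforward from $\tE\X_X\tE'$. In the smooth case, $\alpha'$ is a closed immersion, and replacing that pushforward by one from $\tE\X_XE'$ is exactly the support statement the lemma asserts.

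\textbf{Poincar\'e duality and the projection formula do not supply it.} Let $q\colon\tE\X_XE'\to E\X_XE'$ be the base change of $\alpha$. Duality only gives $q^!\simeq q^*\langle\td-d\rangle$ (the twist is $\langle\td-d\rangle$, not $\langle d-\td\rangle$). What is actually needed is:
\begin{itemize}
\item the non-formal fact that the counit $q_!q^!\to\id$ is an isomorphism (Lemma~\ref{lem_bootstrap});
\item the vanishing of $\tF\alpha^*A$ at every point of $\tE'\setminus E'$.
\end{itemize}

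\textbf{The torsor fallback is only local.} The torsors $\tE\to E$ (smooth case) and $E'\to\tE'$ (closed case) need not be trivial, even smooth locally on $X$. For example, with $\widetilde{\CE}=0$ and $\CE=\CO_X[1]$, the map $\alpha$ is $X\to B\mathbf{G}_{a,X}$. The map you call ``constructed globally by adjunction'' lives on $C$, not on $E\X_X\tE'$, so it gives no global comparison of kernels. Gluing local isomorphisms by descent would need coherence data you do not have.

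\textbf{What is missing.} You need to construct a global morphism first, and only then test it at points (Lemma~\ref{lem_point_conservative}). At that stage a local torsor computation could even serve as the pointwise check. In the smooth case the paper uses
\[
\alpha'_!FA\isoot\alpha'_!p'_!q_!q^*(p^*A\OX m^*\CL)\langle\td-d\rangle\simeq\tp'_!i_*i^*(\tp^*\alpha^*A\OX\widetilde{m}^*\CL)\langle\td-d\rangle\leftarrow\tF\alpha^*A\langle\td-d\rangle,
\]
where $i\colon\tE\X_XE'\hookrightarrow\tE\X_X\tE'$ and the last arrow is the unit of $i^*\dashv i_*$. Its cone is supported over $\tE'\setminus E'$. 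There, the fibres of $\alpha$ are derived vector bundles over a point with tor-amplitude in $(-\infty,0]$ carrying a non-zero linear form. Such a form is smooth, so Lemmas~\ref{lem_bootstrap} and~\ref{lem_coho_L_zero} kill the pushforward. This same fibrewise argument settles your ``main obstacle'' directly, with no presentations, no reduction to Proposition~\ref{prop_involutivity}, and no gluing.

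\textbf{The closed-immersion case.} Your idea becomes a proof in the same way. Write $\id\X\alpha'\colon E\X_XE'\to E\X_X\tE'$ and $\alpha\X\id\colon\tE\X_X\tE'\to E\X_X\tE'$, and use the unit
\[
(\id\X\alpha')_!m^*\CL\to(\alpha\X\id)_*(\alpha\X\id)^*(\id\X\alpha')_!m^*\CL\simeq(\alpha\X\id)_*\widetilde{m}^*\CL\langle\td-d\rangle .
\]
The isomorphism on the right comes from base change, the projection formula, and $\alpha'_!\1\simeq\1\langle\td-d\rangle$, which follows from Lemma~\ref{lem_bootstrap}. Then check that this unit is an isomorphism at points. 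Off the image of $\tE\X_X\tE'$, the fibres of $\alpha'$ again carry a non-zero linear form, so the same vanishing argument applies.
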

\begin{proof}
    The proof in \textit{loc. cit.} applies \textit{mutatis mutandis} to our set-up. We sketch the $\alpha$ smooth case. Consider the diagram (as in Lemma \ref{lem_F_functoriality}):
\[\begin{tikzcd}[row sep=0.8cm, column sep=0.5cm]
	& {\widetilde{E}'} && {E'} & \\
	{\widetilde{E}\times_X\widetilde{E}'} && {\widetilde{E}\times_X E'} && {E\times_X E'} \\
	& {\widetilde{E}} && E
	\arrow["{\alpha'}"', hook', from=1-4, to=1-2]
	\arrow["{\tp'}", from=2-1, to=1-2]
	\arrow["\tp"', from=2-1, to=3-2]
	\arrow["\square"{description, pos=0.4}, draw=none, from=2-3, to=1-2]
	\arrow[from=2-3, to=1-4]
	\arrow["i"', hook', from=2-3, to=2-1]
	\arrow["q", two heads, from=2-3, to=2-5]
	\arrow[from=2-3, to=3-2]
	\arrow["\square"{description}, draw=none, from=2-3, to=3-4]
	\arrow["{p'}"', from=2-5, to=1-4]
	\arrow["p", from=2-5, to=3-4]
	\arrow["\alpha"', two heads, from=3-2, to=3-4]
\end{tikzcd}\]
Denote by $\td$ (resp. $d$) the rank of $\tE$ (resp. $E$). For $A\in D(E)$, we have $\alpha'_!FA=\alpha'_!p'_!(p^*A\otimes m^*\CL)\isoot \alpha'_!p'_!q_!q^*(p^*A\otimes m^*\CL)\langle \td-d\rangle\simeq\tp'_!i_*i^*(\tp^*\alpha^*A\otimes \widetilde{m}^*\CL)\langle \td-d\rangle\leftarrow \tF\alpha^*A\langle \td-d\rangle$, where the second step uses Lemma \ref{lem_bootstrap} and the third step follows from standard diagram chasing. To show the last arrow is an isomorphism, using Lemma \ref{lem_point_conservative}, it suffices to show $(\tp'_!(\tp^*\alpha^*A\otimes \widetilde{m}^*\CL))|_v=0$ for every point $v\rightarrow \tE'\backslash E'$.\\\\
Consider the following diagram:
\[\begin{tikzcd}
	\tE & {\tE\X_X\tE'} & {C_v} & {\AAA^1_v} \\
	E & {E\X_X\tE'} & {E_v} \\
	X & {\tE'} & v
	\arrow["\alpha"', from=1-1, to=2-1]
	\arrow["\square"{description, pos=0.6}, draw=none, from=1-1, to=2-2]
	\arrow["\tp"', from=1-2, to=1-1]
	\arrow[from=1-2, to=2-2]
	\arrow["\square"{description, pos=0.4}, draw=none, from=2-2, to=1-3]
	\arrow[from=1-3, to=1-2]
	\arrow["{\widetilde{m}_v}", from=1-3, to=1-4]
	\arrow["{\alpha_v}", from=1-3, to=2-3]
	\arrow["\pi_E"', from=2-1, to=3-1]
	\arrow["\square"{description}, draw=none, from=2-1, to=3-2]
	\arrow[from=2-2, to=2-1]
	\arrow[from=2-2, to=3-2]
	\arrow["\square"{description, pos=0.45}, draw=none, from=2-2, to=3-3]
	\arrow[from=2-3, to=2-2]
	\arrow[from=2-3, to=3-3]
	\arrow["\pi_{\tE'}", from=3-2, to=3-1]
	\arrow[from=3-3, to=3-2]
\end{tikzcd}\]
We need to show $\pi_{v!}(\alpha_v^*(A|_{E_v})\otimes \widetilde{m}_v^*\CL)=0$, where $\pi_v$ is $C_v\rightarrow v$. It suffices to show $(\alpha_{v!}(\alpha_v^*(A|_{E_v})\otimes \widetilde{m}_v^*\CL))_w\simeq ((A|_{E_v})\otimes\alpha_{v!}\widetilde{m}_v^*\CL)|_w=0$ for every point $w\rightarrow E_v$ (Lemma \ref{lem_point_conservative}). But $C_w:=C_v\X_{E_v}w\xrightarrow{\widetilde{m}_w}\AAA^1_w$ is a non-zero linear map (as $v$ lands in $\tE'\backslash E'$) and $C_w$ is a derived vector bundle over $w$ with tor-amplitude in $(-\infty,0]$ (as $\alpha$ is smooth), one shows that $\widetilde{m}_w$ is then necessarily smooth, so $\pi_{w!}\widetilde{m}_w^*\CL=0$ using Lemma \ref{lem_bootstrap} and Lemma \ref{lem_coho_L_zero}, where $\pi_{w}$ is $C_w\rightarrow w$. This completes the proof.
\end{proof}

\begin{definition}[{\cite[Definition A.2.5]{FYZ}}]\label{def_inv_datum}
    Let $E$ be a derived vector bundle of rank $d$ over a derived Artin stack $X$. An \underline{involutivity datum} on $E$ is a pair $(\eta_E,\eta_{E'})$ of natural isomorphisms $\eta_E: F'\circ F\isoto [-1]^*\Tmd$ and $\eta_{E'}: F\circ F'\isoto [-1]^*\Tmd$, such that the following two diagrams commute: 
\[\begin{tikzcd}
	F & {F\circ F'\circ F[-1]^*\Td} & F & {F'} & {F'\circ F\circ F'[-1]^*\Td} & {F'}
	\arrow["{(-1)^d}", curve={height=24pt}, from=1-1, to=1-3]
	\arrow["{F\circ \eta_{E}}"', from=1-2, to=1-1]
	\arrow["\sim", from=1-2, to=1-1]
	\arrow["{\eta_{E'}\circ F}", from=1-2, to=1-3]
	\arrow["\sim"', from=1-2, to=1-3]
	\arrow["{(-1)^d}", curve={height=24pt}, from=1-4, to=1-6]
	\arrow["{F'\circ\eta_{E'}}"', from=1-5, to=1-4]
	\arrow["\sim", from=1-5, to=1-4]
	\arrow["\sim"', from=1-5, to=1-6]
	\arrow["{\eta_{E}\circ F'}", from=1-5, to=1-6]
\end{tikzcd}\]
\end{definition}

\begin{remark}\label{rmk_inv_datum_basics}
    (1) By definition, an involutivity datum on $E$ is the same data as an involutivity datum on $E'$. Furthermore, in fact the commutativity of one diagram above implies the other: suppose the left diagram commutes, to show the commutativity of the right diagram, it suffices to show it after right composing with $F$, then the claim follows from the evident commutativity of the following diagram:
\[\begin{tikzcd}
	{F'\circ F} & {F'\circ F\circ F'\circ F[-1]^*\Td} & {F'\circ F} \\
	& {F'\circ F} & \id
	\arrow["{(-1)^d}", curve={height=12pt}, from=1-1, to=2-2]
	\arrow["\sim", from=1-2, to=1-1]
	\arrow["{F'\circ\eta_{E'}\circ F}"', from=1-2, to=1-1]
	\arrow["\sim"', from=1-2, to=1-3]
	\arrow["{\eta_{E}\circ F'\circ F}", from=1-2, to=1-3]
	\arrow["\simd"', from=1-2, to=2-2]
	\arrow["{F'\circ F\circ \eta_{E}}", from=1-2, to=2-2]
	\arrow["{\eta_E}", from=1-3, to=2-3]
	\arrow["\simd"', from=1-3, to=2-3]
	\arrow["{\eta_E}", from=2-2, to=2-3]
	\arrow["\sim"', from=2-2, to=2-3]
\end{tikzcd}\]
(2) An involutivity datum can be presented in two other equivalent ways: (i) as a pair $(\alpha_E, \alpha_{E'})$ of isomorphisms $\alpha_E: \iota_{E!}\1_X\isoto F'\1_{E'}\Td$ and $\alpha_{E'}: \iota_{E'!}\1_X\isoto F\1_{E}\Td$ satisfying a certain condition\footnote{Namely, the corresponding pair $(\eta_E, \eta_{E'})$ satisfies the sign condition in Definition \ref{def_inv_datum}.}; (ii) as a pair $(\alpha'_E, \alpha'_{E'})$ of isomorphisms $\alpha_E':\1_X\rightarrow\iota_E^!F'\1_{E'}\Td$ and $\alpha_{E'}':\1_X\rightarrow\iota_{E'}^!F\1_{E}\Td$ satisfying a certain condition. Indeed, for (i): in one direction, given $(\eta_E, \eta_{E'})$, apply $\eta_E$ to $\iota_{E!}$, we get $\alpha_E$, similarly for $\alpha_{E'}$; the other direction is shown in \cite[Lemma A.2.4]{FYZ} (\textit{cf.} Proposition \ref{prop_involutivity}). For (ii): note that $\alpha_E$ and $\alpha_{E'}$ are related by adjunction, and \cite[Lemma A.3.3]{FYZ} (proved in the same way in our context) shows $\alpha'_E$ is an isomorphism if $\alpha_E$ is, similarly for $\alpha_{E'}$ and $\alpha'_{E'}$.\\\\
(3) A crucial observation is that, on a connected $X$, an involutivity datum (if it exists) is unique up to a scalar. More precisely: consider the $(\alpha'_E, \alpha'_{E'})$ presentation of an involutivity datum as in (2), then, by Lemma \ref{lem_hom_of_unit_conn_comp}, the set of isomorphisms $\1_X\rightarrow\iota_E^!F'\1_{E'}\Td$ is $\mathrm{Aut}(\1_X)\simeq L^{\X}$; and $\alpha'_E$ and $\alpha'_{E'}$ determine each other by the sign condition in Definition \ref{def_inv_datum}.
\end{remark}

\begin{lemma}[Base change of an involutivity datum]\label{lem_inv_base_change}
    Given a pullback digram of derived vector bundles 
\[\begin{tikzcd}
	{\widetilde{E}} & E \\
	{\widetilde{X}} & X
	\arrow["f", from=1-1, to=1-2]
	\arrow[from=1-1, to=2-1]
	\arrow["\square"{description}, draw=none, from=1-1, to=2-2]
	\arrow[from=1-2, to=2-2]
	\arrow["f"', from=2-1, to=2-2]
\end{tikzcd}\]
An involutivity datum $(\eta_E,\eta_{E'})$ on $E$ canonically induces an involutivity datum $(f^*\eta_E,f^*\eta_{E'})$ on $\tE$.
\end{lemma}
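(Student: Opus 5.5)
The plan is to use the base change isomorphisms $f^*F\simeq \tF f^*$ and $f^*F'\simeq \tF' f^*$ (Lemma \ref{lem_F_base_change}.2, which still holds by Remark \ref{rmk_base_func_still_hold}) to transport $\eta_E$ to $\tE$, and then to check the sign condition of Definition \ref{def_inv_datum}. The difficulty is that $f^*\eta_E$ is a priori a natural transformation $f^*F'F\isoto f^*[-1]^*\Tmd$ of functors out of $D(E)$, not a transformation between functors on $D(\tE)$. So the first step is to decide what ``$f^*\eta_E$'' should mean on all of $D(\tE)$, not just on objects pulled back from $E$.

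I would not try to descend $\eta_E$ itself. Instead I would work with the presentation (i) of Remark \ref{rmk_inv_datum_basics}.2. Applying $\eta_E$ to $\iota_{E!}\1_X$ gives $\alpha_E:\iota_{E!}\1_X\isoto F'\1_{E'}\Td$. Then $f^*\alpha_E$ is, after base change, an isomorphism $\iota_{\tE!}\1_{\widetilde X}\simeq f^*\iota_{E!}\1_X\isoto f^*F'\1_{E'}\Td\simeq \tF'\1_{\tE'}\Td$. Here I use $f^*\iota_{E!}\simeq\iota_{\tE!}f^*$ (proper base change), $f^*\1=\1$, and base change of $F'$. The rank is unchanged under pullback. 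Doing the same with $\alpha_{E'}$ gives $f^*\alpha_{E'}$. By \cite[Lemma A.2.4]{FYZ}, as used in Remark \ref{rmk_inv_datum_basics}.2, such a pair of kernel isomorphisms produces a pair of natural isomorphisms $(\widetilde\eta_{\tE},\widetilde\eta_{\tE'})$. The reason is that $\tF'\tF$ is the integral transform with kernel $\mathrm{add}^*$ of $\pr_!m^*\CL$, exactly as in the proof of Proposition \ref{prop_involutivity}, and that kernel is identified with $\tF'\1_{\tE'}$-type data. Compatibility with base change holds because every step of that construction (Künneth, additivity of $\CL$, proper base change) commutes with $f^*$. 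In particular, for $A\in D(E)$, $\widetilde\eta_{\tE}$ evaluated on $f^*A$ agrees with $f^*\eta_E(A)$ under the base change isomorphisms. This is the sense in which the datum is $(f^*\eta_E,f^*\eta_{E'})$.

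It remains to verify the sign condition, and I expect this to be the main obstacle, since it requires comparing two natural transformations on all of $D(\tE)$. I would reduce it in two steps. First, by Remark \ref{rmk_inv_datum_basics}.1 it suffices to check the left diagram. Second, the two composites $F\circ F'\circ F[-1]^*\Td\rightrightarrows F$ are again induced by maps of kernels on $\tE\X_{\widetilde X}\tE'$, because all functors involved are integral transforms and $\eta$ comes from a kernel isomorphism. So it suffices to check the equality of the corresponding kernel maps $\tF\1$-type isomorphisms. These kernel maps are the $f^*$-pullbacks of the kernel maps on $E\X_XE'$. The assertion ``$(F\circ\eta_E)=(-1)^d(\eta_{E'}\circ F)$ on kernels'' holds on $X$ by hypothesis, and pulling back gives it on $\widetilde X$.

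The bookkeeping step I would be most careful with is that the base change isomorphisms for $F$, $F'$ and for the composites $F'F$, $FF'F$ are mutually coherent. In other words, the isomorphism $f^*(F'F)\simeq\tF'\tF f^*$ obtained by composing two base changes must agree with the one obtained from base change of the composite kernel. I would argue that both come from the same proper base change and projection formula squares, so they agree by the coherence of the six-functor formalism.
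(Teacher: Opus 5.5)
Your proposal is correct and follows essentially the paper's route. You evaluate $\eta_E$ on $\iota_{E!}\1_X$, transport the resulting isomorphism $\tF'\1_{\tE'}\isoto\iota_{\tE!}\1_{\widetilde X}\Tmd$ to $\tE$ via proper base change and $f^*F'\simeq\tF'f^*$, and promote it back to $\tF'\tF\isoto[-1]^*\Tmd$ via the $\alpha$-presentation of Remark \ref{rmk_inv_datum_basics}.2. The paper simply calls the sign condition ``easy to verify''; your kernel-level check fills this in, provided you note one extra point: the two kernel maps differ by an automorphism of the invertible kernel $m^*\CL$, i.e.\ by a locally constant scalar, so the hypothesis on functors over $X$ determines it and it pulls back to $\widetilde X$.
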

\begin{proof}
    We describe the construction of $f^*\eta_E$, the other one is similar, and it is easy to verify that they form an involutivity datum. Apply $f^*$ to $\eta_E$ and use Remark \ref{rmk_base_func_still_hold}, we get $f^*F'F\simeq \tF'\tF f^*\isoto f^*[-1]^*\Tmd$, apply this to $\iota_{E!}\1_X$ and use proper base change, we get $\tF'\1_{\tE'}\isoto \iota_{\tE!}\1_{\widetilde{X}}\Tmd$. As discussed in Remark \ref{rmk_inv_datum_basics}.2, this induces an isomorphism $f^*\eta_E: \tF'\tF\isoto [-1]^*\Tmd$.
\end{proof}

\begin{example}[{\cite[\nopp §A.3.1]{FYZ}}]\label{ex_involutivity_datum_examples}
    (1) For a (classical) vector bundle $E$ over a derived Artin stack, Proposition \ref{prop_involutivity} (proved in the same way over a derived Artin stack) gives an involutivity datum $(\eta_{E},\eta_{E'})$ on $E$ (the sign condition is checked in Remark \ref{rmk_sign_classical}). Note that this construction commutes with base change, \textit{i.e.}, for every base change $f: \tE\rightarrow E$ as above, $(f^*\eta_E,f^*\eta_{E'})\simeq(\eta_{\tE},\eta_{\tE'})$.\\\\
    (2) For a derived vector bundle $E$ over a point $X$, we construct an ``obvious'' involutivity datum on $E$. The perfect complex $\CE$ has a canonical presentation $\CE\simeq\oplus_{i\in\ZZ}(\CH^i\CE)[-i]$. The Fourier transform preserves box products (by the Künneth formula and the additivity of the Artin–Schreier motive), so it suffices to construct an involutivity datum on $E^i$, the total space of $(\CH^i\CE)[-i]$ (we are implicitly using the $(\alpha'_E, \alpha'_{E'})$ presentation of an involutivity datum).\\\\
    If $i=0$ this is given in (1). If $i>0$, then $X=E^i_{\mathrm{cl}}$ and we identify $D(X)$ and $D(E^i)$ by $\pi^*_{E^i}$ and $\pi_{E^i*}$. The dual $(E^i)'$ is the $i$-th classifying stack of a product of $\Ga$'s over $X$. By Lemma \ref{lem_blob}, $\pi_{(E^i)'}^*$ and $\pi_{(E^i)'*}$ are inverse to each other, and we have $\pi_{(E^i)'}^!\simeq \pi_{(E^i)'}^*\langle d^i\rangle$ and $\pi_{(E^i)'!}\simeq\pi_{(E^i)'*}\langle -d^i\rangle$. Identify $D(X)$ and $D((E^i)')$ by $\pi_{(E^i)'}^*$ and $\pi_{(E^i)'*}$. Then $F$ (resp. $F'$) is the endofunctor $\id$ (resp. $\langle -d^i\rangle$) on $D(X)$. We take $\eta_{E^i}=(-1)^{d^i}$ and $\eta_{(E^i)'}=\id$. The $i<0$ case is dual to this one.
\end{example}

\begin{lemma}[{\textit{cf.} \cite[Lemma A.3.1]{FYZ}}]\label{lem_small_open_inv_datum}
    Let $E$ be a derived vector bundle over a derived Artin stack $X$. Assume $\CE$ admits a presentation. Then:\\
    (1) There is a canonical involutivity datum on $E$.\\
    (2) The construction in (1) commutes with base change.\\
    (3) If $E$ is a classical vector bundle or if $X$ is a point, then this involutivity datum coincides with the one in Example \ref{ex_involutivity_datum_examples}.
\end{lemma}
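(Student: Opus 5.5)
Given a presentation $\CE\cong(\CE^{-n}\to\cdots\to\CE^m)$, I will reduce to the classical vector bundle $E^0$ via the Cartesian square $E\xleftarrow{}E^{\geq0}\to E^0$, $E\xrightarrow{i}E^{\leq0}\xleftarrow{p}E^0$, where $i$ is a closed immersion and $p$ is smooth. Concretely, first treat $E^{\leq0}$. The map $p\colon E^0\to E^{\leq0}$ is smooth, and dually $p'\colon (E^{\leq0})'\to (E^0)'$ is a closed immersion. By Lemma \ref{lem_base_change_smooth} we have $p'_!F_{E^{\leq0}}\langle d_{\leq0}\rangle\simeq F_{E^0}p^*\langle d_0\rangle$, and the analogous isomorphism for $p'$ (a closed immersion, so the lemma applies) identifies $p_!F'_{E^{\leq0}}$ with $F'_{E^0}p'^*$ up to twist. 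I will use the $(\alpha_E,\alpha_{E'})$ presentation of Remark \ref{rmk_inv_datum_basics}.2. I need an isomorphism $\iota_{E^{\leq0}!}\1\isoto F'_{E^{\leq0}}\1\langle d\rangle$. Since $p^*$ is fully faithful (Lemma \ref{lem_bootstrap}), it suffices to produce it after applying $p^*$ and to check that the result descends. I will compute $p^*F'_{E^{\leq0}}\1$ via Lemma \ref{lem_base_change_smooth} applied in reverse, together with base change, as $F'_{E^0}\1$ twisted, and $p^*\iota_{E^{\leq0}!}\1$ by proper base change along the square $\iota\times_{E^{\leq0}}E^0$. Then I transport the classical datum $\alpha_{E^0}$ of Example \ref{ex_involutivity_datum_examples}.1.

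The descent step is cleanest in the $(\alpha')$ presentation. I need a map $\1_X\to\iota^!_{E^{\leq0}}F'\1\Td$, and $\iota^!$ of $E^{\leq0}$ factors through $\iota_{E^0}^!$ and the smooth map, so the classical $\alpha'_{E^0}$ gives it directly after pulling back along $X\to X\times_{E^{\leq0}}E^0$. Whether it is an isomorphism can be checked pointwise by Lemma \ref{lem_point_conservative}. Over a point, presentations split, $E^{\leq 0}$ becomes a product of pieces, and Example \ref{ex_involutivity_datum_examples}.2 shows that the corresponding maps are isomorphisms. The same construction with the roles of tor-amplitude swapped (closed immersion $E\to E^{\leq0}$, and dually $E'$ is of the "$\leq0$" type) yields $\alpha'_{E'}$. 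The general $E$ is then handled by composing the two steps, $E^0\leadsto E^{\leq0}\leadsto E$, using Lemma \ref{lem_base_change_smooth} for the closed immersion $i$.

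The sign condition of Definition \ref{def_inv_datum} is where I expect the main obstacle. The strategy is to use Remark \ref{rmk_inv_datum_basics}.3. Locally on connected components, the two composites in the diagram differ by a scalar in $L^\times$, and a scalar can be detected after base change to a point. Hence I first establish (2): each ingredient (Lemma \ref{lem_base_change_smooth}, proper base change, the classical datum of Example \ref{ex_involutivity_datum_examples}.1) commutes with base change, so the constructed datum is compatible with $f^*$ in the sense of Lemma \ref{lem_inv_base_change}. Next I prove (3) over a point, by checking that the construction reproduces the obvious datum of Example \ref{ex_involutivity_datum_examples}.2. Over a point everything is a box product of shifted $\Ga^{d^i}$'s, and the computation reduces to tracking the sign $(-1)^{d^i}$ through Lemma \ref{lem_blob}. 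The sign condition then holds at points, so it holds globally.

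Independence of the chosen presentation, which is needed for "canonical", follows by the same scalar argument. Two presentations give data that differ by a locally constant scalar, and that scalar is $1$ because both agree at points by (3). For classical $E$, (3) follows similarly by comparing with Example \ref{ex_involutivity_datum_examples}.1 at points.
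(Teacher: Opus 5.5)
Your global plan matches the paper's: transport the classical datum on $E^0$ along the presentation using Lemma \ref{lem_base_change_smooth} and full faithfulness, prove compatibility with base change, and pin down the remaining locally constant scalar by comparing at points with Example \ref{ex_involutivity_datum_examples}.2. The gap is in the central construction step, which as written does not produce an isomorphism.

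\textbf{The identification after $p^*$ is false.} You identify $p^*F'_{E^{\le0}}\1$ with a twist of $F'_{E^0}\1$. By Functoriality (1) for $p'$ (Remark \ref{rmk_base_func_still_hold}), in fact $p^*F'_{E^{\le0}}\1\simeq F'_{E^0}(p'_!\1)$, where $p'_!\1$ is the unit pushed forward from the closed substack $(E^{\le0})'\subset(E^0)'$. Likewise, proper base change gives $p^*\iota_{E^{\le0}!}\1_X$ as the $!$-pushforward of the unit along $X\X_{E^{\le0}}E^0\to E^0$ (the total space of $\fib(\CE^0\to\CE^{\le0})$), not $\iota_{E^0!}\1_X$. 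So after $p^*$ neither side is what $\alpha_{E^0}$ relates. Your displayed instance of Lemma \ref{lem_base_change_smooth} for $p'$ is also mis-typed; it should read $p_!F'_{E^0}\langle d^0\rangle\simeq F'_{E^{\le0}}p'^*\langle d_{\le0}\rangle$.

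\textbf{The $(\alpha')$ fallback is not shown to be invertible.} It does not give the map directly, since $\iota^!_{E^{\le0}}F'_{E^{\le0}}\1\simeq\iota^!_{E^0}F'_{E^0}(p'_!\1)\langle\dim p\rangle$. Relating this to $\alpha'_{E^0}$ needs an adjunction map, and nothing shows the result is an isomorphism. Checking at points would need $x^*\iota^!_{E^{\le0}}\simeq\iota^!x^*$, which is not formal for the non-smooth map $\iota_{E^{\le0}}$. This is why invertibility is naturally checked on the $\alpha$ side and transferred via \cite[Lemma A.3.3]{FYZ} (cf.\ Remark \ref{rmk_inv_datum_basics}.2). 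Even granting that, Example \ref{ex_involutivity_datum_examples}.2 only gives $\CK_x\simeq\1_x$, so your map is some scalar in $L$ that could a priori be $0$.

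\textbf{How to repair it with your own tools.} Using the correct form of the lemma, apply $p_!$ rather than $p^*$ to $\alpha_{E^0}$. Since $p_!\iota_{E^0!}=\iota_{E^{\le0}!}$, you get
\[
\iota_{E^{\le0}!}\1_X\isoto p_!F'_{E^0}\1\langle d^0\rangle\simeq F'_{E^{\le0}}\1\langle d_{\le0}\rangle,
\]
which is invertible by construction. Passing to $E$ then uses full faithfulness of $i_!$ and the lemma for the smooth dual map $i'$.

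The paper does this in one step, at the level of functors. The embedding $p^*i_!\colon D(E)\hookrightarrow D(E^0)$ is fully faithful, and Lemma \ref{lem_base_change_smooth} (with Functoriality (1)) gives $p^*i_!F'F[-1]^*\Td\simeq F'_0F_0\,p^*i_![-1]^*\langle d^0\rangle$. Hence $\eta_{E^0}$ descends to an isomorphism $\eta_E$ with no pointwise check at all.

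\textbf{The sign condition.} Your route (reduce to a scalar, base change to points, use (3)) differs from the paper, which asserts the sign condition directly for the restricted pair. Your route is legitimate and parallels Remark \ref{rmk_sign_classical}. Note, however, that at a point the pulled-back presentation need not be the split one $\oplus_i\CH^i\CE[-i]$. It differs by contractible pieces $V\xrightarrow{\id}V$, which change $\CE^0$, and showing that these do not change the datum is exactly \cite[Lemma A.3.2]{FYZ}. Your claim that over a point ``everything is a box product'' skips this step.
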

See the beginning of §\ref{subsec_dVect} for the notion of a presentation and notations in the proof below.
\begin{proof}
    First choose a presentation of $\CE$. By Lemma \ref{lem_bootstrap}, we have a fully faithful imbedding $p^*i_!: D(E)\hookrightarrow D(E^0)$. Using Lemma \ref{lem_base_change_smooth}, we get $p^*i_!F'F[-1]^*\Td\simeq F'_0F_0p^*i_![-1]^*\langle d^0\rangle\xrightarrow[\sim]{\eta_{E^0}\circ p^*i_!} p^*i_!$, where $F$ (resp. $F_0$, resp. $F'$. resp. $F_0'$) denotes the Fourier transform on $E$ (resp. $E^0$, resp. $E'$, resp. $(E^0)'$), and $d$ (resp. $d^0$) is the rank of $E$ (resp. $E^0$). As $p^*i_!$ is fully faithful, we get an isomorphism $F'F[-1]^*\Td\isoto \id_E$. We denote this by $\eta_{E^0}|_E$ and call it the restriction of $\eta_{E^0}$ to $E$. It is easy to verify that $(\eta_{E^0}|_E,\eta_{(E^0)'}|_{E'})$ forms an involutivity datum.\\\\
    Given a base change diagram as in Lemma \ref{lem_inv_base_change}, the chosen presentation pulls back to a presentation of $\tE$. We use a “$\,\,\widetilde{}\,\,$” to denote the corresponding objects over $\widetilde{X}$. By Remark \ref{rmk_base_func_still_hold}, the pullback $(f^*\eta_{E^0},f^*\eta_{(E^0)'})$ (living on $\tE^0)$ restricts to an involutivity datum $((f^*\eta_{E^0})|_{\tE},(f^*\eta_{(E^0)'})|_{\tE'})$ on $\tE$. Note that $(f^*\eta_{E^0})|_{\tE}$ can be described in the $\alpha_{\tE}$ presentation as follows: by Remark \ref{rmk_base_func_still_hold} and proper base change, we have $f^*F'_0F_0p^*i_!\iota_{E!}\1_{X}\langle d^0\rangle\simeq \tF_0'\tF_0f^*p^*i_!\iota_{E!}\1_{X}\langle d^0\rangle\simeq\tF_0'\tF_0\widetilde{p}^*\widetilde{i}_!f^*\iota_{E!}\1_{X}\langle d^0\rangle\simeq\tF_0'\tF_0\widetilde{p}^*\widetilde{i}_!\iota_{\tE!}\1_{\widetilde{X}}\langle d^0\rangle$. By Lemma \ref{lem_base_change_smooth}, the last term is isomorphic to $\widetilde{p}^*\widetilde{i}_!\tF'\tF\iota_{\tE!}\1_{\widetilde{X}}\Td$, on the other hand it is isomorphic to $f^*p^*i_!\iota_{E!}\1_{X}\simeq\widetilde{p}^*\widetilde{i}_!\iota_{\tE!}\1_{\widetilde{X}}$ via $f^*\eta_{E^0}$, so we get $\widetilde{p}^*\widetilde{i}_!\tF'\tF\iota_{\tE!}\1_{\widetilde{X}}\Td\simeq\widetilde{p}^*\widetilde{i}_!\iota_{\tE!}\1_{\widetilde{X}}$. By the full faithfulness of $\widetilde{p}^*\widetilde{i}_!$, this gives an $\alpha_{\tE}: \tF'\1_{\widetilde{E}'}\Td\simeq\iota_{\tE!}\1_{\widetilde{X}}$, which corresponds to $(f^*\eta_{E^0})|_{\tE}$.\\\\
    Note that this description of $\alpha_{\tE}$ is the same as how one computes the $\alpha_{\tE}$ corresponding to $f^*(\eta_{E^0}|_E)$. We thus obtain: $((f^*\eta_{E^0})|_{\tE},(f^*\eta_{(E^0)'})|_{\tE'})\simeq(f^*(\eta_{E^0}|_E),f^*(\eta_{(E^0)'}|_{E'}))$. As the involutivity datum on $E^0$ commutes with pullbacks (Example \ref{ex_involutivity_datum_examples}.1), $(f^*(\eta_{E^0}|_E),f^*(\eta_{(E^0)'}|_{E'}))$ coincides with the involutivity datum constructed by applying the first paragraph to the pullback presentation on $\tE$.\\\\
    We can now prove that the involutivity datum on $E$ constructed as in the first paragraph is independent of the presentation of $E$. We may assume $X$ is connected. By Remark \ref{rmk_inv_datum_basics}.3, the involutivity datum is unique up to a scalar. By the above, this scalar can be computed after a base change. However, as shown in \cite[Lemma A.3.2]{FYZ} (proved in the same way here), if $X$ is a geometric point, the involutivity datum constructed as above coincides with the ``obvious'' one in Example \ref{ex_involutivity_datum_examples}.2. So the scalar is independent of the presentation. This concludes the proof.
\end{proof}

\begin{theorem}[Involutivity]\label{thm_involutivity}
    Every derived vector bundle $E$ over a derived Artin stack $X$ has a canonical involutivity datum.
\end{theorem}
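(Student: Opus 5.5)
Since presentations of $\CE$ exist smooth locally on $X$ (\cite[\nopp §2.2]{FK}), I will glue the canonical involutivity data of Lemma \ref{lem_small_open_inv_datum} along a smooth cover. Choose a smooth surjection $u: U\rightarrow X$ from a derived scheme (or a disjoint union of affines) such that $u^*\CE$ admits a presentation. Let $U_\bullet$ be its Čech nerve, with $u_n: U_n\rightarrow X$. Each $u_n^*\CE$ admits a presentation, pulled back from $U$ along any of the face maps. So Lemma \ref{lem_small_open_inv_datum}.1 gives an involutivity datum on each $E_{U_n}:=E\X_XU_n$. By Lemma \ref{lem_small_open_inv_datum}.2, together with the independence of the presentation established at the end of that proof, these data are compatible with all the face and degeneracy maps. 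Concretely, for a simplicial map $g: U_m\rightarrow U_n$, the datum $g^*(\eta_{E_{U_n}})$ agrees with $\eta_{E_{U_m}}$.

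The gluing step is cleanest in the $(\alpha'_E,\alpha'_{E'})$ presentation of Remark \ref{rmk_inv_datum_basics}.2(ii). There, an involutivity datum amounts to isomorphisms $\alpha'_E: \1_X\rightarrow\iota_E^!F'\1_{E'}\Td$ (and symmetrically for $E'$) subject to the sign condition. Set $M:=\iota_E^!F'\1_{E'}\Td\in D(X)$. By Lemma \ref{lem_F_base_change}.2 (which still holds by Remark \ref{rmk_base_func_still_hold}) and smooth base change for $\iota_E^!$, there is a canonical isomorphism $u_n^*M\simeq\iota_{E_{U_n}}^!F'_{U_n}\1\Td$, and these isomorphisms are compatible along $U_\bullet$. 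The local data therefore give a compatible family of maps $\1_{U_n}\rightarrow u_n^*M$, that is, a point of $\lim_\Delta\Hom(\1_{U_n},u_n^*M)$.

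Since $D(-)$ is a sheaf for the smooth topology, $\RHom_X(\1,M)\simeq\lim_\Delta\RHom_{U_n}(\1,u_n^*M)$. To descend a map I need the full homotopy-coherent compatibility, not just agreement on $\pi_0$. I avoid this by using Lemma \ref{lem_hom_of_unit_conn_comp}: each $u_n^*M$ is locally isomorphic to $\1$, and $\RHom(\1,\1)$ is coconnective, so each $\RHom_{U_n}(\1,u_n^*M)$ is coconnective. Then $\pi_0$ of the totalization is the equaliser $\ker\big(\Hom(\1_{U_0},u_0^*M)\rightrightarrows\Hom(\1_{U_1},u_1^*M)\big)$, so only the degree-$1$ agreement matters, and that is exactly what the compatibility in the previous paragraph supplies. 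This produces $\alpha'_E$ on $X$, and likewise $\alpha'_{E'}$.

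It remains to check three things. First, $\alpha'_E$ is an isomorphism, which can be tested after the smooth surjective pullback $u^*$ by conservativity. Second, the sign condition holds; this condition is an equality of two maps in $D$, so it too can be checked after $u^*$, where it holds by construction. Third, the result is canonical. If two covers are given, pass to their common refinement $U\X_XV$ and apply Lemma \ref{lem_small_open_inv_datum}.2 again; the data agree locally, and maps out of $\1$ into $M$ are determined smooth-locally by the same $\pi_0$-equaliser argument. The main obstacle I expect is this coherence step, namely making sure the local data really define a point of the $\pi_0$-equaliser without needing higher homotopies. The coconnectivity of Lemma \ref{lem_hom_of_unit_conn_comp} is precisely what lets me avoid the $t$-structure arguments of \cite{FYZ}. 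If non-quasi-compact $X$ causes trouble, I will first work on quasi-compact opens and then glue those by the same argument.
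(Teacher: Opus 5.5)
Your proposal is correct and is essentially the paper's argument. The paper also takes $\CK=\iota_E^!F'\1_{E'}\Td$ (your $M$) and observes that $V\mapsto\RHom_{D(V)}(\1_V,\CK_V)$ is a sheaf on the smooth site. By Lemma \ref{lem_hom_of_unit_conn_comp} this sheaf is $\Vect^{\geq0}$-valued, so its $H^0$ is an ordinary sheaf, and the paper glues the local maps of Lemma \ref{lem_small_open_inv_datum} using their compatibility with restriction; your \v{C}ech-nerve equaliser computation is an explicit version of exactly this.

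The one step to phrase more carefully is the descent of the sign condition. It does not follow from $u^*$ being faithful, since it is not faithful in general. Rather, as in Remark \ref{rmk_inv_datum_basics}.3, the condition amounts to identifying a locally constant scalar in $\Hom(\1_X,\1_X)$, and this group injects into $\Hom(\1_U,\1_U)$ by the same coconnectivity.
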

We give a slightly different argument from that in \cite[Lemma A.3.4]{FYZ}, avoiding the use of $t$-structures on $D(X)$.
\begin{proof}
    Recall that $E$ admits a presentation smooth locally on $X$. For this proof, we call a derived Artin stack $U$ equipped with a smooth map to $X$ \underline{small} if $E$ admits a presentation on $U$. We will construct a map $\alpha'_{E}: \1_X\rightarrow\iota_E^!F'\1_{E'}\Td$ and show that, together with $\alpha'_{E'}$ obtained by the dual construction, they form a $(\alpha'_E,\alpha'_{E'})$ presentation of an involutivity datum.\\\\
    Denote $\iota_E^!F'\1_{E'}\Td$ by $\CK$. Lemma \ref{lem_small_open_inv_datum} gives a canonical isomorphism $\1_{U}\isoto\CK_U$ for every small $U$. Consider the $\Vect$-valued presheaf $\CF: V\mapsto \RHom_{D(V)}(\1_V,\CK_V)$ on the smooth site of $X$. This is in fact a sheaf, because $D(-)$ is a sheaf in the smooth topology. By Lemma \ref{lem_hom_of_unit_conn_comp}, $\CF(U)$ is coconnective for every small $U$, so $\CF$ is in fact a sheaf valued in $\Vect^{\geq0}$. It follows that $V\mapsto H^0\CF(V)=\Hom_{D(V)}(\1_V,\CK_V)$ is a sheaf valued in $\Vect^{\heartsuit}$.\footnote{This can be seen, for example, as follows: the local-to-global spectral sequence gives, for all $V$, $E^{p,q}_2=H^p(V,\CH^q\CF)\implies H^{p+q}(V,\CF)$. As $\CF$ is coconnective, we get $H^0(V,\CH^0\CF)\simeq H^0(V,\CF)$, \textit{i.e.}, sheafifying $V\mapsto H^0(V,\CF)$ does not change sections.} As the map $\1_{U}\rightarrow\CK_U$ on every connected small $U$ is unique up to a scalar, and the scalar commutes with restrictions (Lemma \ref{lem_small_open_inv_datum}), the maps on small $U$'s glue uniquely to a global map $\alpha'_E: \1_X\rightarrow\CK$. By looking at restrictions to small opens, it is clear that $\alpha'_E$, together with $\alpha'_{E'}$ obtained by the dual construction, form an involutivity datum.
\end{proof}

\subsection{Properties}\label{subsec_F_properties_d}
The discussion in §\ref{sec_fourier_on_vect} now applies \textit{mutatis mutandis} to the current set-up. Throughout this section, let $E$ be a derived vector bundle over a derived Artin stack $X$. See Conventions for the notations.
\begin{lemma}[Right adjoint]\label{lem_miracle_d}
    There is a natural isomorphism $F\simeq p'_*(p^{!}(-)\OX m^*\CL)\Tmd$. If $p$ is smooth (\textit{i.e.}, if $E$ has tor-amplitude in $[0,+\infty)$), then there is a further natural isomorphism with $p'_*(p^*(-)\OX m^*\CL)$.
\end{lemma}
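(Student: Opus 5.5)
The plan is to repeat the computation preceding Lemma \ref{lem_miracle}, now using Theorem \ref{thm_involutivity} in place of Proposition \ref{prop_involutivity}.

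\emph{Step 1: identify $F$ as a right adjoint.} The canonical involutivity datum $(\eta_E,\eta_{E'})$ of Theorem \ref{thm_involutivity} satisfies the sign condition of Definition \ref{def_inv_datum}. Exactly as after Remark \ref{rmk_sign_classical}, this makes $F$ the right adjoint of $F'[-1]^*\Td$, with unit $\eta_{E'}^{-1}$ and counit $(-1)^d\eta_E$. Reversing the roles of $E$ and $E'$ (an involutivity datum on $E$ is the same as one on $E'$, Remark \ref{rmk_inv_datum_basics}.1), $F$ is also the right adjoint of $[-1]^*\Td F'$.

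\emph{Step 2: compute the right adjoint of $F'$ formally.} Write $F'=p_!(p'^*(-)\OX m^*\CL)$. Its right adjoint is $p'_*\RHOM(m^*\CL,p^!(-))$. Since $m^*\CL$ is dualisable, this equals $p'_*((m^*\CL)^\vee\OX p^!(-))$. Lemma \ref{lem_L_dual} gives $(m^*\CL)^\vee\simeq m^*\CL_{p-1}$, and Lemma \ref{lem__1_L} together with $m\circ(\id\X[-1])=[-1]\circ m$ rewrites this as $(\id\X[-1])^*m^*\CL$. Base change along the automorphism $[-1]$ of $E'$ then gives $[-1]^*p'_*(p^!(-)\OX m^*\CL)$. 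All of these steps are formal six-functor manipulations valid for derived Artin stacks, so nothing beyond the classical case is needed here.

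\emph{Step 3: compare the two right adjoints.} By Step 1, the right adjoint of $[-1]^*\Td F'$ is $F$; by Step 2, it is $p'_*(p^!(-)\OX m^*\CL)\Tmd$, after absorbing the two copies of $[-1]^*$. Uniqueness of adjoints gives the first isomorphism. The only care needed is the bookkeeping of $[-1]^*$ and the twist $\Td$: $[-1]^*$ is an involution commuting with everything involved, and the twist is inverted when passing to the adjoint.

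\emph{Step 4: the smooth case.} If $E$ has tor-amplitude in $[0,+\infty)$, then $E'$ has tor-amplitude in $(-\infty,0]$, so $\pi_{E'}$ is smooth. Its base change $p:E\X_XE'\to E$ is then smooth of relative dimension $d$, the virtual rank of $E'$ (equal to that of $E$). Poincaré duality gives $p^!\simeq p^*\Td$, and substituting into the first isomorphism yields $F\simeq p'_*(p^*(-)\OX m^*\CL)$.

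The main obstacle is only bookkeeping. One must check that the adjunction of Step 1 really holds on derived bundles, which is the content of the sign condition in the canonical datum. One must also keep track of the virtual rank, which is locally constant, so the argument can be run componentwise.
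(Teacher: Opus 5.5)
Your proposal is correct and is essentially the paper's proof. The paper reruns the formal computation preceding Lemma \ref{lem_miracle} (dualisability of $m^*\CL$, Lemmas \ref{lem_L_dual} and \ref{lem__1_L}, base change along $[-1]$) against the adjunction supplied by the canonical involutivity datum of Theorem \ref{thm_involutivity}, then uses $p^!\simeq p^*\Td$ for the smooth projection; the only cosmetic difference is that you compute the right adjoint of $F'$ and match it with $F$ directly, whereas the paper computes the right adjoint of $F$ to get the formula for $F'$ and then swaps $E$ and $E'$.
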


\begin{lemma}[Verdier duality]
    There is a natural isomorphism $\DD_{E'/X}F\simeq (F\DD_{E/X})[-1]^*\Td$.
\end{lemma}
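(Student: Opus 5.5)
The plan is to follow the classical computation from §\ref{sec_fourier_on_vect} verbatim, since each step is a formal six-functor manipulation that holds on derived Artin stacks. The one genuinely new input is the right-adjoint formula Lemma \ref{lem_miracle_d}. That formula itself comes from the involutivity datum of Theorem \ref{thm_involutivity}, as in the classical case. Concretely, $F'[-1]^*\Td$ is both a left and a right adjoint of $F$. Computing the right adjoint formally and using dualisability of $m^*\CL$ (Lemma \ref{lem_L_dual}) together with Lemma \ref{lem__1_L} gives $F\simeq p'_*(p^!(-)\OX m^*\CL)\Tmd$.

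For $A\in D(E)$, I would write
\[\DD_{E'/X}FA=\RHOM(p'_!(p^*A\OX m^*\CL),\pi_{E'}^!\1_X)\]
and then proceed in four steps.

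\begin{enumerate}
\item Use the adjunction $p'_!\dashv p'^!$ in its internal form to get $p'_*\RHOM(p^*A\OX m^*\CL,\,p'^!\pi_{E'}^!\1_X)$.
\item Use tensor-hom adjunction and dualisability of $m^*\CL$ to rewrite this as $p'_*((m^*\CL)^\vee\OX\RHOM(p^*A,\,p'^!\pi_{E'}^!\1_X))$.
\item Identify $p'^!\pi_{E'}^!\simeq p^!\pi_E^!$, since both equal $(\pi_{E}\circ p)^!=(\pi_{E\times_X E'})^!$. Then apply the standard isomorphism $\RHOM(p^*A,p^!K)\simeq p^!\RHOM(A,K)$ to get $p^!\DD_{E/X}A$.
\item Replace $(m^*\CL)^\vee\simeq m^*\CL_{p-1}\simeq m^*[-1]^*\CL$, and move the $[-1]$ onto $E'$ (equivalently onto $E$), using that $[-1]$ commutes with $p,p'$ and satisfies $m\circ([-1]\times\id)=[-1]\circ m$.
\end{enumerate}

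This yields $[-1]^*p'_*(p^!\DD_{E/X}A\OX m^*\CL)$. By the first isomorphism of Lemma \ref{lem_miracle_d}, this is $[-1]^*(F\DD_{E/X}A)\Td$. Since $[-1]^*$ commutes with $F$ (apply Remark \ref{rmk_base_func_still_hold}-style base change along the automorphism $[-1]$, or note directly that $[-1]$ on $E$ and on $E'$ are intertwined by $F$), this can be written as $(F\DD_{E/X}A)[-1]^*\Td$.

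The main point requiring care is the step $\RHOM(p^*A,p^!K)\simeq p^!\RHOM(A,K)$ and the use of $p'^!$ for maps that are not representable or not smooth. On derived Artin stacks these exist in the formalism of \cite{khan_virtual} for maps locally of finite type, so I expect no genuine obstacle beyond citing them. The substantive input, the isomorphism $F\simeq p'_*(p^!(-)\OX m^*\CL)\Tmd$, is already available from Lemma \ref{lem_miracle_d}.
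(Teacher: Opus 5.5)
Your proposal is correct and is essentially the paper's own argument: the same chain of isomorphisms (the internal form of $p'_!\dashv p'^!$; tensor--hom adjunction together with dualisability of $m^*\CL$; $p'^!\pi_{E'}^!\simeq p^!\pi_E^!$; and $(m^*\CL)^\vee\simeq m^*[-1]^*\CL$ moved onto $E'$), concluded by the first isomorphism of the right-adjoint lemma, which in the derived setting rests on the involutivity datum. You only make explicit two steps the paper leaves implicit, namely $\RHOM(p^*A,p^!K)\simeq p^!\RHOM(A,K)$ and the commutation of $[-1]^*$ with $F$ in the final rewriting.
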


\begin{lemma}[Base change]\label{lem_F_base_change_d}
    Let $f: \widetilde{X}\rightarrow X$ be a map of derived Artin stacks, let $\tE= E\X_X\widetilde{X}$ be the pullback. There are natural isomorphisms:
$$ (1)\,\, Ff_!\simeq f_!\tF,\,\,\,\, (2)\,\, f^*F\simeq \tF f^*,\,\,\,\, (3)\,\, f^!F\simeq \tF f^!,\,\,\,\, (4)\,\, Ff_*\simeq f_*\tF.$$
\end{lemma}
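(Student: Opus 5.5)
Parts (1) and (2) are formal and hold verbatim in the derived setting, as recorded in Remark \ref{rmk_base_func_still_hold}. For (1), consider the cube of Cartesian squares of Lemma \ref{lem_F_base_change}, with $\tE\X_{\widetilde X}\tE'\simeq (E\X_XE')\X_X\widetilde X$; here we use that $\mathrm{Tot}$ preserves Cartesian squares and that duals of perfect complexes commute with pullback, so $\tE'\simeq E'\X_X\widetilde X$. For $A\in D(\tE)$, proper base change for the square $\tE\X\tE'\to E\X E'$ over $\tE\to E$ gives $p^*f_!A\simeq f_!\tp^*A$. Since $\widetilde m=m\circ f$, the projection formula gives $f_!\tp^*A\OX m^*\CL\simeq f_!(\tp^*A\OX\widetilde m^*\CL)$. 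Finally, $p'_!f_!\simeq f_!\tp'_!$ by functoriality of $(-)_!$, which yields $Ff_!\simeq f_!\tF$.

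For (2), the argument runs the other way round. Pullback commutes with $\OX$, and $f^*m^*\CL\simeq\widetilde m^*\CL$. Proper base change along the Cartesian square formed by $p'$ and $\tp'$ gives $f^*p'_!\simeq\tp'_!f^*$. Combined with $f^*p^*\simeq\tp^*f^*$, this gives $f^*F\simeq\tF f^*$. The only care needed is that proper base change and the projection formula hold for maps locally of finite type of derived Artin stacks, which is supplied by \cite[Theorem A.7]{khan_virtual}, and that $p'$ is locally of finite type.

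Parts (3) and (4) I would obtain by passing to adjoints, using the involutivity now available from Theorem \ref{thm_involutivity}. By Remark \ref{rmk_sign_classical} and its derived analogue via the involutivity datum, $F$ is an equivalence with right adjoint $F'[-1]^*\Td$, and likewise for $\tF$. Taking right adjoints of (1) applied to the dual bundle, i.e.\ of $F'f_!\simeq f_!\tF'$, gives $f^!F'^{R}\simeq \tF'^{R}f^!$. Here $F'^R\simeq F[-1]^*\Td$, and the analogous identity holds on $\tE$. Since $[-1]^*$ and the twist commute with $f^!$, cancelling them gives $f^!F\simeq\tF f^!$. Similarly, taking right adjoints of (2) applied to the dual bundle, $f^*F'\simeq\tF'f^*$, gives $F'^Rf_*\simeq f_*\tF'^R$, and hence $Ff_*\simeq f_*\tF$ after cancelling $[-1]^*\Td$. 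This is the relabelling step of Lemma \ref{lem_F_base_change}.

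The main point to watch is that the rank $d$ is only locally constant, but it is preserved by pullback, so the twists $\Td$ on both sides match under $f$. The heavy lifting, the existence of the adjoint via involutivity, is already done in Theorem \ref{thm_involutivity}, so I expect no genuine obstacle beyond this bookkeeping.
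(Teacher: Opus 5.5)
Your proposal is correct and follows the paper's route: (1) and (2) come formally from proper base change and the projection formula, and (3) and (4) come from taking right adjoints of (1) and (2) and relabelling $E\leftrightarrow E'$. The relabelling uses that, by involutivity (Theorem \ref{thm_involutivity}), the right adjoint of $F'$ is $F[-1]^*\Td$; your explicit cancellation of $[-1]^*\Td$ is exactly the step the paper leaves implicit.
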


\begin{lemma}[Functoriality]\label{lem_F_functoriality_d}
    Let $\alpha: \tE\rightarrow E$ be a map of derived vector bundles over $X$, and $\alpha'$ be the dual map. There are natural isomorphisms:
    $$ (1)\,\, F\alpha_!\simeq\alpha'^*\tF,\,\,\,\, (2)\,\, \alpha'_!F\Td\simeq \tF\alpha^*\Tdt, \,\,\,\, (3)\,\, F\alpha_*\Td\simeq\alpha'^!\tF\Tdt,\,\,\,\, (4)\,\, \alpha'_*F\simeq\tF\alpha^!.$$
\end{lemma}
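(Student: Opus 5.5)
We follow the proof of Lemma \ref{lem_F_functoriality}, now using the canonical involutivity datum of Theorem \ref{thm_involutivity} in place of Proposition \ref{prop_involutivity}.

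\emph{Step 1: (1).} By Remark \ref{rmk_base_func_still_hold}, $F\alpha_!\simeq\alpha'^*\tF$ continues to hold. It is a formal consequence of proper base change and the projection formula, applied in the diagram of Lemma \ref{lem_F_functoriality}. The only input needed is that the pairing $m$ on $E\X_XE'$ pulls back, along $\tE\X_XE'$, to the pairing $\widetilde m$ on $\tE\X_X\tE'$. This holds because $\alpha'$ is dual to $\alpha$, so the two composites $\tE\otimes E'\to\CO_X$ agree in $\mathrm{Perf}(X)$.

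\emph{Step 2: (3).} We take right adjoints of the isomorphism in (1). By the involutivity datum, $F$ is an equivalence with inverse $F'[-1]^*\Td$, so its right adjoint is $F'[-1]^*\Td$, and likewise for $\tF$. Taking right adjoints of (1) therefore gives $\alpha^!F'[-1]^*\Td\simeq \tF'[-1]^*\Tdt\alpha'_*$. Relabelling (exchanging the roles of $E$ and $E'$, and using that $[-1]^*$ commutes with $\alpha^!$ and $\alpha'_*$) yields (3).

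\emph{Step 3: (2), (4).} We compose (1) with $F'$ on the left and $\tF'$ on the right, and apply the isomorphisms $\eta_E$ and $\eta_{\tE'}$ from Theorem \ref{thm_involutivity}. This gives
\[
[-1]^*\Tmd\alpha_!\tF'\simeq F'\alpha'^*[-1]^*\Tmdt .
\]
Applying this statement to $\alpha'$ in place of $\alpha$ (the dual of $\alpha'$ is $\alpha$, and $E''\simeq E$), and cancelling $[-1]^*$, gives (2). Taking right adjoints of (2), as in Step 2, gives (4).

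\emph{Main obstacle.} The argument is formal except for the input from Theorem \ref{thm_involutivity}: that $F$ is an equivalence on derived vector bundles, with the adjunction data determined by the canonical $(\eta_E,\eta_{E'})$. That theorem is what replaces the classical closed-immersion argument. The one point to check carefully is that, over derived stacks, the base change used in (1) holds for arbitrary $\alpha$, not only for the smooth or closed immersion maps treated in Lemma \ref{lem_base_change_smooth}. This is indeed the case, since (1) uses only proper base change and the projection formula, and $p$, $p'$, $m$ are all locally of finite type. We do not assert here any coherence of these isomorphisms (for example with units and counits), so the sign conventions do not affect the statement.
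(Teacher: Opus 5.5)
Your proposal is correct and follows the paper's argument: the proof of Lemma \ref{lem_F_functoriality} carries over verbatim, with Theorem \ref{thm_involutivity} making $F$ an equivalence with inverse $F'[-1]^*\Td$. The only difference is cosmetic: you obtain (4) by taking right adjoints of (2) and then relabelling via $\alpha'$, whereas the paper takes right adjoints of the intermediate isomorphism (2$'$) directly; both yield the same statement.
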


\begin{lemma}[Compatibility with units and counits]\label{lem_BC_d}
    The set-up is as in Lemma \ref{lem_F_functoriality_d}. Then we have four commutative diagrams
\[\begin{tikzcd}
	& {\tF\alpha^!\alpha_!} && {F\alpha_!\alpha^!} && {F\alpha_*\alpha^*} && {\tF\alpha^*\alpha_*} \\
	\tF && F && F && \tF \\
	& {\alpha'_*\alpha'^*\tF} && {\alpha'^*\alpha'_*F} && {\alpha'^!\alpha'_!F} && {\alpha'_!\alpha'^!\tF}
	\arrow["\simd", tail reversed, from=1-2, to=3-2]
	\arrow["{\mathrm{counit}}"', from=1-4, to=2-3]
	\arrow["\simd", tail reversed, from=1-6, to=3-6]
	\arrow["{\mathrm{counit}}"', from=1-8, to=2-7]
	\arrow["\simd", tail reversed, from=1-8, to=3-8]
	\arrow["{\mathrm{unit}}", from=2-1, to=1-2]
	\arrow["{\mathrm{unit}}"', from=2-1, to=3-2]
	\arrow["{\mathrm{unit}}", from=2-5, to=1-6]
	\arrow["{\mathrm{unit}}"', from=2-5, to=3-6]
	\arrow["\simd"', tail reversed, from=3-4, to=1-4]
	\arrow["{\mathrm{counit}}", from=3-4, to=2-3]
	\arrow["{\mathrm{counit}}", from=3-8, to=2-7]
\end{tikzcd}\]
where the vertical isomorphisms are from Functoriality.
\end{lemma}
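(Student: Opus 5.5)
The plan is to argue as in the classical case (Lemma \ref{lem_BC}): realise each of the isomorphisms of Lemma \ref{lem_F_functoriality_d} as a Beck–Chevalley transformation, and then use that Beck–Chevalley transformations are compatible with units and counits. The only change from §\ref{sec_fourier_on_vect} is in where the ingredients come from. Isomorphism (1), $F\alpha_!\simeq\alpha'^*\tF$, is built from proper base change and the projection formula (Remark \ref{rmk_base_func_still_hold}). Isomorphisms (2)–(4) are then obtained from (1) using Involutivity (Theorem \ref{thm_involutivity}) and passage to adjoints.

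Concretely, take the canonical involutivity datum $(\eta_E,\eta_{E'})$ of Theorem \ref{thm_involutivity}, and likewise on $\tE$. By the sign condition in Definition \ref{def_inv_datum}, $[-1]^*\Td F'$ is both a left and a right adjoint of $F$: the unit and counit are $\eta_{E'}^{-1}$ and $(-1)^d\eta_E$ in one case, and $(-1)^d\eta_E^{-1}$ and $\eta_{E'}$ in the other. The same holds for $\tF$.

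With these adjunctions fixed, consider the square
\[D(\tE)\xrightarrow{\tF}D(\tE'),\qquad \alpha_!\downarrow,\quad \downarrow\alpha'^*,\qquad D(E)\xrightarrow{F}D(E'),\]
which commutes via (1). I would first check that (2$'$), $[-1]^*\Tmd\alpha_!\tF'\simeq F'\alpha'^*[-1]^*\Tmdt$, is by construction the left Beck–Chevalley transformation of (1) for this square. I would then check that (4) is the right Beck–Chevalley transformation of (1) with respect to the transposed square, using $\alpha_!\dashv\alpha^!$ and $\alpha'^*\dashv\alpha'_*$, as in \cite[Remark 2.4.1.3]{GL}. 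Both checks are unwinding definitions: (4) was defined by taking right adjoints of (2$'$), and the right adjoint of a left Beck–Chevalley mate agrees with the mate of the transposed square. Isomorphisms (2) and (3) are handled the same way, by applying these constructions to $\alpha'$ and to the adjoints.

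The four diagrams then follow from the general fact that mates are compatible with units and counits. For instance, in the leftmost diagram the composite $\tF\to\tF\alpha^!\alpha_!\simeq\alpha'_*\alpha'^*\tF$ equals the unit of $\alpha'^*\dashv\alpha'_*$, by the triangle identity built into the definition of the mate. The other three diagrams are handled symmetrically.

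The main obstacle is the first step: confirming that the isomorphisms in Lemma \ref{lem_F_functoriality_d} really are these mates, with consistent adjunction data. In the derived setting $\eta_E$ is not given by the explicit kernel computation of Proposition \ref{prop_involutivity}; it is obtained by gluing (Theorem \ref{thm_involutivity}). The argument should therefore use only the abstract adjunction structure supplied by the involutivity datum, together with the signs from Definition \ref{def_inv_datum}, and never any explicit formula for $\eta_E$. Once everything is phrased in terms of adjunctions and mates, the verification is formal, exactly as in the classical case.
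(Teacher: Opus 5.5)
Your proposal is correct and follows essentially the same route as the paper, which proves this by saying the classical argument of Lemma \ref{lem_BC} applies mutatis mutandis. That argument views (2$'$) as the left Beck--Chevalley transformation of (1), using $[-1]^*\Td F'$ as left adjoint of $F$ via the involutivity datum, and (4) as the right Beck--Chevalley transformation of the transposed square. It then concludes from the compatibility of Beck--Chevalley transformations with units and counits.
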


\begin{lemma}[Convolution]
    For $A,B\in D(E)$, we have natural isomorphisms:
    $$F(A\ast B)\simeq FA\OX FB\text{ and }F(A\OX B)\simeq (FA\ast FB)\Td.$$
\end{lemma}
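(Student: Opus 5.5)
The plan is to follow the classical proof for vector bundles verbatim, now in the derived setting. First I prove $F(A\ast B)\simeq FA\OX FB$ directly; then I deduce the second formula from it using Involutivity (Theorem \ref{thm_involutivity}).

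For the first formula, write $A\ast B=\mathrm{add}_!(A\boxtimes B)$, where $\mathrm{add}:E\X_XE\rightarrow E$ is the addition map and $\boxtimes$ is the external product over $X$. The map $\mathrm{add}$ is a map of derived vector bundles over $X$, from $E\oplus E$ to $E$, and its dual is the diagonal $\Delta':E'\rightarrow E'\X_XE'$. Functoriality (1) (Lemma \ref{lem_F_functoriality_d}) is a formal consequence of proper base change and the projection formula, so it holds for derived bundles by Remark \ref{rmk_base_func_still_hold}. It gives
$$F(A\ast B)=F\,\mathrm{add}_!(A\boxtimes B)\simeq\Delta'^*F_{E\oplus E}(A\boxtimes B).$$
It then remains to show $F_{E\oplus E}(A\boxtimes B)\simeq FA\boxtimes FB$. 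The pairing on $(E\oplus E)\X_X(E'\oplus E')$ is the sum of the two pairings, so the additivity of $\CL$ (Proposition \ref{prop_L_additive}) identifies the pulled-back kernel with the external product of the two kernels. After pulling back along $p$, the Künneth formula (Lemma \ref{lem_kunneth}) for $p'_!$ over $X$ finishes this step. Pulling back along $\Delta'$ then turns $\boxtimes$ into $\OX$.

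For the second formula, apply the first formula to $FA$ and $FB$ on $E'$, with the roles of $E$ and $E'$ interchanged:
$$F'(FA\ast FB)\simeq F'FA\OX F'FB\simeq [-1]^*A\OX[-1]^*B\,\langle -2d\rangle\simeq [-1]^*(A\OX B)\langle -2d\rangle,$$
where the middle step uses $\eta_E$. Now apply $F$ and use $\eta_{E'}$, i.e. $FF'\simeq[-1]^*\Tmd$. The left side becomes $[-1]^*(FA\ast FB)\Tmd$. The right side becomes $F[-1]^*(A\OX B)\langle-2d\rangle$, and I commute $[-1]^*$ past $F$ using that $[-1]$ is a linear automorphism with dual $[-1]$ (Functoriality (1) again, or simply invariance of the pairing). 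Cancelling $[-1]^*$ and the twists gives $F(A\OX B)\simeq(FA\ast FB)\Td$.

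I do not expect a genuine obstacle. The only point needing care is that every input is valid for derived vector bundles: the Künneth formula over derived Artin stacks, additivity of $\CL$ pulled back along the derived pairing (the pairing of a direct sum is the sum of the pairings at the level of $\CE\OX\CE'\rightarrow\CO_X$), and Theorem \ref{thm_involutivity} in place of Proposition \ref{prop_involutivity}. Since the isomorphisms are only asserted to be natural, the sign compatibilities of the involutivity datum do not need to be tracked.
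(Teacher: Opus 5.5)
Your proposal is correct and follows the paper's proof. The paper also obtains the first isomorphism from Functoriality (1) for the addition map (whose dual is the diagonal), the additivity of $\CL$ and the Künneth formula, and the second by applying the first on $E'$ and invoking Involutivity; your checks that these inputs survive in the derived setting, and that $F$ commutes with $[-1]^*$, are exactly the \emph{mutatis mutandis} the paper leaves implicit.
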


\begin{lemma}[Plancherel]
    For $A,B\in D(E)$, we have a natural isomorphism:
    $$\pi_{E'!}(FA\OX FB)\simeq\pi_{E!}(A\OX[-1]^*B)\Tmd.$$
\end{lemma}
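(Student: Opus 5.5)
The plan follows the classical proof of the Plancherel lemma in §\ref{sec_fourier_on_vect} verbatim. The only inputs are the derived Convolution lemma just stated, Functoriality (Lemma \ref{lem_F_functoriality_d}), and base change/projection formula for the additive structure of $E$.

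First, apply the derived Convolution lemma to rewrite $\pi_{E'!}(FA\OX FB)\simeq\pi_{E'!}F(A\ast B)$. Here $A\ast B=\mathrm{add}_!(A\boxtimes_X B)$, where $\mathrm{add}:E\X_XE\rightarrow E$ is the addition map.

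Second, identify $\pi_{E'!}F$. Apply Functoriality (2) to the map of derived vector bundles $\alpha=\iota_E: 0\rightarrow E$, the zero bundle over $X$. Its dual is $\alpha'=\pi_{E'}:E'\rightarrow X$, viewed as the map from $E'$ to the zero bundle. On the zero bundle of rank $0$, the Fourier transform is the identity of $D(X)$: the pairing is zero and the kernel is $\CL|_0\simeq\1$ by Remark \ref{rmk_stalk_1}. So Functoriality (2) reads $\pi_{E'!}F\Td\simeq\iota_E^*$, that is, $\pi_{E'!}F\simeq\iota_E^*\Tmd$. This needs only Lemma \ref{lem_F_functoriality_d}, whose derived version in turn relies on the involutivity datum of Theorem \ref{thm_involutivity}. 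That dependence is the only substantive input, and it is already available. We therefore obtain $\pi_{E'!}(FA\OX FB)\simeq\iota_E^*(A\ast B)\Tmd$.

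Third, compute $\iota_E^*\mathrm{add}_!(A\boxtimes_X B)$ by proper base change along the Cartesian square
$$E\xrightarrow{(\id,[-1])}E\X_XE\xrightarrow{\mathrm{add}}E,$$
whose fibre over the zero section is the anti-diagonal. Since $\mathrm{add}^{-1}(0_E)\simeq E$ via $v\mapsto(v,-v)$, also in the derived sense (it is the fibre of a split surjection of perfect complexes), base change gives
$$\iota_E^*\mathrm{add}_!(A\boxtimes_X B)\simeq\pi_{E!}(\id,[-1])^*(A\boxtimes_XB)\simeq\pi_{E!}(A\OX[-1]^*B).$$
Combining the three steps yields the claimed natural isomorphism.

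The main point to double-check is the second step. One must confirm that the Fourier transform on the rank-zero bundle is canonically the identity, so that no extra twist appears, and that the Tate twists combine to exactly $\Tmd$: the twist $\langle\td\rangle$ of Functoriality (2) vanishes here, since $\td=0$. Every remaining step is a formal consequence of proper base change, exactly as in the classical case.
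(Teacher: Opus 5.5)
Your proposal is correct and is the paper's proof: Convolution gives $\pi_{E'!}F(A\ast B)$, Functoriality (2) applied to $\iota_E$ gives $\pi_{E'!}F\simeq\iota_E^*\Tmd$, and proper base change along the anti-diagonal identifies $\iota_E^*(A\ast B)$ with $\pi_{E!}(A\OX[-1]^*B)$. You also make explicit two points the paper leaves implicit: that the Fourier transform on the rank-zero bundle is the identity, and that the derived fibre of $\mathrm{add}$ over the zero section is the anti-diagonal.
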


In the derived context, the category of monodromic motives $D_{\mathrm{mon}}$ and the homogeneous Fourier transform $F_1$ are defined in the same way as in the end of §\ref{sec_fourier_on_vect}. The (derived) homogeneous Fourier transform has been studied in detail in \cite{khan_homogeneous} and \cite[\nopp §8]{FK}.

\begin{lemma}[Monodromicity]\label{lem_monodromicity_d}
    The Fourier transform restricts to an equivalence $D_{\mathrm{mon}}(E)\isoto D_{\mathrm{mon}}(E')$.
\end{lemma}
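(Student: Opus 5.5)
The plan is to follow the classical proof of Lemma \ref{lem_monodromicity}, now using the derived Involutivity of Theorem \ref{thm_involutivity}.

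\emph{Reduction to preservation.} By Theorem \ref{thm_involutivity}, $F'\circ F\simeq [-1]^*\Tmd$ and $F\circ F'\simeq[-1]^*\Tmd$. Both $[-1]^*$ and $\Tmd$ preserve $D_{\mathrm{mon}}$. For $[-1]^*$ this holds because $[-1]$ commutes with every weight-$n$ homothety action, so it descends to an automorphism $\overline{[-1]}$ of $E/\Gm(n)$ with $[-1]^*\rho_n^*\simeq\rho_n^*\overline{[-1]}^*$. The Tate twist commutes with $\rho_n^*$ outright. It therefore suffices to show that $F$ (and, symmetrically, $F'$) sends $D_{\mathrm{mon}}(E)$ into $D_{\mathrm{mon}}(E')$. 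Since $F$ is exact and $D_{\mathrm{mon}}(E')$ is thick, it is enough to check generators: objects $\rho_n^*A$ with $A\in D(E/\Gm(n))$ and $n\neq 0$.

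\emph{Descending the transform.} Form the same diagram as in the classical case. Here $\Gm(n)$ acts on $E\X_XE'$ with weight $n$ on $E$ and weight $-n$ on $E'$; this makes sense for derived vector bundles, since the weight-$n$ action on $\CE$ induces the weight-$(-n)$ action on $\CE'$. The pairing $m$ is $\Gm(n)$-invariant, because it comes from the canonical map $\CE\OX\CE'\rightarrow\CO_X$, which is equivariant for these opposite weights. Hence $m$ factors as $\bar m:\frac{E\X_XE'}{\Gm(n)}\rightarrow\AAA^1$. The squares involving $p,\rho_n,\bar p$ and $p',\rho'_n,\bar p'$ are Cartesian. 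This is the only point where derivedness enters, and it holds because quotients by the free $\Gm(n)$-action commute with the relevant fibre products. We then compute
\[
p'_!(p^*\rho_n^*A\OX m^*\CL)\simeq p'_!\bigl(\rho^*(\bar p^*A\OX\bar m^*\CL)\bigr)\simeq\rho_n'^*\,\bar p'_!(\bar p^*A\OX\bar m^*\CL),
\]
where $\rho:E\X_XE'\rightarrow\frac{E\X_XE'}{\Gm(n)}$ is the quotient map. The second isomorphism is proper base change along the Cartesian square with $\rho'_n$. This base change is valid for $!$-pushforward along arbitrary maps locally of finite type of derived Artin stacks, by the six-functor formalism of \cite[Appendix]{khan_virtual}. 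The right-hand side lies in $\rho_n'^*D(E'/\Gm(n))$, so $FA$ is monodromic.

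\emph{Conclusion and main obstacle.} Running the same argument for $F'$, the restrictions of $F$ and $F'[-1]^*\Td$ to monodromic objects are mutually inverse, which gives the equivalence. I expect the only real obstacle to be the equivariance bookkeeping in the derived setting. One must verify that $m$ genuinely descends to $\bar m$ as a map of derived stacks, including its homotopy-coherent invariance and not merely the classical one. The cleanest route is to work at the level of perfect complexes: the total-space functor $\mathrm{Tot}$ preserves Cartesian squares, and the equivariant structure on $\CE\OX\CE'\rightarrow\CO_X$ is canonical. This handles the descent without choosing presentations.
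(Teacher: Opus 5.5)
Your proposal is correct and follows the paper's route. Like the paper, you reduce to preservation of monodromic objects via Involutivity, then use the Cartesian squares of the quotient diagram and proper base change to obtain $F\rho_n^*A\simeq\rho_n'^*\bar p'_!(\bar p^*A\OX\bar m^*\CL)$.

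One small correction: the $\Gm(n)$-action on $E\X_XE'$ is not free, since the zero section is fixed and there are $\mu_n$-stabilisers. The squares are Cartesian anyway, because for any equivariant map $Y\rightarrow Z$ one has $Y\simeq [Y/\Gm]\X_{[Z/\Gm]}Z$.
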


\begin{lemma}[Compatibility with the homogeneous Fourier transform]
    We have a commutative diagram
\[\begin{tikzcd}
	{D(E)} & {D(E')} \\
	{D(E/\Gm)} & {D(E'/\Gm)}
	\arrow["F", from=1-1, to=1-2]
	\arrow["{\rho_1^*}", from=2-1, to=1-1]
	\arrow["{F_1}"', from=2-1, to=2-2]
	\arrow["{\rho'^*_1}"', from=2-2, to=1-2]
\end{tikzcd}\]
where $\rho_1$ (resp. $\rho_1'$) is the quotient map $E\rightarrow E/\Gm$ (resp. $E'\rightarrow E'/\Gm$).
\end{lemma}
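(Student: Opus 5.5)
We follow the proof of the classical case at the end of §\ref{sec_fourier_on_vect} essentially verbatim; only one step needs rechecking in the derived setting. Let $A\in D(E/\Gm)$ and use the diagram preceding Lemma \ref{lem_monodromicity} with $n=1$. It makes sense for derived vector bundles: $\Gm$ acts on $E\X_XE'$ with weight $1$ on $E$ and weight $-1$ on $E'$, and the pairing $m$ is $\Gm$-invariant, so it descends to $\bar m$. The squares in that diagram are Cartesian in derived Artin stacks, and the lower curved square over $\AAA^1/\Gm$ is Cartesian as well.

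Proper base change and the projection formula give
$$p'_!(p^*\rho_1^*A\otimes m^*\CL)\simeq \rho'^*_1\bar p'_!(\bar p^*A\otimes\bar m^*\CL)\simeq\rho'^*_1\pi'_!(\pi^*A\otimes\mu^*q_{1!}\CL).$$
These isomorphisms use only the six-functor formalism on derived Artin stacks, exactly as in Remark \ref{rmk_base_func_still_hold}. The comparison therefore reduces to an isomorphism $q_{1!}\CL\simeq\beta_*\1[-1]$ in $D(\AAA^1/\Gm)$.

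This last isomorphism is a statement purely about $\AAA^1$ and $\AAA^1/\Gm$ over $k$, with no derived structure involved. The argument already given in the classical lemma applies without change. It uses the localization triangle for $\alpha: B\Gm\hookrightarrow\AAA^1/\Gm$ and $\beta$, proper base change together with Lemma \ref{lem_coho_L_zero} to identify $\beta^*q_!\CL\simeq\1[-1]$, and the Contraction Principle to show that $\alpha^!q_!\CL\simeq r_!q_!\CL=0$. The only point to verify is that the objects and maps there ($q$ smooth surjective, both squares with $r$ Cartesian, $q^!\simeq q^*\langle1\rangle$) are the same as before; they are, since nothing depends on $E$.

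The main, though mild, obstacle is to confirm that the quotient-stack manipulations in the first step remain valid for a general derived $E$. Concretely, one must check that $\frac{E\X_XE'}{\Gm}\to E/\Gm\X_XE'/\Gm$ is again a $\Gm$-quotient map fitting into Cartesian squares, and that $!$-pushforward along these possibly non-representable maps of derived Artin stacks satisfies base change and the projection formula. Both hold in the formalism of \cite{khan_virtual}, since $\mathrm{Tot}$ preserves Cartesian squares and the $\Gm$-actions are compatible with them.
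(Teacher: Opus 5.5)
Your proposal is correct and follows the paper's route: the derived case is proved by running the classical argument verbatim. Proper base change and the projection formula along the Cartesian squares reduce the statement to $q_{1!}\CL\simeq\beta_*\1[-1]$ on $\AAA^1/\Gm$, which is then established by the localization triangle, Lemma \ref{lem_coho_L_zero}, and Laumon's contraction argument. Your observation that this last isomorphism involves no derived structure is exactly why the paper can say the classical discussion applies \emph{mutatis mutandis}.
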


We have two more compatibilities, proved \textit{mutatis mutandis} as in \cite[\nopp §A.4]{FYZ} and \cite[\nopp §6.4]{FYZ}.

\begin{lemma}[Compatibility with proper base change]
    Given a Cartesian square of derived vector bundles over $X$ and its dual:
\[\begin{tikzcd}[row sep=0.8cm, column sep=0.8cm]
	& D &&& {D'} & \\
	A && C & {A'} && {C'} \\
	& B &&& {B'}
	\arrow["c"', from=1-2, to=2-1]
	\arrow["d", from=1-2, to=2-3]
	\arrow["\square"{marking, allow upside down}, draw=none, from=2-1, to=2-3]
	\arrow["a"', from=2-1, to=3-2]
	\arrow["b", from=2-3, to=3-2]
	\arrow["{c'}", from=2-4, to=1-5]
	\arrow["{d'}"', from=2-6, to=1-5]
	\arrow["\square"{marking, allow upside down}, draw=none, from=2-6, to=2-4]
	\arrow["{a'}", from=3-5, to=2-4]
	\arrow["{b'}"', from=3-5, to=2-6]
\end{tikzcd}\]
Assume $a$ and $b$ admit presentations, then we have a commutative diagram
\[\begin{tikzcd}
	{F_Cb^*a_!} & {F_Cd_!c^*} \\
	{b'_!a'^*F_A\langle d_B-d_C\rangle} & {d'^*c'_!F_A\langle d_A-d_D\rangle}
	\arrow["\simeq"{description}, draw=none, from=1-1, to=1-2]
	\arrow["\simeqd"{description}, draw=none, from=1-1, to=2-1]
	\arrow["\simeqd"{description}, draw=none, from=1-2, to=2-2]
	\arrow["\simeq"{description}, draw=none, from=2-1, to=2-2]
\end{tikzcd}\]
where the vertical isomorphisms are from Functoriality, and the horizontal ones are from proper base change.
\end{lemma}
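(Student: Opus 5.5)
The plan is to follow the strategy of \cite[\nopp §A.4]{FYZ}: first make the four isomorphisms in the square explicit, then use the presentations of $a$ and $b$ to reduce to the two cases treated in Lemma \ref{lem_base_change_smooth}, where Functoriality (2) has a description that does not involve involutivity.

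\textbf{Unwinding the vertical maps.} The dual of the Cartesian square $D=A\X_BC$ is again Cartesian: dualising perfect complexes sends cocartesian squares to cartesian ones, and squares in a stable category are cartesian exactly when cocartesian. Hence the bottom horizontal isomorphism makes sense. The left vertical isomorphism factors as
$$F_Cb^*a_!\simeq b'_!F_Ba_!\langle d_B-d_C\rangle\simeq b'_!a'^*F_A\langle d_B-d_C\rangle,$$
using Functoriality (2) for $b$ and then Functoriality (1) for $a$. The right vertical isomorphism factors the same way, using (1) for $d$ and (2) for $c$. Functoriality (1) is a formal consequence of proper base change and the projection formula (Remark \ref{rmk_base_func_still_hold}). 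The part of the square built only from type (1) isomorphisms and proper base change isomorphisms therefore commutes by the standard coherence of the six-functor formalism. This is a diagram chase on kernels over $A\X_XC'$ and the relevant fibre products, and I expect it to be routine.

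\textbf{Reducing to smooth maps and closed immersions.} The real content is that the type (2) isomorphisms, which were obtained from (1) by conjugating with the involutivity data $\eta$, are compatible with base change. I will use the presentations to factor $a$ and $b$, via the Cartesian square $E^{\geq0}\to E^0$, $E\to E^{\leq0}$ of §\ref{subsec_dVect}, into a closed immersion followed by a smooth map. Since the statement is functorial in composites of Cartesian squares, it then suffices to treat separately the cases where $b$ is smooth and where $b$ is a closed immersion. Pasting requires that Functoriality (2) be compatible with composition, $(\beta\alpha)'_!\simeq\alpha'_!\beta'_!$. I will derive this from the corresponding statement for (1) together with Lemma \ref{lem_BC_d}, since the isomorphism (2$'$) is the Beck–Chevalley mate of (1) and mates compose.

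\textbf{The two basic cases.} When $b$ is smooth or a closed immersion, I will first check that the isomorphism of Lemma \ref{lem_base_change_smooth} agrees with Functoriality (2). To do this I will compare both on objects of the form $\iota_!\1$, using the $\alpha_E$-presentation of involutivity data from Remark \ref{rmk_inv_datum_basics}.2 and the construction of the data in Lemma \ref{lem_small_open_inv_datum}, where the datum on $E$ is restricted from $E^0$ via $p^*i_!$. The description in Lemma \ref{lem_base_change_smooth} is written entirely in terms of unit and counit maps for $q$ and $i$ together with proper base change. For this description, the required square follows from the compatibility of the relevant Beck–Chevalley transformations with proper base change, that is, from a pasting of Cartesian cubes built on $A\X_XC'$, $D\X_XA'$ and so on.

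\textbf{Main obstacle.} I expect the main obstacle to be identifying Functoriality (2), which is defined through $\eta$, with this explicit description. If this proves awkward, I will fall back on the scalar argument used repeatedly in the paper. All four corners are integral transforms, so the two composite isomorphisms differ by an automorphism of the kernel. This automorphism is a natural endomorphism built from maps $\1\to\1$ on connected pieces, and by Lemma \ref{lem_hom_of_unit_conn_comp} it is a locally constant scalar. It is compatible with base change (Lemma \ref{lem_inv_base_change} and Lemma \ref{lem_small_open_inv_datum}.2), so it can be computed after base change to a geometric point. There the presentations split and everything reduces to the classical case and the explicit data of Example \ref{ex_involutivity_datum_examples}.2, where the scalar can be read off directly.
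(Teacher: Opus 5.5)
The paper gives no argument of its own here beyond deferring to \cite[\nopp \S A.4]{FYZ}. Your reductions are sound as far as they go. The dual square is Cartesian, and the twists agree because $d_D=d_A+d_C-d_B$. The vertical isomorphisms factor as you say. Compatibility of (2) with composition follows because mates of pasted squares are pasted mates; Lemma \ref{lem_BC_d} is not the relevant input. Finally, $b$, and with it its base change $c$, factors as a closed immersion followed by a smooth map via the naive truncation $\sigma^{\geq1}\CK_b\to\CK_b\to\sigma^{\leq0}\CK_b$ of a presented fibre $\CK_b$ of $b$. The square of \S\ref{subsec_dVect} concerns a single bundle and does not factor a map.

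After these reductions, the entire non-formal content of the lemma remains. For $b$ smooth or a closed immersion, you must show that the isomorphisms (2) for $b$ and for $c$, which are defined through the canonical involutivity data of Theorem \ref{thm_involutivity}, agree with the explicit isomorphisms of Lemma \ref{lem_base_change_smooth}, or at least differ from them by the same scalar. You do not prove this, and comparing the two on objects $\iota_!\1$ does not suffice. For involutivity data that comparison is decisive only because the kernel of $F'F$ is $\mathrm{add}^*$ of the single object $F'\1_{E'}$ (Remark \ref{rmk_inv_datum_basics}.2, \cite[Lemma A.2.4]{FYZ}). You supply no analogous rigidity for natural isomorphisms $\alpha'_!F\Td\simeq\tF\alpha^*\Tdt$, so agreement on one object does not yield agreement of the natural transformations.

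Your fallback does not close this gap either. Lemma \ref{lem_hom_of_unit_conn_comp} computes endomorphisms of $\1$. The kernel on $A\X_XC'$ of the functors in the square is instead $(c\X\id_{C'})_!$ of an invertible object on $D\X_XC'$, which is not a unit object. To conclude that the discrepancy is a locally constant scalar, you must show two things. First, every isomorphism in the square is induced by an isomorphism of kernels. Second, this kernel has only scalar endomorphisms, for instance by identifying it, up to twist, with the $*$-extension of an invertible object along a closed immersion. For $b$ a closed immersion this is immediate. For $b$ smooth, the kernel is supported on $A\X_XB'$ embedded via $\id_A\X b'$, by the argument in Lemma \ref{lem_base_change_smooth}.

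Granting this, the value of the scalar at a geometric point is exactly what has to be computed. Saying ``everything reduces to the classical case'' is not a reduction to anything established. No such compatibility is proved for classical bundles in \S\ref{sec_fourier_on_vect}, and over a point the square is in general still derived. You would have to:
\begin{enumerate}
\item split the square into elementary summands;
\item check that the canonical data, hence (2), are compatible with the resulting box-product decompositions (Lemma \ref{lem_small_open_inv_datum}.3, Example \ref{ex_involutivity_datum_examples}.2);
\item carry out the sign bookkeeping with $\eta_{E^i}=(-1)^{d^i}$ for each summand.
\end{enumerate}
As written, the step that makes this lemma non-trivial is asserted rather than proved.
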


\begin{lemma}[Compatibility with the Gysin map]
    Let $\alpha: \tE\rightarrow E$ be a quasi-smooth map of derived vector bundles over $X$. Assume $\alpha$ admits a presentation. Then we have a commutative diagram
\[\begin{tikzcd}
	{\alpha'_!F\Td} & {\alpha'_*F\Td} \\
	{\tF\alpha^*\Tdt} & {\tF\alpha^!\Td}
	\arrow["{\mathrm{can}(\alpha')}", from=1-1, to=1-2]
	\arrow["\simeqd"{description}, draw=none, from=1-1, to=2-1]
	\arrow["\simeqd"{description}, draw=none, from=1-2, to=2-2]
	\arrow["{[\alpha]}"', from=2-1, to=2-2]
\end{tikzcd}\]
where $\mathrm{can}(\alpha'): \alpha'_!\rightarrow\alpha'_*$ is the forget-support map,  $[\alpha]: \alpha^*\rightarrow\alpha^!\langle -\dim\alpha\rangle$ is the Gysin map, and the vertical isomorphisms are from Functoriality.
\end{lemma}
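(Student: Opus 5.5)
The plan is to follow \cite[\nopp §6.4]{FYZ}: reduce, by smooth base change and the explicit presentation of $\alpha$, to two model cases, namely $\alpha$ smooth and $\alpha$ a closed immersion of classical type. In each case we check the square directly from how the vertical isomorphisms of Functoriality (2) and (4) are built.

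First I make the vertical arrows explicit. By the proof of Lemma \ref{lem_F_functoriality}, carried over to Lemma \ref{lem_F_functoriality_d}, the isomorphism (2) is obtained from (1) by conjugating with Involutivity, and (4) is the right adjoint of (2) applied to $\alpha'$. By Lemma \ref{lem_BC_d} these isomorphisms are compatible with units and counits, i.e.\ they are Beck--Chevalley transformations. Both horizontal maps are also natural under smooth base change on $X$: for $\mathrm{can}$ this is standard, for the Gysin map it follows from its construction via deformation to the normal cone, and for the Fourier isomorphisms it is Lemma \ref{lem_F_base_change_d} together with the base-change compatibility of the involutivity datum (Lemma \ref{lem_small_open_inv_datum}(2)). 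Since $D(-)$ is a sheaf for the smooth topology, and the compatibility of units and counits reduces commutativity of the square of natural transformations to checking after pullback to a smooth cover, we may assume the chosen presentation of $\alpha$ is global on $X$.

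Next I factor $\alpha$ using the presentation. Write the induced map of complexes and factor $\alpha=\beta\circ\gamma$, where $\gamma$ is a closed immersion and $\beta$ is smooth, both of derived vector bundles; this is the mapping-cylinder factorisation, using the Cartesian square at the start of §\ref{subsec_dVect}. The Gysin map is multiplicative, $[\beta\gamma]=[\gamma]\circ\gamma^*[\beta]$ up to twist. The forget-support maps are likewise multiplicative, since $\alpha'=\gamma'\beta'$. The Functoriality isomorphisms compose, as one sees from their construction out of (1), which is compatible with composition by proper base change. So it suffices to treat $\beta$ and $\gamma$ separately.

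\textbf{Smooth case.} Take $\alpha$ smooth, so $\alpha'$ is a closed immersion. Then $\mathrm{can}(\alpha')$ is an isomorphism, and $[\alpha]$ is the purity isomorphism $\alpha^*\simeq\alpha^!\langle-\dim\alpha\rangle$. Lemma \ref{lem_base_change_smooth} gives (2) directly, by an explicit chain through $q_!q^*$ and the unit. The task is to check that the (4) obtained by adjunction equals the composite of (2) with purity. This should follow from Lemma \ref{lem_bootstrap} and from the fact that purity for $q$ is compatible with the counit $q_!q^!\to\id$ used there.

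\textbf{Closed immersion case.} Take $\alpha$ a closed immersion, so $\alpha'$ is smooth. I use deformation to the normal bundle $N$ of $\alpha$. The Gysin map is defined by specialising $\alpha^*\to\alpha^!$ to the zero section of the normal bundle. Meanwhile $F$ is defined uniformly over the deformation space over $\AAA^1$, so Base change (2) and (3) let us transport the square from the special fibre to the generic fibre. On the special fibre the map is the inclusion of a sub-bundle into a split bundle, and the Künneth formula plus box-product compatibility of $F$ reduce it to the zero section $X\to N$. There both sides are computed from the isomorphism $\iota_*\1\Tmd\simeq\pr_!m^*\CL$ in the proof of Proposition \ref{prop_involutivity}.

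The main obstacle is this last step: showing the specialisation is compatible with the Fourier isomorphisms, and that the resulting scalar is $1$ rather than a sign. To handle the scalar I would use Remark \ref{rmk_inv_datum_basics}.3 to reduce to checking on $\1$ at a geometric point, and then compute as in Remark \ref{rmk_sign_classical}.
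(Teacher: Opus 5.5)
The paper gives no argument of its own for this lemma; it defers to \S6.4 of \cite{FYZ}. So everything rests on your outline, and the outline leaves open exactly the two model cases where the content lies.

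In the closed-immersion case you yourself flag the compatibility of the specialisation to the normal bundle with the Fourier isomorphisms as the main obstacle, and nothing in the proposal resolves it. Base change (2) and (3) applied to the fibres of the deformation space do not supply it. The Gysin map is defined through the specialisation map, which uses the localisation triangle $i_{0*}i_0^!\to\id\to j_*j^*$ for $X\X 0\subset X\X\AAA^1\supset X\X\Gm$ on the deformation space, together with its boundary map and homotopy invariance. You therefore need the Functoriality isomorphisms for the whole family over $X\X\AAA^1$ to be compatible with all of these at once. Two further points are missing in this case:
\begin{itemize}
\item You need the deformation space to be a family of derived vector bundles containing $\tE\X\AAA^1$ linearly. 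This holds here because $\gamma$ is a base change of the zero section of a classical bundle, but it has to be said.
\item The passage from $\tE\to\tE\X_XN$ to $X\to N$ is not a Künneth statement about natural transformations; it needs a partial-Fourier argument.
\end{itemize}
In the smooth case you identify the Functoriality isomorphism (2) with the explicit chain of Lemma \ref{lem_base_change_smooth}. But (2) is \emph{defined} in the derived setting by conjugating (1) with the glued involutivity data. These are different constructions whose agreement has to be proved, and the comparison of (4) with purity is only asserted.

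The tool you propose for the final scalar does not apply either. Remark \ref{rmk_inv_datum_basics}.3 shows that an \emph{involutivity datum} is unique up to $L^\times$, because it is encoded by a map $\1_X\to\CK$ with $\CK$ locally isomorphic to $\1_X$. It says nothing about natural transformations $\tF\alpha^*\Tdt\to\tF\alpha^!\Td$. To reduce the square to a scalar you need a rigidity statement, for instance that both composites are $D(E)$-linear and hence determined by a class in $\Hom(\1_{\tE},\alpha^!\1_E\langle-\dim\alpha\rangle)$. For the Gysin map this linearity holds, since it is cap product with the fundamental class. On the Fourier side it is a compatibility of $\mathrm{can}(\alpha')$ with convolution on $E'$, which must be proved.

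Moreover, this Hom is $L$ only in special cases, such as the zero section of a classical bundle over a connected $X$. For a general quasi-smooth $\alpha$ it can be a group like $\Hom(\1_X,\1_X\langle 1\rangle)$, which vanishes at geometric points. So a check at a geometric point is meaningful only after the specialisation step has been done. Even then the sign must actually be computed. For the zero section $\iota$ of a classical rank-$d$ bundle $V$, on objects $\iota_*B$ the two rows are governed by the Euler classes of $V'$ and of $V$, which differ by $(-1)^d$. So the $(-1)^d$ built into Definition \ref{def_inv_datum} has to be tracked through (2) and (4).

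Two smaller points:
\begin{itemize}
\item The reduction to a global presentation is unnecessary, since that is part of the hypothesis. As you state it, it is also false: equality of maps in $D(-)$ is not detected on a smooth cover without extra input, such as the coconnectivity used in Theorem \ref{thm_involutivity}.
\item The mapping cylinder is quasi-isomorphic to $\CE$, so it does not factor $\alpha$ nontrivially. The factorisation you want comes from a presentation of $\mathrm{cofib}(\widetilde{\CE}\to\CE)$ in degrees $\le 0$, which exists by quasi-smoothness, and its degree-$0$ subcomplex.
\end{itemize}
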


Finally, we show the Fourier transform preserves constructible objects $D_c(E)\subseteq D(E)$. See Conventions for basic facts on this notion. 

\begin{lemma}[Constructibility]
    The Fourier transform restricts to an equivalence: $D_{\mathrm{c}}(E)\isoto D_{\mathrm{c}}(E')$. 
\end{lemma}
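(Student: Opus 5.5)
By Involutivity (Theorem \ref{thm_involutivity}), $F'F\simeq[-1]^*\Tmd$ and $FF'\simeq[-1]^*\Tmd$. The functors $[-1]^*$ and $\Tmd$ are autoequivalences that preserve $D_c$. So it suffices to show that $F$ sends $D_c(E)$ into $D_c(E')$, and symmetrically that $F'$ sends $D_c(E')$ into $D_c(E)$; the restrictions are then mutually quasi-inverse up to these twists. In the classical case this was immediate because every functor in $p'_!(p^*(-)\OX m^*\CL)$ preserves constructibility. Here the difficulty is that $p'$ need not be representable, and by Conventions (3) $p'_!$ preserves $D_c$ a priori only for representable maps.

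First I reduce to the case where $X$ is a derived scheme and $E$ has a presentation. Constructibility is tested after pullback along a smooth surjection $f:U\to X$ from a derived scheme, and we may choose $U$ small, so that $\CE|_U$ has a presentation. By Base change (Lemma \ref{lem_F_base_change_d}.2), $f^*FA\simeq\tF f^*A$, and $f^*A$ is constructible whenever $A$ is. The question therefore becomes: $\tF$ preserves $D_c$ on $U$.

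Now fix a presentation with the Cartesian square $E^{\geq0}\to E^0$, $E\xrightarrow{i}E^{\leq0}$, $p:E^0\to E^{\leq0}$, and use the embedding $p^*i_!:D(E)\hookrightarrow D(E^0)$ from the proof of Lemma \ref{lem_small_open_inv_datum}. The closed immersion $i$ and smooth map $p$ satisfy Lemma \ref{lem_base_change_smooth}. Applying it, as in that proof, gives the dual relation $(p')^*(i')_!F A\simeq F_0\,p^*i_!A$ up to twists, where $F_0$ is the Fourier transform on $E^0$ and $p',i'$ are the corresponding maps for $E'$ (note $(E')^0=(E^0)'$). The bundle $E^0$ is a classical vector bundle, so $F_0$ preserves $D_c$ by the classical argument, with all maps representable over the derived scheme $U$.

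The main obstacle is to deduce constructibility of $FA$ from constructibility of $(p')^*(i')_!FA$. Here $p'$ is smooth surjective (Lemma \ref{lem_bootstrap}), so constructibility can be checked after $p'^*$. For the closed immersion $i'$, $i'_!=i'_*$ and $i'^*i'_*\simeq\id$, and $i'^*$ preserves $D_c$; so $B$ is constructible as soon as $i'_!B$ is. Combining these yields $FA\in D_c(E')$.

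I would double-check that the smooth-local definition of $D_c$ on the possibly stacky $E^{\leq0}$ and $(E')^{\leq 0}$ makes these steps legitimate. An alternative route avoiding presentations is to reduce to geometric points via Remark \ref{rmk_conservativity}, but that gives conservativity and not constructibility, so I prefer the presentation argument.
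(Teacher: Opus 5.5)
Your proof is correct. It is the paper's argument carried one reduction step further, not a genuinely different route.

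The paper, after reducing to the case where $\CE$ has a presentation, argues as follows:
\begin{itemize}
\item It uses the smooth surjection $q\colon E^{\geq0}\to E$, whose dual $q'\colon E'\to (E^{\geq0})'=(E')^{\leq0}$ is a closed immersion.
\item Functoriality gives $q'_*FA\simeq F_{\geq0}q^!A$.
\item $F_{\geq0}$ preserves $D_c$ directly, because for tor-amplitude in $[0,+\infty)$ the projection $p'$ is representable.
\item It concludes with $FA\simeq q'^*q'_*FA$.
\end{itemize}

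Your maps are the same ones. Your $i'$ is exactly this $q'$. Your $p'\colon (E^0)'\to (E')^{\leq0}$ is the dual of the closed immersion $j\colon E^{\geq0}\to E^0$. You then go one level further:
$$p'^*i'_!FA=j'^*q'_!FA\simeq F_0\,j_!q^*A\simeq F_0\,p^*i_!A \quad\text{(up to twists)}.$$
This uses Lemma \ref{lem_base_change_smooth} for the smooth map $q$, Functoriality (1) for $j$, and base change $j_!q^*\simeq p^*i_!$.

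What this buys you is that the base case is the classical bundle $E^0$ rather than the tor-amplitude $[0,+\infty)$ bundle $E^{\geq0}$. The price is an extra descent along the smooth surjection $p'$. That descent is legitimate, since $(E^0)'$ is a derived scheme once $X=U$ is, which settles the doubt in your last paragraph. The paper's version is one step shorter.

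When you write it up, say explicitly that $p',i'$ are the presentation maps of $E'$, not the duals of $p,i$. Also include the short derivation of the identity above. As it stands, ``applying Lemma \ref{lem_base_change_smooth} as in that proof'' leaves unclear which maps go in which direction.
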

\begin{proof}
    By Involutivity, it suffices to show that constructibility is preserved by $F$. We first prove the case when $E$ has tor-amplitude in $[0,\infty)$. Let $A\in D_c(E)$, then $FA=p'_!(p^*A\otimes m^*\CL)$ is clearly constructible, noting that $p'$ is representable.\\\\
    We now prove the general case. The question being smooth local on $X$ (this uses that $F$ commutes with base change), we may assume $E$ has a presentation. As before, denote by $E^{\ge0}$ the total space associated to $(\CE^0\rightarrow\CE^1\rightarrow\cdots)$. Let $q: E^{\ge0}\rightarrow E$ be the canonical map, it is a smooth surjection and $q^*: D(E)\rightarrow D(E^{\ge0})$ is fully faithful (Lemma \ref{lem_bootstrap}), the dual map $q'$ is a closed immersion. Let $A\in D_c(E)$. As $q'$ is a closed immersion, $FA$ is constructible if and only if $q'_*FA$ is (because closed immersions are representable by definition, and $q'^*q'_*\isoto\id$). By Functoriality, $q'_*FA\simeq F_{\ge 0}q^!A$, where $F_{\ge 0}$ is the Fourier transform on $E^{\ge0}$. This completes the proof since $F_{\ge 0}q^!A$ is constructible by the first case above. 
\end{proof}

\printbibliography[heading=bibintoc, title={References}]
\Addresses
\end{document}